\DeclareRobustCommand{\SkipTocEntry}[5]{}
\newcommand{\ds}{\displaystyle}
\newcommand{\cii}[1]{_{ {}_{ #1}}}
\newcommand{\eq}[2][label]{\begin{equation}\label{#1}#2\end{equation}}
\newcommand{\av}[2]{\langle #1\rangle\cii {#2}}
\newcommand{\df}{\buildrel\rm{def}\over=}
\newcommand{\BMO}{{\rm BMO}}
\newcommand{\const}{{\rm const}}
\newcommand{\D}{\mathcal{D}}
\newcommand{\Bel}{\mathbf B}
\newcommand{\bel}{\mathbf b}
\newcommand{\ve}{\varepsilon}
\def\grad{\operatorname{grad}}
\def\Ker{\operatorname{Ker}}
\newcommand{\R}{\mathbb{R}}
\newcommand{\ma}{Monge--Amp\`{e}re }
\newtheorem*{thmnonum}{Theorem}
\newtheorem{theorem}{Theorem}[section]
\newtheorem{cor}[theorem]{Corollary}
\newtheorem{lemma}[theorem]{Lemma}
\numberwithin{equation}{section}
\newtheorem{remark}[theorem]{Remark}
\begin{document}
\thispagestyle{empty}

\begin{titlepage}
{\large \title{Cincinnati lectures on Bellman functions}}

\author[Vasily Vasyunin]{Vasily Vasyunin}
\address{St. Petersburg Department of the V.~A.~Steklov Mathematical Institute, RAS}
\author[]{\tiny Edited by\vspace{-.5cm}}
\author{\footnotesize Leonid Slavin}
\address{\scriptsize University of Cincinnati}
\date{}
\end{titlepage}
\maketitle
\thispagestyle{empty}
\newpage
\addtocontents{toc}{\SkipTocEntry}
\section*{Preface}
In January-March 2011, the Department of Mathematical Science at the University of Cincinnati held a Taft Research Seminar ``Bellman function method in harmonic analysis.''
The seminar was made possible by a generous grant from the Taft Foundation. The principal speaker at the seminar was Vasily Vasyunin. The local host and convener of the seminar was Leonid Slavin.
\medskip

The seminar was in effect a 10-week lecture- and discussion-based course. 
This manuscript represents a slightly revised content of those lectures. In particular, it includes some technical details that were omitted in class due to time constraints.
\newpage
{\large \tableofcontents}
\newpage
\section{Buckley inequality}

The average of a summable positive function (a {\it weight}) $w$ over an interval $I$
will be denoted by the symbol $\av wI$ :
$$
\av wI\df\frac1{|I|}\int_Iw(t)\,dt\,,
$$
where $|I|$ stands for the Lebesgue measure of $I$. For an interval
$J$, the symbol $A_\infty(J,\delta)$ denotes the $\delta$-ball in
the Muckenhoupt class $A_\infty$:
\eq[01]{
A_\infty(J,\delta)\df\left\{w\colon w\in L^1(J),\;w\ge0,\;
\av wI\le\delta e^{\av{\log w}I}\ \forall I\subset J \right\}\,.
}
We denote by $\D_J$ the set of all dyadic subintervals of $J$ and by
$A^d_\infty(J,\delta)$ the dyadic analogue of~\eqref{01}, i.e. in the
definition of $A^d_\infty(J,\delta)$ we consider only $I\in\D_J$.

\begin{thmnonum}[Buckley~\cite{Bu}]
There exists a constant $c=c(\delta)$ such that
$$
\sum_{I\in\D_J}|I|\Big(\frac{\av w{I_+}-\av w{I_-}}{\av wI}\Big)^2\le
c(\delta)|J|
$$
for any weight $w$ from $A^d_\infty(J,\delta)$.
\end{thmnonum}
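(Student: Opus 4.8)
The plan is to prove the inequality by the Bellman function method, which is what these lectures are about. To each dyadic $I$ attach the pair $(x,y)=(\av wI,\av{\log w}I)$; one checks, as usual, that $\log w\in L^1(I)$ for every $I\in\D_J$, so these averages make sense. By Jensen's inequality $x\ge e^{y}$, and by the $A^d_\infty(J,\delta)$ condition applied to $I\in\D_J$ one has $x\le\delta e^{y}$, so $(x,y)$ always lies in $\Omega_\delta\df\{(x,y): e^{y}\le x\le\delta e^{y}\}$. The structural fact that makes everything run is that, when $I$ splits into its halves $I_+,I_-$, the pair attached to $I$ is the midpoint of the pairs attached to $I_\pm$: $\av wI=\tfrac12(\av w{I_+}+\av w{I_-})$ and $\av{\log w}I=\tfrac12(\av{\log w}{I_+}+\av{\log w}{I_-})$. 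As the Bellman candidate I would take the natural $A_\infty$ gauge $B(x,y)\df\log x-y$, which, straight from the two inequalities defining $\Omega_\delta$, satisfies $0\le B(x,y)\le\log\delta$ there.

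The key step is a concavity estimate with a quadratic gain: for all $x_\pm>0$ and $y_\pm\in\R$, with $x=\tfrac12(x_++x_-)$ and $y=\tfrac12(y_++y_-)$,
\[
B(x,y)-\tfrac12 B(x_+,y_+)-\tfrac12 B(x_-,y_-)\ \ge\ \tfrac18\Bigl(\frac{x_+-x_-}{x}\Bigr)^{2}.
\]
The $y$-terms cancel identically, so the left side equals $\log\tfrac{x_++x_-}{2}-\tfrac12\log(x_+x_-)$; writing $t=x_+/x_-$ the inequality becomes $f(t)\ge g(t)$ for all $t>0$, where $f(t)=\log\tfrac{t+1}{2\sqrt t}$ and $g(t)=\tfrac{(t-1)^2}{2(t+1)^2}$. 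I would dispatch this by computing $(f-g)'(t)=\tfrac{(t-1)^3}{2t(t+1)^3}$, so that $f-g$ attains its minimum, $f(1)-g(1)=0$, at $t=1$. Note this estimate holds for \emph{every} pair $x_\pm>0$ irrespective of $y_\pm$, so the (non-convex) shape of $\Omega_\delta$ never enters the concavity step; it is used only to bound $B$.

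The theorem then follows by the standard dyadic telescoping. Put $B_I\df B(\av wI,\av{\log w}I)\in[0,\log\delta]$; applying the estimate with $I$ as the midpoint and $I_\pm$ as the two points and multiplying by $|I|$ (recall $|I_\pm|=\tfrac12|I|$) gives
\[
|I|B_I-|I_+|B_{I_+}-|I_-|B_{I_-}\ \ge\ \tfrac18\,|I|\Bigl(\frac{\av w{I_+}-\av w{I_-}}{\av wI}\Bigr)^{2}.
\]
Summing over all $I\in\D_J$ of the first $n$ generations, the left side telescopes to $|J|B_J$ minus the nonnegative sum of $|K|B_K$ over the $2^{n}$ intervals $K$ of generation $n$, hence
\[
\tfrac18\sum_{\substack{I\in\D_J\\ |I|>2^{-n}|J|}}|I|\Bigl(\frac{\av w{I_+}-\av w{I_-}}{\av wI}\Bigr)^{2}\ \le\ |J|B_J\ \le\ |J|\log\delta,
\]
and letting $n\to\infty$ yields the claim with $c(\delta)=8\log\delta$. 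The step that in a Bellman argument is normally the obstacle --- exhibiting a function with the right size and the right convexity --- is painless here, since the elementary gauge $\log x-y$ already works; the only genuine work is the one-variable estimate $f\ge g$ (and the routine integrability of $\log w$). The \emph{sharp} value of $c(\delta)$ is a different and harder question: it calls for the true extremal Bellman function, the solution of a degenerate Monge--Amp\`{e}re-type boundary value problem on $\Omega_\delta$, which is not needed for the qualitative statement here.
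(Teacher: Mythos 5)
Your proposal is correct and follows essentially the same route as the paper: you take the candidate $B(x_1,x_2)=\log x_1 - x_2$ (the paper uses $8(\log x_1-x_2)$, a trivial rescaling), verify the midpoint-concavity-with-quadratic-gain inequality (the paper's Lemma on the main inequality for the candidate, which it proves by writing the difference as $-4[\log(1-(\Delta_1/x_1)^2)+(\Delta_1/x_1)^2]$; your substitution $t=x_+/x_-$ and the sign of $(f-g)'$ establishes the identical fact), and run the standard dyadic telescoping to get $c(\delta)=8\log\delta$. The only presentational difference is that you state the candidate outright and check it, while the paper first derives it heuristically from the degenerate Monge--Amp\`ere equation; the substance of the verification and of the induction step is the same.
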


In the statement of the theorem we use notation $I_\pm$ to mean the
right and left halves of $I$, respectively. By $\D^n_J$ we denote
the $n$-th generation of the dyadic intervals nested in $J$, i.e.
$\D^0_J=\{J\}$, $\D^1_J=\{J_\pm\}$, etc.

Now, we are ready to introduce the main object of our consideration,
the so-called Bellman function of the problem.
\begin{align*}
\Bel(x)&=\Bel(x_1,x_2;\delta)
\\
&\df\!\!\!\!\sup_{w\in A^d_\infty(J,\delta)}
\left\{\frac1{|J|}\sum_{I\in\D_J}|I|
\Big(\frac{\av w{I_+}-\av w{I_-}}{\av wI}\Big)^2\colon
\av wJ=x_1,\;\av{\log w}J=x_2\right\}\,.
\end{align*}
This function is defined on the domain
$$
\Omega_\delta\df\left\{x=(x_1,x_2)\colon
\log\frac{x_1}\delta\le x_2\le\log x_1\right\}\,.
$$
Indeed, the right bound is simply Jensen's inequality and the left
one means that our weight $w$ is from $A_\infty(J,\delta)$. The
parameter $\delta$ is fixed throughout. Let us note that we did
not assign the index $J$ to $\Bel,$ despite the fact that all
test functions $w$ in its definition are considered on $J$. This
omission is not due to our desire to simplify notation, but rather
an indication of the very important fact that the function $\Bel$
does not depend on $J$.

A bit more notation. For a given weight $w\in A_\infty(J,\delta)$
and any subinterval $I\subset J$, there corresponds the following
point of $\Omega_\delta$: $x^I=(\av wI,\av{\log w}I)$.

({\bf Homework assignment:} {\small \sl \color{blue} Check that
the function $\Bel$ defined on the whole domain $\Omega_\delta,$
i.e. for every point $x,$ $x\in\Omega_\delta,$ there exists a
function $w\in A_\infty(J,\delta)$ such that $x=x^J.$})
\medskip

Let us now consider some properties of $\Bel$ that are clear from
its definition; these properties will help us find $\Bel$ explicitly.

\begin{lemma}[Main inequality]
For every pair of points $x^\pm$ from $\Omega_\delta$ such that
their mean $x=(x^++x^-)/2$ is also in $\Omega_\delta$\textup,
the following inequality holds
\eq[main]{
\Bel(x)\ge\frac{\Bel(x^+)+\Bel(x^-)}2+
\Big(\frac{x^+_1-x^-_1}{x_1}\Big)^2.
}
\end{lemma}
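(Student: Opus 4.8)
The plan is to manufacture a single admissible test weight $w$ on $J$ by gluing together near-optimal weights for $x^+$ and $x^-$, using the scale invariance of the problem — equivalently, the independence of $\Bel$ from $J$ remarked on above. Fix $\ve>0$. Using the homework assignment and the definition of the supremum, pick $w^\pm\in A^d_\infty(J,\delta)$ with $\av{w^\pm}J=x^\pm_1$, $\av{\log w^\pm}J=x^\pm_2$, and
$$
\frac1{|J|}\sum_{I\in\D_J}|I|\Big(\frac{\av{w^\pm}{I_+}-\av{w^\pm}{I_-}}{\av{w^\pm}I}\Big)^2\ge\Bel(x^\pm)-\ve .
$$

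Next I would transplant these to the halves of $J$. Let $\phi_\pm\colon J\to J_\pm$ be the increasing affine bijections and define a weight on $J$ by $w|_{J_\pm}\df w^\pm\circ\phi_\pm^{-1}$. Affine maps preserve both averages and the dyadic structure, so $\av w{J_\pm}=x^\pm_1$ and $\av{\log w}{J_\pm}=x^\pm_2$, hence $\av wJ=\tfrac12(x^+_1+x^-_1)=x_1$ and $\av{\log w}J=x_2$. I then have to check $w\in A^d_\infty(J,\delta)$. Every $I\in\D_J$ is either contained in $J_+$ or in $J_-$ — where $\av wI\le\delta e^{\av{\log w}I}$ holds because $w^\pm\in A^d_\infty(J,\delta)$ and this property transfers under $\phi_\pm$ — or $I=J$, where the condition reads $x_1\le\delta e^{x_2}$, i.e. $x$ satisfies the left bound defining $\Omega_\delta$. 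This is the only place the hypothesis $x\in\Omega_\delta$ is used.

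Finally I would compute the dyadic sum for $w$, splitting $\D_J=\{J\}\cup\D_{J_+}\cup\D_{J_-}$. The $I=J$ term contributes $\big(\tfrac{\av w{J_+}-\av w{J_-}}{\av wJ}\big)^2=\big(\tfrac{x^+_1-x^-_1}{x_1}\big)^2$. The sum over $\D_{J_\pm}$ depends only on $w|_{J_\pm}$, and by affine invariance it equals $\tfrac{|J_\pm|}{|J|}=\tfrac12$ times the corresponding sum for $w^\pm$ on $J$, hence is at least $\tfrac12(\Bel(x^\pm)-\ve)$. Since $w$ is an admissible competitor for $\Bel(x)$, adding the three pieces yields
$$
\Bel(x)\ge\Big(\frac{x^+_1-x^-_1}{x_1}\Big)^2+\frac{\Bel(x^+)+\Bel(x^-)}2-\ve ,
$$
and letting $\ve\to0$ gives \eqref{main}.

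There is no deep difficulty in this argument — it is the standard splitting/concatenation mechanism behind every Bellman-function induction — but the step deserving the most care is confirming that the glued weight stays in the dyadic Muckenhoupt ball, i.e. that no new bad interval is created at the top scale $I=J$; this is exactly where $x\in\Omega_\delta$ enters, and it explains why the lemma must hypothesize $x$, and not merely $x^\pm$, to lie in $\Omega_\delta$. A secondary point to watch is the bookkeeping in the affine transplantation: one must verify that averages of both $w$ and $\log w$ transfer correctly and that $\D_{J_\pm}$ is in bijection with $\D_J$ under $\phi_\pm$.
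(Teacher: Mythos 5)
Your proposal is correct and follows essentially the same route as the paper: glue near-optimal test weights for $x^\pm$ onto the two halves of $J$, split the dyadic sum into the $I=J$ term plus the sums over $\D_{J_\pm}$, check that the compound weight stays in $A^d_\infty(J,\delta)$ (with $x\in\Omega_\delta$ handling the top interval), and pass to the supremum. The only cosmetic difference is that you spell out the affine transplantation $\phi_\pm$ explicitly, whereas the paper silently invokes the $J$-independence of $\Bel$ and takes $w^\pm$ to live directly on $J_\pm$.
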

\begin{proof}
Let us split the sum in the definition of $\Bel$ into three parts:
the sum over $\D_{J_+}$, the sum over $\D_{J_-},$ and an additional
term, corresponding to $J$ itself:
\begin{align*}
\frac1{|J|}\sum_{I\in\D_J}|I|&
\Big(\frac{\av w{I_+}-\av w{I_-}}{\av wI}\Big)^2
\\
&=\frac1{2|J_+|}\sum_{I\in\D_{J_+}}|I|
\Big(\frac{\av w{I_+}-\av w{I_-}}{\av wI}\Big)^2
\\
&+\frac1{2|J_-|}\sum_{I\in\D_{J_-}}|I|
\Big(\frac{\av w{I_+}-\av w{I_-}}{\av wI}\Big)^2
\\
&+\Big(\frac{\av w{J_+}-\av w{J_-}}{\av wJ}\Big)^2.
\end{align*}
Now we choose the weights $w^\pm$ on the intervals $J_\pm$ that almost
give us the supremum in the definition of $\Bel(x^\pm)$, i.e.
$$
\frac1{|J_\pm|}\sum_{I\in\D_{J_\pm}}|I|
\Big(\frac{\av {w^\pm}{I_+}-\av {w^\pm}{I_-}}{\av {w^\pm}I}\Big)^2\ge
\Bel(x^\pm)-\eta,
$$
for an arbitrary fixed small $\eta>0$. Then for the weight $w$ on
$J,$ defined as $w^+$ on $J_+$ and $w^-$ on $J_-,$ we obtain the
inequality
\eq[02]{
\frac1{|J|}\sum_{I\in\D_J}|I|
\Big(\frac{\av w{I_+}-\av w{I_-}}{\av wI}\Big)^2\ge
\frac{\Bel(x^+)+\Bel(x^-)}2-\eta+\Big(\frac{x^+_1-x^-_1}{x_1}\Big)^2.
}
Observe that the compound weight $w$ is an admissible weight,
corresponding to the point $x$. Indeed, $x^\pm=x^{J_\pm}$ and
by construction $w^\pm\in A^d_\infty(J_\pm,\delta);$ therefore,
the weight $w$ satisfies the inequality $\av wI\le\delta
e^{\av{\log w}I}$ for all $I\in\D_{J_+},$ since $w^+$ does,
and for all $I\in\D_{J_-},$ since $w^-$ does. Lastly,
$\av wJ\le\delta e^{\av{\log w}J}$, because, by assumption,
$x\in\Omega_\delta$.

We can now take supremum in~\eqref{02} over all admissible
weights $w,$ which yields
$$
\Bel(x)\ge\frac{\Bel(x^+)+\Bel(x^-)}2-\ve+
\Big(\frac{x^+_1-x^-_1}{x_1}\Big)^2,
$$
which proves the main inequality because $\eta$ is arbitrarily small.
\end{proof}

\begin{lemma}[Boundary condition]
\label{bcBI}
$$
\Bel(x_1,\log x_1)=0\,.
$$
\end{lemma}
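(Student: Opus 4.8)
The plan is to prove the two inequalities $\Bel(x_1,\log x_1)\ge0$ and $\Bel(x_1,\log x_1)\le0$ separately. The first is immediate: the expression maximized in the definition of $\Bel$ is a sum of squares, hence nonnegative, and the supremum is taken over a nonempty family of weights (this is exactly what the homework assignment guarantees); therefore $\Bel(x)\ge0$ for every $x\in\Omega_\delta$, in particular at the boundary point $(x_1,\log x_1)$.

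For the reverse inequality I would examine which weights $w\in A^d_\infty(J,\delta)$ are admissible for the point $(x_1,\log x_1)$, i.e.\ satisfy $\av wJ=x_1$ and $\av{\log w}J=\log x_1$. Together these two conditions say precisely that $\av{\log w}J=\log\av wJ$, that is, equality holds in Jensen's inequality for the logarithm. Since $t\mapsto\log t$ is strictly concave, the equality case forces $w$ to be equal (a.e.\ on $J$) to a constant. For such a $w$ all averages $\av wI$ over all subintervals $I\subset J$ coincide, so $\av w{I_+}-\av w{I_-}=0$ for every $I\in\D_J$; hence the sum in the definition of $\Bel$ is identically $0$, and taking the supremum over this family of (constant) weights gives $\Bel(x_1,\log x_1)\le0$.

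Combining the two bounds yields $\Bel(x_1,\log x_1)=0$. The only step needing a word of care is the appeal to the \emph{strict} form of Jensen's inequality, namely that $\frac1{|J|}\int_J\log w=\log\bigl(\frac1{|J|}\int_J w\bigr)$ can hold only for an a.e.-constant $w$; this also incidentally confirms that the family of admissible weights over a boundary point is nonempty, since the constant weight $w\equiv x_1$ realizes the point $(x_1,\log x_1)$.
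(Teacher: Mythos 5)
Your argument is correct and matches the paper's proof: both hinge on the observation that equality in Jensen's inequality forces the test weight to be a.e.\ constant, so the sum in the definition of $\Bel$ vanishes. Your explicit split into the two inequalities $\ge 0$ and $\le 0$ is just a slightly more formal packaging of the same idea.
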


\begin{proof}
Let us take a boundary point $x$ of our domain $\Omega_\delta,$ that is a point with
$x_2=\log x_1.$ Since the equality in Jensen's inequality $e^{\av{w}{}}
\le\av{e^w}{}$ occurs only for constant functions $w,$ the only test function corresponding to $x$ is the 
constant weight $w=x_1.$ So,
on this boundary we have $\Bel(x)=0.$
\end{proof}

\begin{lemma}[Homogeneity]
There is a function $g$ on $[1,\delta]$ satisfying $g(1)=0$ and
such that
$$
\Bel(x)=\Bel(x_1e^{-x_2},0)=g(x_1e^{-x_2})\,.
$$
\end{lemma}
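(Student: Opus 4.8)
The plan is to exploit the scale invariance of the functional that defines $\Bel$. For $\lambda>0$ consider the dilation $w\mapsto\lambda w$ of test weights. Since $\av{\lambda w}I=\lambda\av wI$ for every interval $I$, each ratio $(\av{\lambda w}{I_+}-\av{\lambda w}{I_-})/\av{\lambda w}I$ equals the corresponding ratio for $w$; hence the entire sum
$\frac1{|J|}\sum_{I\in\D_J}|I|\big(\frac{\av w{I_+}-\av w{I_-}}{\av wI}\big)^2$
is unchanged. The constraint data transform by $\av{\lambda w}J=\lambda x_1$ and $\av{\log(\lambda w)}J=x_2+\log\lambda$.

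Next I would check that this dilation maps the admissible class to itself and respects the domain. For admissibility: if $\av wI\le\delta e^{\av{\log w}I}$ for all $I\in\D_J$, then $\av{\lambda w}I=\lambda\av wI\le\lambda\delta e^{\av{\log w}I}=\delta e^{\av{\log(\lambda w)}I}$, so $\lambda w\in A^d_\infty(J,\delta)$. For the domain: the inequalities $\log\frac{\lambda x_1}{\delta}\le x_2+\log\lambda\le\log(\lambda x_1)$ are equivalent to $\log\frac{x_1}{\delta}\le x_2\le\log x_1$, so $(\lambda x_1,x_2+\log\lambda)\in\Omega_\delta$ iff $(x_1,x_2)\in\Omega_\delta$. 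Taking supremum over $w$ and then over $\lambda w$ (a bijection of the admissible classes), one gets the identity
$$
\Bel(\lambda x_1,\,x_2+\log\lambda)=\Bel(x_1,x_2)\qquad\text{for all }\lambda>0,\ x\in\Omega_\delta.
$$

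Now I would specialize $\lambda=e^{-x_2}$ to obtain $\Bel(x_1,x_2)=\Bel(x_1e^{-x_2},0)$, and define $g(t)\df\Bel(t,0)$. It remains to pin down the range of $t=x_1e^{-x_2}$ and the value $g(1)$. From the description of $\Omega_\delta$, the condition $\log\frac{x_1}{\delta}\le x_2\le\log x_1$ is the same as $1\le x_1e^{-x_2}\le\delta$, so $t$ ranges exactly over $[1,\delta]$ and $g$ is well defined there (the scaling identity shows the value $\Bel(x)$ depends only on $t$). Finally, $t=1$ corresponds to points with $x_2=\log x_1$, in particular to the point $(1,0)$, which lies on the boundary $x_2=\log x_1$; the Boundary Condition lemma then gives $g(1)=\Bel(1,0)=0$. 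I do not anticipate a serious obstacle here: the only point requiring care is verifying that the dilation is a bijection of $A^d_\infty(J,\delta)$ onto itself and that the constraints $\av wJ=x_1$, $\av{\log w}J=x_2$ transform correctly, so that the supremum truly transfers; everything else is a direct computation.
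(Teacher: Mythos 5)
Your argument is correct and follows essentially the same route as the paper: both exploit the scale invariance $w\mapsto\lambda w$, note that the functional is homogeneous of order zero, track how the constraint data and the admissible class transform, specialize $\lambda=e^{-x_2}$, and invoke the boundary condition lemma for $g(1)=0$. Your version is slightly more explicit about verifying that the dilation is a bijection of $A^d_\infty(J,\delta)$ and that the domain $\Omega_\delta$ is preserved, but these are the same ideas the paper uses.
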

\begin{proof}
For a weight $w$ on an interval $J$ and a positive number $\tau$
consider a new weight, $\tilde w=\tau w$. If $x$ is a point from
$\Omega_\delta$ corresponding to $w$ and $J,$ i.e. $x_1=\av wJ,$
$x_2=\av{\log w}J,$ then the point $\tilde x=(\tilde x_1,\tilde x_2),$
$\tilde x_1 =\tau x_1,$ $\tilde x_2=x_2+\log\tau,$
corresponds to $\tilde w.$ Note that the expression in the definition
of $\Bel$ is homogeneous of order 0 with respect to $w,$ i.e. it does
not depend on $\tau.$ Since the weights $w$ and $\tilde w$ run over
the whole set $A^d_\infty(J,\delta)$ simultaneously, we get
$\Bel(x)=\Bel(\tilde x).$ Choosing $\tau=e^{-x_2},$ we obtain
$$
\Bel(x)=\Bel(x_1e^{-x_2},0)\,.
$$
To complete the proof, it suffices to take $g(s)=\Bel(s,0).$
The boundary condition $g(1)=0$ holds due to Lemma~\ref{bcBI}.
\end{proof}

We are now ready to demonstrate how the Bellman function method works.

\begin{lemma}[Bellman induction]
\label{Bellman induction BI}
Let $g$ be a nonnegative function on $[1,\delta]$ such that the
function $B(x)\df g(x_1e^{x_2})$ satisfies inequality~\eqref{main}
in $\Omega_\delta.$ Then Buckley's inequality holds with the
constant $c(\delta)=\|g\|_{L^\infty([1,\delta])}.$
\end{lemma}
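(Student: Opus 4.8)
The plan is the standard ``Bellman induction'': fix a test weight, feed into the main inequality~\eqref{main} the triple of Bellman points attached to each dyadic interval and its two halves, and telescope down the tree.

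So, fix $w\in A^d_\infty(J,\delta)$ and, following the notation introduced just before the lemma, attach to every $I\in\D_J$ the point $x^I=(\av wI,\av{\log w}I)$. Two observations make everything run. First, since the average of $w$ (and of $\log w$) over $I$ is the mean of the corresponding averages over the halves $I_\pm$, the point $x^I$ is the midpoint of $x^{I_+}$ and $x^{I_-}$, and
\[
\frac{x^{I_+}_1-x^{I_-}_1}{x^I_1}=\frac{\av w{I_+}-\av w{I_-}}{\av wI}.
\]
Second, the defining property of $A^d_\infty(J,\delta)$ together with Jensen's inequality says precisely that $x^{I'}\in\Omega_\delta$ for every $I'\in\D_J$. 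Hence, for each $I\in\D_J$, the hypothesis that $B$ obeys~\eqref{main} applies with $x=x^I$ and $x^\pm=x^{I_\pm}$.

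Multiplying that pointwise inequality by $|I|$ and using $|I_\pm|=|I|/2$ turns it into the ``one step down the tree'' estimate
\[
|I|\,B(x^I)\ \ge\ |I_+|\,B(x^{I_+})+|I_-|\,B(x^{I_-})+|I|\Bigl(\frac{\av w{I_+}-\av w{I_-}}{\av wI}\Bigr)^2 .
\]
Writing $S_n=\sum_{I\in\D^n_J}|I|\,B(x^I)$ and summing the last display over $I\in\D^n_J$, the first two terms on the right reassemble into $S_{n+1}$, so
\[
S_n\ \ge\ S_{n+1}+\sum_{I\in\D^n_J}|I|\Bigl(\frac{\av w{I_+}-\av w{I_-}}{\av wI}\Bigr)^2 .
\]
Telescoping from $n=0$ through $n=N-1$ gives
\[
|J|\,B(x^J)=S_0\ \ge\ S_N+\sum_{n=0}^{N-1}\ \sum_{I\in\D^n_J}|I|\Bigl(\frac{\av w{I_+}-\av w{I_-}}{\av wI}\Bigr)^2 .
\]

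Finally I would invoke the two size conditions on $B$. By hypothesis $B\ge0$ on $\Omega_\delta$, so the tail $S_N$ may simply be discarded; and since (exactly as in the Homogeneity lemma) the argument of $g$ in the definition of $B$ stays in $[1,\delta]$ as $x$ ranges over $\Omega_\delta$, we have $B(x^J)\le\|g\|_{L^\infty([1,\delta])}$. Therefore
\[
\sum_{n=0}^{N-1}\ \sum_{I\in\D^n_J}|I|\Bigl(\frac{\av w{I_+}-\av w{I_-}}{\av wI}\Bigr)^2\ \le\ \|g\|_{L^\infty([1,\delta])}\,|J|
\]
for every $N$, and letting $N\to\infty$ (all terms are nonnegative, so this is just monotone convergence) yields Buckley's inequality for $w$ with constant $c(\delta)=\|g\|_{L^\infty([1,\delta])}$. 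Since $w$ was an arbitrary element of $A^d_\infty(J,\delta)$, we are done. I do not expect a genuine obstacle here: the only points requiring care are checking that every $x^I$ and every midpoint $x^I=(x^{I_+}+x^{I_-})/2$ that occurs really lies in $\Omega_\delta$, so that~\eqref{main} is legitimately applicable at each step, and the elementary passage to the limit in $N$.
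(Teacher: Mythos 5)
Your argument is correct and is essentially identical to the paper's proof: you apply the main inequality level by level down the dyadic tree, telescope, discard the nonnegative tail $S_N$, bound $B(x^J)$ by $\|g\|_{L^\infty([1,\delta])}$, and let $N\to\infty$. The only cosmetic difference is that you name the partial sums $S_n$ and flag explicitly (and correctly) why each $x^I$ and each midpoint lies in $\Omega_\delta$, which the paper leaves implicit.
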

\begin{proof}
Fix an interval $J$ and a weight $w\in A^d_\infty(J,\delta).$ Let
us repeatedly use the main inequality in the form
$$
|I|\,B(x^I)\ge |I_+|\,B(x^{I_+})+|I_-|\,B(x^{I_-})+|I|
\Big(\frac{x^{I_+}_1-x^{I_-}_1}{x^I_1}\Big)^2,
$$
applying it first to $J$, then to the intervals of the first
generation (that is $J_\pm$), and so on until $\D^n_J:$
\begin{align*}
|J|\,B(x^J)&\ge |J_+|\,B(x^{J_+})+|J_-|\,B(x^{J_-})+|J|
\Big(\frac{x^{J_+}_1-x^{J_-}_1}{x^J_1}\Big)^2
\\
&\ge\sum_{I\in\D^n_J}|I|\,B(x^I)+\sum_{k=0}^{n-1}\sum_{I\in\D^k_J}
|I|\Big(\frac{x^{I_+}_1-x^{I_-}_1}{x^I_1}\Big)^2.
\end{align*}
Therefore,
$$
\sum_{k=0}^{n-1}\sum_{I\in\D^k_J}
|I|\Big(\frac{x^{I_+}_1-x^{I_-}_1}{x^I_1}\Big)^2\le |J|\,B(x^J)\,,
$$
and, passing to the limit as $n\to\infty$, we get
$$
\sum_{I\in\D_J}
|I|\Big(\frac{x^{I_+}_1-x^{I_-}_1}{x^I_1}\Big)^2\le |J|\,B(x^J)=
|J|\,g(x_1e^{-x_2})\le|J|\sup_{s\in[1,\delta]}g(s)\,.
$$
\end{proof}
A natural question arises: how to find such a function $g$?
To answer it, we first replace our main inequality, which is
an inequality in finite differences, by a differential inequality.
Let us denote the difference between $x^+$ and $x^-$ by $2\Delta,$
then $x^\pm=x\pm\Delta$ and the Taylor expansion around the point
$x$ gives
$$
B(x^\pm)=B(x)\pm\frac{\partial B}{\partial x_1}\Delta_1
\pm\frac{\partial B}{\partial x_2}\Delta_2
+\frac12\frac{\partial^2 B}{\partial x_1^2}\Delta_1^2
+\frac{\partial^2 B}{\partial x_1\partial x_2}\Delta_1\Delta_2
+\frac12\frac{\partial^2 B}{\partial x_2^2}\Delta_2^2+o(|\Delta|^2)\,,
$$
and, therefore,
\begin{align*}
&\frac{B(x^+)+B(x^-)}2+\Big(\frac{x^+_1-x^-_1}{x_1}\Big)^2-B(x)
\\
&\qquad=\frac12\frac{\partial^2 B}{\partial x_1^2}\Delta_1^2
+\frac{\partial^2 B}{\partial x_1\partial x_2}\Delta_1\Delta_2
+\frac12\frac{\partial^2 B}{\partial x_2^2}\Delta_2^2+
4\Big(\frac{\Delta_1}{x_1}\Big)^2
+o(|\Delta|^2)\,.
\end{align*}
Thus, under the assumption that our candidate $B$ is sufficiently
smooth, the main inequality~\eqref{main} implies the following
matrix differential inequality
\eq[matr]{
\left(
\begin{matrix}
\ds\frac{\partial^2 B}{\partial x_1^2}+\frac8{\;x_1^2}&\ &
\ds\frac{\partial^2 B}{\;\partial x_1\partial x_2}
\\
&&
\\
\ds\frac{\partial^2 B}{\;\partial x_1\partial x_2}&\ &
\ds\frac{\partial^2 B}{\partial x_2^2}
\end{matrix}
\right)\le0\,.
}

By the preceding two lemmata, we can restrict our search to
functions $B$ of the form $B(x_1,x_2)=g(x_1e^{-x_2}),$ where $g$
is a function on the interval $[1,\delta].$ In terms of $g,$ our
condition~\eqref{matr} can be rewritten as follows:
$$
\left(
\begin{matrix}
\ds e^{-2x_2}\Big(g''+\frac8{s^2}\Big)&\ &
\ds-e^{-x_2}(sg')'
\\
&&
\\
\ds-e^{-x_2}(sg')' &\ &
\ds s(sg')'
\end{matrix}
\right)\le0\,,
$$
where $g=g(s)$ and $s=x_1e^{-x_2}$. This matrix inequality is
equivalent to three scalar inequalities:
\begin{align}
\label{B11}
g''+\frac8{s^2}&\le0,
\\
\label{B22}
(sg')'&\le0,
\end{align}
and the condition that the determinant of the matrix must be
nonnegative. However, we replace the last requirement by a
stronger one --- we require the determinant to be identically
zero. This requirement comes from our desire to find the best
possible estimate: if we take an extremal weight $w,$ i.e. a
weight on which the supremum in the definition of the Bellman
function is attained, then we must have equalities on each step
of the Bellman induction; therefore, on each step the main
inequality~\eqref{main} becomes equality. Thus, for each dyadic
subinterval $I$ of $J$ there exists a direction through the point
$x^I$ in $\Omega_\delta$ along which the quadratic form given
by~\eqref{matr} is identically zero. Hence, the matrix~\eqref{matr}
has a non-trivial kernel and so must have a zero determinant.

Calculating the determinant, we get the equation
$$
\Big(g'-\frac8s\Big)(sg')'=0\,.
$$
The general solution of this equation is $g(s)=c\log s+ c_1.$ Due to the boundary condition $g(1)=0,$
we have to take $c_1=0.$ 

Now we need to chose another constant, $c.$ To this end, we return
to the necessary conditions~\eqref{B11}--\eqref{B22}. The second
inequality is fulfilled for all $c,$ because the expression is
identically zero, while the first one gives $c\ge8.$ Since we would like
to have $g$ as small as possible (as it gives the upper bound in
Buckley's inequality), it is natural to take $c=8.$ Finally, we get
$$
g(s)=8\log s\qquad\text{and}\qquad B(x_1,x_2)=8(\log x_1-x_2)\,.
$$

\begin{lemma}
The function
$$
B(x_1,x_2)=8(\log x_1-x_2)
$$
satisfies the main inequality~\eqref{main}.
\end{lemma}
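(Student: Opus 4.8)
The plan is to verify inequality~\eqref{main} for the explicit candidate $B(x_1,x_2)=8(\log x_1-x_2)$ by a direct computation. I would deliberately avoid trying to recover~\eqref{main} by ``integrating'' the differential inequality~\eqref{matr}: passing from the infinitesimal statement back to the finite-difference one requires some care, because the correction term $(\Delta_1/x_1)^2$ carries a denominator that varies along the segment joining $x^-$ and $x^+$. With $B$ in explicit form this detour is unnecessary, since everything collapses to a one-variable inequality.

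First I would use that $B$ is affine in $x_2$ together with $x_2=(x_2^++x_2^-)/2$, so that the $-8x_2$ contributions on the two sides of~\eqref{main} cancel and only the first coordinates survive. Writing $a=x_1^+$, $b=x_1^-$, and $m=x_1=(a+b)/2$, inequality~\eqref{main} becomes
$$
8\log m\ \ge\ 4\log(ab)+\Big(\frac{a-b}{m}\Big)^2
\qquad\Longleftrightarrow\qquad
4\log\frac{m^2}{ab}\ \ge\ \frac{(a-b)^2}{m^2}.
$$
Since $(a-b)^2=(a+b)^2-4ab=4m^2-4ab$, the right-hand side equals $4\big(1-\frac{ab}{m^2}\big)$; setting $t=\frac{ab}{m^2}$, which lies in $(0,1]$ by the arithmetic--geometric mean inequality, the claim reads $-\log t\ge 1-t$, i.e. the elementary bound $\log t\le t-1$, valid for every $t>0$ with equality only at $t=1$. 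This proves the lemma, and, combined with Lemma~\ref{Bellman induction BI}, it yields Buckley's inequality with $c(\delta)=8\log\delta$.

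I do not expect any genuine obstacle. The only steps that need a moment's attention are the bookkeeping that reduces the two-variable finite-difference inequality to the scalar one, and the observation that $\log t\le t-1$ holds for every positive $t$ --- in particular the argument never uses the hypothesis $x,x^\pm\in\Omega_\delta$, so~\eqref{main} actually holds on a larger set. As a consistency check, equality in~\eqref{main} forces $t=1$, i.e. $x^+_1=x^-_1$; note also that for this particular $B$ the matrix in~\eqref{matr} is identically zero, its leading discrepancy in the Taylor expansion being of order $|\Delta|^4$ rather than $|\Delta|^2$.
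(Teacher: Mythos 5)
Your proof is correct and is essentially the paper's proof in a different parametrization: writing $t=ab/m^2=1-(\Delta_1/x_1)^2$, your inequality $\log t\le t-1$ is exactly the paper's $\log\bigl(1-(\Delta_1/x_1)^2\bigr)+(\Delta_1/x_1)^2\le0$. The side remarks (the $x_2$-terms cancel since $B$ is affine in $x_2$; the matrix in~\eqref{matr} vanishes identically for this $B$; the argument does not use $x,x^\pm\in\Omega_\delta$) are all accurate.
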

\begin{proof}
Put, as before, $\Delta=\frac12(\,x^+-x^-)$, so $x^\pm=x\pm\Delta$. Then
\begin{align*}
B(x)&-\frac{B(\,x^+)+B(\,x^-)}2-\Big(\frac{x^+_1-x^-_1}{x_1}\Big)^2
\\
&=8\log x_1-8x_2-4\log(\,x^+_1x^-_1)+4(\,x^+_2+x^-_2)
-\Big(\frac{x^+_1-x^-_1}{x_1}\Big)^2
\\
&=4\log\frac{x_1^2}{(x_1+\Delta_1)(x_1-\Delta_1)}
-4\Big(\frac{\Delta_1}{x_1}\Big)^2
\\
&=-4\left[\log\Big(1-\Big(\frac{\Delta_1}{x_1}\Big)^2\Big)+
\Big(\frac{\Delta_1}{x_1}\Big)^2\right]\ge0\,.\rule{0pt}{25pt}
\end{align*}
\end{proof}

Now we can apply Lemma~\ref{Bellman induction BI} to $g(s)=8\log s,$
which yields the following
\begin{thmnonum}
The estimate
$$
\sum_{I\in\D_J}|I|\Big(\frac{\av w{I_+}-\av w{I_-}}{\av wI}\Big)^2\le
8\log\delta\,|J|
$$
holds for any weight $w\in A^d_\infty(J,\delta)$.
\end{thmnonum}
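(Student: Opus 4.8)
The plan is to invoke Lemma~\ref{Bellman induction BI} with the explicit choice $g(s)=8\log s$ on $[1,\delta]$. For that I must check its two hypotheses: that $g$ is nonnegative on $[1,\delta]$, and that the associated function $B(x_1,x_2)=g(x_1e^{-x_2})=8(\log x_1-x_2)$ satisfies the main inequality~\eqref{main} throughout $\Omega_\delta$. The first is immediate, since $\log s\ge0$ for $s\ge1$. The second is exactly the content of the preceding lemma, so I would either cite it directly or reproduce its short verification: with $\Delta=\tfrac12(x^+-x^-)$ so that $x^\pm=x\pm\Delta$, the $x_2$-terms cancel because $B$ is affine in $x_2$ and $x_2=\tfrac12(x^+_2+x^-_2)$, and the remaining part reduces to
$$
-4\left[\log\!\Big(1-\Big(\tfrac{\Delta_1}{x_1}\Big)^2\Big)+\Big(\tfrac{\Delta_1}{x_1}\Big)^2\right],
$$
which is nonnegative by the elementary inequality $\log(1-t)+t\le0$ for $t\in[0,1)$; note that $|\Delta_1/x_1|<1$ because $x^\pm\in\Omega_\delta$ forces $x_1\pm\Delta_1>0$.

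Once both hypotheses are in hand, Lemma~\ref{Bellman induction BI} yields Buckley's inequality with constant $c(\delta)=\|g\|_{L^\infty([1,\delta])}$. Since $g(s)=8\log s$ is increasing on $[1,\delta]$, this norm equals $g(\delta)=8\log\delta$, which is precisely the constant claimed. Concretely, unwinding what the lemma does: for a fixed interval $J$ and $w\in A^d_\infty(J,\delta)$ one applies~\eqref{main} at $J$, then at each interval of $\D^1_J$, and so on down to $\D^n_J$, obtaining
$$
\sum_{k=0}^{n-1}\sum_{I\in\D^k_J}|I|\Big(\frac{x^{I_+}_1-x^{I_-}_1}{x^I_1}\Big)^2\le|J|\,B(x^J);
$$
then one lets $n\to\infty$ and uses $B(x^J)=8(\log x_1-x_2)\le8\log\delta$ on $\Omega_\delta$.

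The main obstacle, such as it is, is the verification of~\eqref{main} for the explicit candidate $B(x_1,x_2)=8(\log x_1-x_2)$ --- but this has already been settled in the preceding lemma, where it collapses to the one-variable estimate $\log(1-t)+t\le0$ on $[0,1)$. Likewise, the derivation of $g=8\log s$ (from the degeneracy equation $(g'-\tfrac8s)(sg')'=0$ together with $g(1)=0$, with $c=8$ forced by~\eqref{B11}) has already been carried out above. Hence the theorem itself is a bookkeeping assembly of Lemma~\ref{Bellman induction BI} and the preceding lemma; the one point worth a second look is that the hypotheses of Lemma~\ref{Bellman induction BI} hold verbatim --- $g\ge0$ on $[1,\delta]$ and $B(x)=g(x_1e^{-x_2})$ --- so that the constant is read off correctly as $\|g\|_{L^\infty([1,\delta])}=8\log\delta$.
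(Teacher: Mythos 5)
Your proposal is correct and follows exactly the paper's route: apply the Bellman induction lemma to $g(s)=8\log s$, using the preceding lemma's verification that $B(x_1,x_2)=8(\log x_1 - x_2)$ satisfies the main inequality, and read off the constant as $\|g\|_{L^\infty([1,\delta])}=8\log\delta$. The only differences are cosmetic: you spell out the unwinding of the lemma and the positivity check $|\Delta_1/x_1|<1$, and you implicitly correct a typo in the paper's statement of the lemma ($g(x_1e^{x_2})$ should read $g(x_1e^{-x_2})$, as the lemma's own proof confirms).
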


Concluding this section, I would like to emphasize that we still
have not found the Bellman function $\Bel.$ The theorem just proved
guarantees only the estimate $$\Bel(x)\le8(\log x_1-x_2).$$

\section{Homework assignment: A simple two-weight inequality}
{{\small\sl\color{blue} As an exercise, verify every step, outlined
below, of the proof of this theorem:

\begin{thmnonum}
If two weights $u,v\in L^1(J)$ satisfy the condition
$$
\sup_{I\in\D_J}\av uI\av vI\le M^2\,,
$$
then
$$
\frac1{|J|}\sum_{I\in\D_J}|I|\,|\av u{I_+}-\av u{I_-}|\,
|\av v{I_+}-\av v{I_-}|\le16M\sqrt{\av uJ\av vJ}\,.
$$
\end{thmnonum}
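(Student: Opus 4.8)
The plan is to run the Bellman-function scheme of Section~1. On the domain
$$
\Omega_M\df\bigl\{(x_1,x_2)\colon x_1,x_2\ge0,\ x_1x_2\le M^2\bigr\}
$$
introduce the Bellman function of the problem,
$$
\mathbf B(x_1,x_2)\df\sup\Bigl\{\tfrac1{|J|}\sum_{I\in\D_J}|I|\,
\bigl|\av u{I_+}-\av u{I_-}\bigr|\,\bigl|\av v{I_+}-\av v{I_-}\bigr|\Bigr\}\,,
$$
the supremum being over all pairs of weights $u,v$ on $J$ with $\av uJ=x_1$, $\av vJ=x_2$ and $\sup_{I\in\D_J}\av uI\av vI\le M^2$. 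Just as in Section~1 one checks that $\mathbf B$ does not depend on $J$ and that every point of $\Omega_M$ is realized by some admissible pair (e.g.\ by constant weights $u\equiv x_1$, $v\equiv x_2$). Three properties then come, by the arguments of Section~1: \emph{(i)} the main inequality
$$
\mathbf B(x)\ge\tfrac12\bigl(\mathbf B(x^+)+\mathbf B(x^-)\bigr)+\bigl|x^+_1-x^-_1\bigr|\,\bigl|x^+_2-x^-_2\bigr|
$$
for $x=\tfrac12(x^++x^-)\in\Omega_M$, obtained by splitting the sum over $\D_J$ into the sums over $\D_{J_+}$ and $\D_{J_-}$ plus the $I=J$ term and gluing near-optimal test pairs on $J_\pm$---the glued pair is admissible, since the constraint on $\D_{J_\pm}$ is inherited from the pieces, and on $J$ itself it reads $x_1x_2\le M^2$, i.e.\ $x\in\Omega_M$; \emph{(ii)} the boundary condition $\mathbf B(0,x_2)=\mathbf B(x_1,0)=0$, because a nonnegative weight with zero average vanishes a.e.; \emph{(iii)} homogeneity: replacing $(u,v)$ by $(\lambda u,\lambda^{-1}v)$ changes neither the constraint nor the sum, so $\mathbf B$ is a function of the product $x_1x_2$ alone.

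The ``Bellman induction'' lemma carries over verbatim: if $B\ge0$ on $\Omega_M$ satisfies the main inequality, then for any $J$ and any admissible $(u,v)$, iterating $|I|\,B(x^I)\ge|I_+|\,B(x^{I_+})+|I_-|\,B(x^{I_-})+|I|\,\bigl|\av u{I_+}-\av u{I_-}\bigr|\,\bigl|\av v{I_+}-\av v{I_-}\bigr|$ down the dyadic tree and passing to the limit gives
$$
\sum_{I\in\D_J}|I|\,\bigl|\av u{I_+}-\av u{I_-}\bigr|\,\bigl|\av v{I_+}-\av v{I_-}\bigr|\le|J|\,B\bigl(\av uJ,\av vJ\bigr)\,,
$$
where $x^I=(\av uI,\av vI)\in\Omega_M$ and $x^I=\tfrac12(x^{I_+}+x^{I_-})$, so the main inequality applies at each step. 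Hence it suffices to exhibit one concrete $B$ with $B\ge0$, with the main inequality, and with $B(x_1,x_2)\le16M\sqrt{x_1x_2}$. Guided by \emph{(iii)} and by the form of the majorant, I would try
$$
B(x_1,x_2)=16M\sqrt{x_1x_2}-4x_1x_2\,.
$$
On $\Omega_M$ one has $\sqrt{x_1x_2}\le M$, whence $B=4\sqrt{x_1x_2}\,\bigl(4M-\sqrt{x_1x_2}\bigr)\ge0$ and $B\le16M\sqrt{x_1x_2}$ trivially.

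The real work is to check the main inequality for this $B$. Writing $x^\pm=x\pm\Delta$ and using $x^+_1x^+_2+x^-_1x^-_2=2x_1x_2+2\Delta_1\Delta_2$ and $|x^+_1-x^-_1|\,|x^+_2-x^-_2|=4|\Delta_1\Delta_2|$, the inequality becomes
$$
16M\sqrt{x_1x_2}+4\Delta_1\Delta_2-4|\Delta_1\Delta_2|\ge 8M\bigl(\sqrt{x^+_1x^+_2}+\sqrt{x^-_1x^-_2}\,\bigr)\,.
$$
If $\Delta_1\Delta_2\ge0$ the left side is just $16M\sqrt{x_1x_2}$ and the claim is the concavity (Jensen's inequality) of $(a,b)\mapsto\sqrt{ab}$ applied at the midpoint of $(x^+_1,x^+_2)$ and $(x^-_1,x^-_2)$. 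If $\Delta_1\Delta_2<0$ it reduces to $M\bigl(2\sqrt{x_1x_2}-\sqrt{x^+_1x^+_2}-\sqrt{x^-_1x^-_2}\,\bigr)\ge|\Delta_1\Delta_2|$, which I would attack by rationalizing twice: with $\mu_\pm=\sqrt{x^\pm_1x^\pm_2}$,
$$
2\sqrt{x_1x_2}-\mu_+-\mu_-=\frac{4x_1x_2-(\mu_++\mu_-)^2}{2\sqrt{x_1x_2}+\mu_++\mu_-}\,,
$$
then expand $(\mu_++\mu_-)^2$ using $\mu_+^2+\mu_-^2=2x_1x_2+2\Delta_1\Delta_2$ and $\mu_+\mu_-=\sqrt{(x_1^2-\Delta_1^2)(x_2^2-\Delta_2^2)}$, and rationalize the leftover factor $x_1x_2-\Delta_1\Delta_2-\mu_+\mu_-$ once more; the numerator collapses to $2(x_1\Delta_2-x_2\Delta_1)^2$. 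One then bounds the resulting denominator above by $8(x_1x_2)^{3/2}$---via $\mu_+\mu_-\le x_1x_2-|\Delta_1\Delta_2|$ (a consequence of $x_1^2\Delta_2^2+x_2^2\Delta_1^2\ge2x_1x_2|\Delta_1\Delta_2|$ together with $\Delta_1\Delta_2<0$) and $\mu_++\mu_-\le2\sqrt{x_1x_2}$ (concavity of $\sqrt{ab}$ again)---and the numerator below by $8x_1x_2|\Delta_1\Delta_2|$ (since $(x_1\Delta_2-x_2\Delta_1)^2\ge4x_1x_2|\Delta_1\Delta_2|$ when $\Delta_1\Delta_2<0$). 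What remains is exactly $\sqrt{x_1x_2}\le M$, i.e.\ $x\in\Omega_M$. Feeding this $B$ into the Bellman-induction lemma and using $B\bigl(\av uJ,\av vJ\bigr)\le16M\sqrt{\av uJ\av vJ}$ finishes the proof.

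I expect the case $\Delta_1\Delta_2<0$ of the main inequality to be the only step requiring genuine care: the absolute value in the extra term wrecks the plain ``apply Jensen'' argument, so one must quantify the gap in the concavity of $\sqrt{ab}$, which is precisely what the double rationalization delivers. The non-convexity of $\Omega_M$ (it is the region below a hyperbola) causes no trouble, because every point $x^I$ produced by the dyadic splitting, as well as every relevant midpoint, automatically lies in $\Omega_M$.
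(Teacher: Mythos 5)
Your proposal is correct and sits inside the same Bellman-function scheme the paper outlines: same Bellman function (up to the factor of $4$ coming from $|(u,h_I)|=\tfrac12\sqrt{|I|}\,|\av u{I_+}-\av u{I_-}|$), same domain, same main inequality, homogeneity and boundary condition, and the same candidate $16M\sqrt{x_1x_2}-4x_1x_2=4\bigl(4M\sqrt{x_1x_2}-x_1x_2\bigr)$. Where you diverge is in how the candidate is justified. The paper's outline derives it by writing down the infinitesimal (Hessian) form of the main inequality, forcing the determinant to vanish, and solving the resulting ODE $g(s)=2c\sqrt{s}-s+c_1$; the verification that this candidate actually satisfies the \emph{finite-difference} main inequality is deferred to~\cite{VaVo1}. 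You skip the Monge--Amp\`ere derivation altogether, take the candidate as given, and verify the finite-difference inequality directly. Your case split on the sign of $\Delta_1\Delta_2$ is exactly right: the case $\Delta_1\Delta_2\ge0$ is plain concavity of $(a,b)\mapsto\sqrt{ab}$, and the case $\Delta_1\Delta_2<0$ comes down to $M\bigl(2\sqrt{x_1x_2}-\mu_+-\mu_-\bigr)\ge|\Delta_1\Delta_2|$, where the two rationalizations produce $2\sqrt{x_1x_2}-\mu_+-\mu_-=\dfrac{2(x_1\Delta_2-x_2\Delta_1)^2}{(2\sqrt{x_1x_2}+\mu_++\mu_-)(x_1x_2-\Delta_1\Delta_2+\mu_+\mu_-)}$, and your bounds $\mu_++\mu_-\le2\sqrt{x_1x_2}$, $\mu_+\mu_-\le x_1x_2-|\Delta_1\Delta_2|$, $(x_1\Delta_2-x_2\Delta_1)^2\ge4x_1x_2|\Delta_1\Delta_2|$ all check out, yielding precisely $\sqrt{x_1x_2}\le M$. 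This direct route has a real advantage worth noting: $\Omega_M$ is not convex, and for $x^\pm\in\Omega_M$ with $\tfrac12(x^++x^-)\in\Omega_M$ the connecting segment may leave $\Omega_M$, so a negative-Hessian argument does not transfer to the finite-difference inequality without extra work. Your computation bypasses that issue entirely, using only that $x$ itself is in $\Omega_M$.
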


\subsection{Remark on the Haar functions}
If we introduce the normalized Haar system
$$
h\cii I(t)=\frac1{\sqrt{|I|}}
\begin{cases}
-1\quad&\text{if }\ t\in I_-,
\\
\phantom{-}1\quad&\text{if }\ t\in I_+,
\end{cases}
$$
then $\sqrt{|I|}(\,\av w{I_+}-\av w{I_-})=2(\,w,h\cii I)$.
Thus the statement of the Theorem above can be rewritten in the form
$$
\frac1{|J|}\sum_{I\in\D_J}|(u,h\cii I)|\,|(v,h\cii I)|
\le4M\sqrt{\av uJ\av vJ}
$$
and that of Buckley's inequality, in the form
$$
\sum_{I\in\D_J}\left(\frac{(w,h\cii I)}{\;\av wI}\right)^2
\le2\log\delta\,|J|\,.
$$

\subsection{The Bellman function of the problem}
$$
\Bel(x;m,M)\df\sup_{u,v}\Big\{\frac1{|J|}
\sum_{I\in\D_J}|(u,h\cii I)|\,|(v,h\cii I)|\Big\}\,,
$$
where the supremum is taken over the set of all admissible pairs
of weights, i.e. such pairs $u,v$ that $\av uJ=x_1,$ $\av vJ=x_2,$
and $m^2\le\av uI\av vI\le M^2,$ $\forall I\in\D_J.$ To prove the
theorem means to prove the inequality
$$
\Bel(x;0,M)\le4M\sqrt{x_1x_2}\,.
$$
The domain of $\Bel$ is
$$
\Omega=\big\{x=(x_1,x_2)\colon m^2\le x_1x_2\le M^2\big\}\,.
$$

\subsection{Properties}
\begin{itemize}
\item The function $\Bel$ does not depend on $J$.
\item Homogeneity: $\Bel(x_1,x_2)=\Bel(x_1x_2,1)\df g(x_1x_2)$.
\item Boundary condition: $\Bel|_{x_1x_2=m^2}=g(m^2)=0$.\rule{0pt}{13pt}
\end{itemize}
\smallskip

\subsection{Main inequality}~

\noindent For every pair $x^\pm\in\Omega$ such that
$x=\frac12x^++\frac12x^-\in\Omega$, we have
$$
\Bel(x)\ge\frac{\Bel(x^+)+\Bel(x^-)}2+
\frac{|x^+_1-x^-_1|\,|x^+_2-x^-_2|}4.
$$
In the differential form,
$$
\left(
\begin{matrix}
\ds\frac{\partial^2 B}{\partial x_1^2}&\ &
\ds\frac{\partial^2 B}{\;\partial x_1\partial x_2}\pm1
\\
&&
\\
\ds\frac{\partial^2 B}{\;\partial x_1\partial x_2}\pm1&\ &
\ds\frac{\partial^2 B}{\partial x_2^2}
\end{matrix}
\right)\le0\,,
$$
or, in terms of $g,$
$$
\left(
\begin{matrix}
\ds x_2^2g''&\ &
\ds g'+x_1x_2g''+\sigma
\\
&&
\\
\ds g'+x_1x_2g''+\sigma&\ &
\ds x_1^2g''
\end{matrix}
\right)\le0\,,
$$
where $\sigma=\pm1$.

The condition that this matrix be degenerate gives us a differential
equation, whose general solution is $g(s)=2c\sqrt{s}-s+c_1$ (this
is quite a bit of work).  The constant $c_1$ can be found from the
boundary condition: $c_1=m^2-2cm,$ and the constant $c$ has to be
chosen as small as possible to obtain the best estimate: $c=2M.$
Thus, the answer is
$$
\Bel(x;m,M)\le4M\sqrt{x_1x_2}-x_1x_2+m^2-4mM\,.
$$
}}

All details of the proof that, in fact, we have found the true Bellman function, i.e.
$$
\Bel(x;m,M)=4M\sqrt{x_1x_2}-x_1x_2+m^2-4mM\,,
$$
can be found in~\cite{VaVo1}.

\section{John--Nirenberg inequality, Part I}

A function $\varphi\in L^1(J)$ is said to belong to the space $\BMO(J)$ if
$$
\sup_I\av{|\varphi(s)-\av\varphi{I}|}I<\infty
$$
for all subintervals $I\subset J.$ If this condition holds only for the dyadic
subintervals $I\in\D_J,$ we will write $\varphi\in\BMO^d(J)$.
In fact, the following is true
$$
\varphi\in \BMO(J)\iff \Big(\int_I|\varphi(s)-\av{\varphi}I|^p\,
ds\Big)^{\frac1p}<\infty,\quad\forall p\in(0,\infty),\ I\subset J\,.
$$
If we factor over the constants, we get a normed space, where the
expression on the right-hand side can be taken as one of the
equivalent norms for any $p\in[1,\infty).$
In what follows, we will use the $L^2$-based norm:
$$
\|\varphi\|_{\BMO(J)}^2=
\sup_{I\subset J}\frac1{|I|}\int_I|\varphi(s)-\av{\varphi}I|^2\,ds=
\sup_{I\subset J}\left(\av{\varphi^2}I-\av{\varphi}I^2\right)\,.
$$
The $\BMO$ ball of radius $\ve$ centered at $0$ will be denoted by
$\BMO_\ve.$ Using the Haar decomposition
$$
\varphi(s)=\av{\varphi}J+\sum_{I\in\D_J}(\varphi,h\cii I)h\cii I(s)\,,
$$
we can write down the expression for the norm in the following way
$$
\|\varphi\|_{\BMO(J)}^2=
\sup_{I\subset J}\frac1{|I|}\sum_{L\in\D_I}|(\varphi,h\cii L)|^2 =\frac14
\sup_{I\subset J}\frac1{|I|}\sum_{L\in\D_I}|L|\,
\big(\,\av\varphi{L^+}-\av\varphi{L^-}\big)^2.
$$

\begin{thmnonum}[John--Nirenberg \cite{JoNi}]
There exist absolute constants $c_1$ and $c_2$ such that
$$
\left|\left\{s\in J\colon|\varphi(s)-\av{\varphi}J|
\ge\lambda\right\}\right|\le c_1e^{-c_2\frac\lambda{\|\varphi\|}}|J|
$$
 for all $\varphi\in\BMO_\ve(J).$
\end{thmnonum}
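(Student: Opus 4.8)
The plan is to derive the stated weak-type estimate from a one-sided exponential bound, and to prove that bound by exactly the Bellman induction scheme used above for Buckley's inequality.

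\smallskip

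\emph{Reductions.} Since $\D_J$ is a subfamily of all subintervals of $J$, one has $\|\varphi\|_{\BMO^d(J)}\le\|\varphi\|_{\BMO(J)}$, so $\BMO_\ve(J)\subset\BMO^d_\ve(J)$ and it suffices to prove the theorem for $\varphi\in\BMO^d_\ve(J)$. Replacing $\varphi$ by $-\varphi$, which lies in the same ball, we may estimate only $|\{s\in J:\varphi(s)-\av{\varphi}{J}\ge\lambda\}|$. By Chebyshev's inequality,
\[
\big|\{\varphi-\av{\varphi}{J}\ge\lambda\}\big|\le e^{-\tau\lambda}\,|J|\,\av{e^{\tau(\varphi-\av{\varphi}{J})}}{J}\qquad(\tau>0),
\]
and $\varphi\mapsto\tau\varphi$ scales the $\BMO$ norm by $\tau$; hence, choosing $\tau=\ve_0/\|\varphi\|_{\BMO(J)}$, the whole problem reduces to the claim that there are a small absolute constant $\ve_0>0$ and an absolute constant $C$ with $\av{e^{\varphi-\av{\varphi}{J}}}{J}\le C$ for every $\varphi\in\BMO^d_{\ve_0}(J)$. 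Granting this, the theorem holds with $c_2=\ve_0$ and $c_1=2C$.

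\smallskip

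\emph{The Bellman scheme.} Fix $\ve_0$ and use the state variables $x_1=\av{\varphi}{I}$, $x_2=\av{\varphi^2}{I}$. Jensen's inequality together with $\av{\varphi^2}{I}-\av{\varphi}{I}^2\le\ve_0^2$ keeps $x^I=(\av{\varphi}{I},\av{\varphi^2}{I})$ in
\[
\Omega_{\ve_0}=\big\{x=(x_1,x_2):\ x_1^2\le x_2\le x_1^2+\ve_0^2\big\},
\]
and, as in the Buckley argument, $x^I=\tfrac12 x^{I_+}+\tfrac12 x^{I_-}$. Because the functional $\av{e^{\varphi}}{}$ splits additively over $J_\pm$, the main inequality now carries no gain term: I look for a continuous $B$ on $\Omega_{\ve_0}$ with $B(x)\ge\tfrac12 B(x^+)+\tfrac12 B(x^-)$ for all admissible $x^\pm$ and with $B(x_1,x_1^2)=e^{x_1}$ (the only test function with $\av{\varphi^2}{}=\av{\varphi}{}^2$ is the constant $\av{\varphi}{}$). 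The Bellman induction then applies verbatim: iterating down to $\D^n_J$ gives $|J|\,B(x^J)\ge\sum_{I\in\D^n_J}|I|\,B(x^I)$, and letting $n\to\infty$, using that $x^I\to(\varphi(s),\varphi(s)^2)$ a.e., yields $\av{e^{\varphi}}{J}\le B(x^J)$, i.e. $\av{e^{\varphi-\av{\varphi}{J}}}{J}\le B\big(0,\av{\varphi^2}{J}-\av{\varphi}{J}^2\big)$. A priori finiteness for this limit is not an issue: one first proves the bound for $\varphi$ truncated to $[-N,N]$ — truncation is a contraction, hence does not increase the $L^2$-based $\BMO^d$ norm — and then removes the truncation by Fatou's lemma.

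\smallskip

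\emph{The supersolution.} Shifting $\varphi$ by a constant shows the candidate must be of the form $B(x_1,x_2)=e^{x_1}h(u)$ with $u=x_2-x_1^2\in[0,\ve_0^2]$, and a short computation (the determinant of the Hessian turns out to be free of $x_1$) reduces $\nabla^2 B\le0$ to the two scalar conditions $h''\le0$ and $h''(h-2h')\ge(h')^2$. Solving the degenerate equation $h''(h-2h')=(h')^2$ would give the sharp answer; for a non-sharp result it is enough to exhibit one explicit solution of the inequalities, and $h(u)=1+u-u^2$ works once $\ve_0$ is small — there $h''=-2$, the second condition becomes $1-6u-2u^2\ge0$ (true on $[0,\ve_0^2]$), and $h$ is increasing on $[0,\ve_0^2]$ — so $\av{e^{\varphi-\av{\varphi}{J}}}{J}\le h(\ve_0^2)\le 1+\ve_0^2=:C$.

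\smallskip

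\emph{The main obstacle.} The one genuinely delicate point is that $\Omega_{\ve_0}$ is \emph{not} convex: it is the intersection of the convex region $\{x_2\ge x_1^2\}$ with the non-convex region $\{x_2\le x_1^2+\ve_0^2\}$. Hence a chord joining two points of $\Omega_{\ve_0}$ may leave $\Omega_{\ve_0}$, and one cannot simply invoke $\nabla^2 B\le0$ to get the finite-difference main inequality that the induction uses. Instead one must check $B(x)\ge\tfrac12 B(x^+)+\tfrac12 B(x^-)$ directly: writing $x_1^\pm=x_1\pm\delta$ and $u^\pm=\bar u\pm w$, admissibility of $x$ forces $|\delta|\le\ve_0$ and $|w|\le\bar u\le\ve_0^2$, and expanding both sides shows the inequality is controlled by the positive form $\tfrac12\delta^2-\delta w+w^2=\tfrac12(\delta-w)^2+\tfrac12 w^2\ge0$, all remaining terms being smaller by a factor $O(\ve_0^2)$ because the increments are small. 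This is where the smallness of $\ve_0$ is spent, and it is the only real computation in the argument. (Recovering the sharp constants $c_1,c_2$ is a different matter: it requires the true Bellman function of the problem, built by foliating $\Omega_\ve$ with chords of the parabola $x_2=x_1^2$, switching the foliation at a threshold value of $x_1$ and gluing along the seam — and that, presumably, is the content of Part~II.)
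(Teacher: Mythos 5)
Your proof is correct and takes a route the paper deliberately sidesteps. The paper never derives the weak-type estimate directly: it asserts equivalence with the integral form $\av{e^\varphi}{J}\le c(\ve)e^{\av\varphi{J}}$ and then devotes Parts~I--III to finding the \emph{sharp} constant $c(\ve)=e^{-\ve}/(1-\ve)$ by solving the degenerate Monge--Amp\`ere equation and constructing optimizers. You supply the missing reduction (exponential Chebyshev, the scaling $\varphi\mapsto\tau\varphi$, and the inclusion $\BMO_\ve\subset\BMO^d_\ve$) and then settle for a non-sharp integral bound, exhibiting the polynomial supersolution $h(u)=1+u-u^2$ of the Bellman inequalities $h''\le0$ and $h''(h-2h')\ge(h')^2$ instead of solving the degenerate ODE. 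Your derivation of those two scalar inequalities from $\nabla^2B\le0$ is correct: the Hessian determinant equals $e^{2x_1}\big[h''(h-2h')-(h')^2\big]$, which is indeed free of $x_1$. This approach is more elementary and self-contained, at the cost of the sharp constants --- which the stated weak-type theorem does not ask for.

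Two places are compressed more than the paper allows itself. First, the assertion that truncation does not increase the $L^2$-based dyadic BMO norm is true but not obvious; the paper proves it as the Cut-off Lemma (Lemma~\ref{norm}) via an explicit variance identity. Second, your verification of the finite-difference inequality $B(x)\ge\frac12B(x^+)+\frac12B(x^-)$ over the non-convex $\Omega_{\ve_0}$ is plausible but the error analysis is delicate: the increments $\delta$ and $w$ live on different scales ($\ve_0$ versus $\ve_0^2$), and you need the cubic remainder to be dominated by the quadratic form uniformly over that anisotropic range. A cleaner route to the same conclusion: along the chord one computes $x_2(t)-x_1(t)^2=(1-t)u^-+tu^++4t(1-t)\delta^2$, and admissibility of the midpoint gives $u=\frac12u^-+\frac12u^++\delta^2\le\ve_0^2$, hence $|\delta|\le\ve_0$; so the chord never leaves the strip $\{0\le x_2-x_1^2\le 2\ve_0^2\}$, on which the determinant $e^{2x_1}(1-6u-2u^2)$ is still positive and $h''=-2<0$, so $B$ is concave along the chord. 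This enlargement device is exactly what the paper's Splitting Lemma (Lemma~\ref{splitting}) formalizes in Part~III for general BMO, and it is foreshadowed by the Part~II homework: the sharp candidate $B(x;\ve)$ fails the finite-difference inequality in $\Omega_\ve$ itself, while $B(x;\delta)$ with $\delta\ge\frac{3}{2\sqrt2}\ve$ does satisfy it.
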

An equivalent, integral form of the same assertion is the following
\begin{thmnonum}
There exists an absolute constant $\ve_0$ such that for any
$\varphi\in\BMO_\ve(J)$ with $\ve<\ve_0$ the inequality
$$
\av{e^\varphi}J\le c\,e^{\av{\varphi}J}
$$
holds with a constant $c=c(\ve)$ not depending on $\varphi$.
\end{thmnonum}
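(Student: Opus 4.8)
\medskip
\noindent\textbf{Proof plan.}\enspace The plan is to run the Bellman-function scheme of Section~1 for this new extremal problem. Since every dyadic interval is an interval, $\|\varphi\|_{\BMO^d(J)}\le\|\varphi\|_{\BMO(J)}$, so it is enough to treat $\varphi\in\BMO^d_\ve(J)$. For $x=(x_1,x_2)$ put
$$
\Bel(x)\df\sup\Big\{\av{e^\varphi}J\ \colon\ \av\varphi J=x_1,\ \av{\varphi^2}J=x_2,\ \|\varphi\|_{\BMO^d(J)}\le\ve\Big\}\,,
$$
on the domain $\Omega_\ve=\{x\colon x_1^2\le x_2\le x_1^2+\ve^2\}$: the left inequality is Cauchy--Schwarz and the right one is the $\BMO$ restriction applied to $I=J$. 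As in Section~1, $\Bel$ does not depend on $J$. Replacing $\varphi$ by $\varphi+t$ sends $(x_1,x_2)$ to $(x_1+t,\,x_2+2tx_1+t^2)$ and multiplies $\av{e^\varphi}J$ by $e^t$, whence $\Bel(x)=e^{x_1}g(x_2-x_1^2)$ for some $g$ on $[0,\ve^2]$; and since only constant test functions reach the boundary $x_2=x_1^2$ (equality in Jensen's inequality for $t\mapsto t^2$), $g(0)=1$.

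Next comes the main inequality. Splitting $J=J_+\cup J_-$, gluing near-optimal test functions $\varphi^\pm$ on $J_\pm$, and using the \emph{exact} identities $\av{e^\varphi}J=\tfrac12\av{e^{\varphi^+}}{J_+}+\tfrac12\av{e^{\varphi^-}}{J_-}$, $\av\varphi J=\tfrac12(x_1^++x_1^-)$, $\av{\varphi^2}J=\tfrac12(x_2^++x_2^-)$, together with the remark that the glued $\varphi$ is admissible precisely when $x=\tfrac12(x^++x^-)\in\Omega_\ve$, one obtains
$$
\Bel(x)\ge\tfrac12\bigl(\Bel(x^+)+\Bel(x^-)\bigr)
$$
for all such triples --- there is no ``gain'' term this time. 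Passing formally to the differential form and writing a smooth candidate as $B(x_1,x_2)=e^{x_1}g(x_2-x_1^2)$, the $x_1$-dependence cancels and the condition that the Hessian of $B$ be negative semidefinite reduces to the two scalar conditions $g''\le0$ and $(g-2g')g''\ge(g')^2$ on $[0,\ve^2]$. Following the usual heuristic --- an extremizer forces equality at every step of the Bellman induction --- I impose the degeneracy $(g-2g')g''=(g')^2$. Solving this ODE (the substitution $g'=\tfrac A2e^{r}$, $s=D-r^2$ makes it explicit; this is some work) produces the two-parameter family $g(s)=A(1-r)e^{r}$, $r=\sqrt{D-s}$, with $A>0$ and $D\ge\ve^2$, the branch $r>0$ being forced by $g''\le0$. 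The boundary condition $g(0)=1$ reads $A(1-\sqrt D)e^{\sqrt D}=1$, which requires $\sqrt D<1$; so the scheme needs $\ve^2\le D<1$, and here is where the threshold $\ve_0=1$ comes from. Minimizing the estimate $g(\ve^2)$ over $D\in[\ve^2,1)$ puts the optimum at $D=\ve^2$, giving
$$
B(x_1,x_2)=\frac{\bigl(1-\sqrt{\ve^2-x_2+x_1^2}\,\bigr)\,e^{\,x_1+\sqrt{\ve^2-x_2+x_1^2}}}{(1-\ve)\,e^{\ve}}\,,\qquad c(\ve)=\frac{e^{-\ve}}{1-\ve}\,.
$$

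Granting that this explicit $B$ satisfies the main inequality (see below), the rest follows the pattern of Lemma~\ref{Bellman induction BI}. By concavity $|I|B(x^I)\ge|I_+|B(x^{I_+})+|I_-|B(x^{I_-})$, so iterating down $n$ generations, $|J|B(x^J)\ge\sum_{I\in\D^n_J}|I|B(x^I)$; since $g$ is increasing with $g(0)=1$ we have $B(x^I)\ge e^{x^I_1}=e^{\av\varphi I}$, hence $|J|B(x^J)\ge\sum_{I\in\D^n_J}|I|\,e^{\av\varphi I}$. As $n\to\infty$ the right-hand side equals $|J|\,\av{e^{\varphi_n}}J$ with $\varphi_n=\mathbb E(\varphi\mid\D^n_J)\to\varphi$ a.e., so by Fatou its lower limit is at least $|J|\,\av{e^\varphi}J$. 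Therefore
$$
\av{e^\varphi}J\le B(x^J)=e^{\av\varphi J}\,g\bigl(\av{\varphi^2}J-\av\varphi J^2\bigr)\le e^{\av\varphi J}\sup_{[0,\ve^2]}g=\frac{e^{-\ve}}{1-\ve}\,e^{\av\varphi J}\,,
$$
which proves the theorem with $\ve_0=1$. (The weak-type John--Nirenberg inequality stated above then follows by applying this to $c_2(\varphi-\av\varphi J)$ for small $c_2$ and invoking Chebyshev's inequality.)

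The step I expect to be the real obstacle is verifying that the explicit $B$ obeys the finite-difference inequality $B(x)\ge\tfrac12(B(x^+)+B(x^-))$ for \emph{every} admissible triple. One is tempted to read this off from negative semidefiniteness of the Hessian, but $\Omega_\ve$ is not convex: the segment joining two points of $\Omega_\ve$ whose midpoint lies in $\Omega_\ve$ may leave $\Omega_\ve$ across the upper parabola, so pointwise concavity of $B$ is not by itself sufficient. The remedy is the standard one for Bellman functions on non-convex domains --- either check the inequality directly (the identity $g-2g'=-A\,r\,e^{r}$, which vanishes on the upper boundary $s=\ve^2$, keeps that computation manageable), or extend $B$ to a genuinely concave function on the convex hull $\{x_2\ge x_1^2\}$ of $\Omega_\ve$ and apply concavity there. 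I expect this verification, together with the ODE, to absorb essentially all of the work; everything else repeats Section~1 line by line.
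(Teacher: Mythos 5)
Your overall scheme is the paper's: set up the Bellman function $\Bel$ on $\Omega_\ve$, use homogeneity to reduce to one variable, derive the degenerate Monge--Amp\`ere ODE, and run the Bellman induction. Your ODE $(g-2g')g'' = (g')^2$ and its solution $g(s) = A(1-r)e^r$ with $r=\sqrt{D-s}$ are correct, and the formula for $B$ matches the paper's candidate. The use of Fatou's lemma together with $B(x^I)\ge e^{x_1^I}$ (from $g(0)=1$ and $g$ increasing) to pass to the limit is a genuine simplification over the paper's route, which proves the inequality first for bounded $\varphi$ via dominated convergence and then extends to all of $\BMO_\ve$ via a cut-off lemma; your argument sidesteps both.

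However, there is a real gap exactly where you expected one, and your proposed remedies do not close it. You observe correctly that $\Omega_\ve$ is not convex and that the segment $[x^-,x^+]$ may leave $\Omega_\ve$ across the upper parabola, so pointwise negative semidefiniteness of the Hessian does not imply the finite-difference inequality. You then say the remedy is ``either check the inequality directly \dots or extend $B$ to a genuinely concave function on $\{x_2\ge x_1^2\}$.'' But for the parameter value you chose, $D=\ve^2$, \emph{neither} works: $B(x;\ve)$ in fact fails the finite-difference main inequality on $\Omega_\ve$, so no concave extension to the convex hull exists and a direct check will return a counterexample. This is exactly the content of the paper's homework assignment at the end of Section~5: $B(x;\ve)$ does \emph{not} satisfy the dyadic main inequality in $\Omega_\ve$, whereas $B(x;\delta)$ does, provided $\delta\ge\tfrac{3}{2\sqrt2}\,\ve$. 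Your step ``minimizing $g(\ve^2)$ over $D$ puts the optimum at $D=\ve^2$'' is arithmetically correct but irrelevant: that minimum is taken over a family most of whose members are not supersolutions. Consequently your claimed constants $\ve_0=1$ and $c(\ve)=\tfrac{e^{-\ve}}{1-\ve}$ are not established by this argument.

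Two ways to repair it. (i) \emph{Dyadic, weaker constant.} Replace $D=\ve^2$ by $D=\delta^2$ with $\delta=\tfrac{3}{2\sqrt2}\,\ve$; then $B(x;\delta)$ does satisfy the dyadic main inequality and your Fatou-based induction goes through, yielding $c(\ve)=\tfrac{e^{-\delta}}{1-\delta}$ and $\ve_0=\tfrac{2\sqrt2}{3}$. This proves the stated theorem (which only asks for \emph{some} $\ve_0$ and $c(\ve)$), though not with the constants you wrote. (ii) \emph{Non-dyadic, sharp constant.} If you want $c(\ve)=\tfrac{e^{-\ve}}{1-\ve}$ and $\ve_0=1$ --- which are indeed sharp --- you must abandon the purely dyadic reduction and work as in Section~6: use the Splitting Lemma to choose, for each interval $I$, a non-dyadic split $I=I_-\cup I_+$ so that $[x^{I_-},x^{I_+}]\subset\Omega_\delta$ with $\delta>\ve$, apply local concavity of $B(\cdot;\delta)$ on $\Omega_\delta$, and finally let $\delta\downarrow\ve$. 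The dyadic reduction you opened with ($\|\varphi\|_{\BMO^d}\le\|\varphi\|_{\BMO}$), while correct, locks you into the dyadic main inequality and hence to the worse constant of option~(i).
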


We shall prove the theorem in this integral form and find the
sharp constant $c(\ve).$ Our Bellman function,
$$
\Bel(x;\ve)\;\;\df\sup_{\varphi\in\BMO_\ve(J)}\left\{\av{e^\varphi}J
\colon\av{\varphi}J=x_1,\;\av{\varphi^2}J=x_2\right\},
$$
is well-defined on the domain
$$
\Omega_\ve\df\left\{x=(x_1,x_2)\colon x_1^2\le x_2\le x_1^2+\ve^2\right\}\,.
$$

First, we will consider the dyadic problem and deduce the main
inequality for the dyadic Bellman function.

\begin{lemma}[Main inequality]
For every pair of points $x^\pm$ from $\Omega_\ve$ such that their
mean $x=(x^++x^-)/2$ is also in $\Omega_\ve$\textup, the following
inequality holds
\eq[mainJN]{
\Bel(x)\ge\frac{\Bel(x^+)+\Bel(x^-)}2\,.
}
\end{lemma}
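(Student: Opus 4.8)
The plan is to follow, essentially verbatim, the strategy used for the main inequality in the Buckley problem: I would construct a test function on an interval $J$ by concatenating near-optimal test functions on its two halves. The only structural difference from the Buckley case is that the functional $\av{e^\varphi}{J}$ produces no ``top-level'' contribution when $J$ is split in two, so no additive term appears on the right-hand side of~\eqref{mainJN}.

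Concretely, I would fix an interval $J$ and write $J=J_-\cup J_+$. Given $x^\pm\in\Omega_\ve$ with $x=(x^++x^-)/2\in\Omega_\ve$ and a small $\eta>0$, I would choose for each sign a function $\varphi^\pm\in\BMO^d_\ve(J_\pm)$ with
$$
\av{\varphi^\pm}{J_\pm}=x^\pm_1,\qquad\av{(\varphi^\pm)^2}{J_\pm}=x^\pm_2,\qquad
\av{e^{\varphi^\pm}}{J_\pm}\ge\Bel(x^\pm)-\eta ,
$$
and let $\varphi$ be the function on $J$ that equals $\varphi^+$ on $J_+$ and $\varphi^-$ on $J_-$. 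Averaging over the two halves (which have equal length $\tfrac12|J|$) yields the exact identities $\av{\varphi}{J}=\tfrac12(x^+_1+x^-_1)=x_1$ and $\av{\varphi^2}{J}=\tfrac12(x^+_2+x^-_2)=x_2$, as well as
$$
\av{e^\varphi}{J}=\tfrac12\av{e^{\varphi^+}}{J_+}+\tfrac12\av{e^{\varphi^-}}{J_-}
\ge\frac{\Bel(x^+)+\Bel(x^-)}2-\eta .
$$
Once $\varphi$ is shown to be admissible for $x$, i.e. $\varphi\in\BMO^d_\ve(J)$, I would take the supremum over all admissible test functions and let $\eta\to0$ to obtain~\eqref{mainJN}.

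The one point requiring care --- and the step I expect to be the main obstacle --- is the admissibility check. Every dyadic subinterval $I\in\D_J$ falls into exactly one of three classes: $I\in\D_{J_+}$, $I\in\D_{J_-}$, or $I=J$. In the first two classes the oscillation bound $\av{\varphi^2}{I}-\av{\varphi}{I}^2\le\ve^2$ holds because $\varphi^+\in\BMO^d_\ve(J_+)$, respectively $\varphi^-\in\BMO^d_\ve(J_-)$. For $I=J$ one has $\av{\varphi^2}{J}-\av{\varphi}{J}^2=x_2-x_1^2\le\ve^2$ precisely because $x\in\Omega_\ve$; this is exactly where the hypothesis that the midpoint remains in the domain is used, just as in the Buckley lemma. (In the non-dyadic formulation one would additionally need to bound $\av{\varphi^2}{I}-\av{\varphi}{I}^2$ for intervals $I\subset J$ straddling the midpoint of $J$, which the concatenation does not automatically control --- this is the reason the dyadic problem is treated first.) With admissibility established, the argument closes as indicated above.
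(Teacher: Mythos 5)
Your proof is correct and follows essentially the same path as the paper's: concatenate near-optimal test functions on the two halves, check admissibility case by case (with the hypothesis $x\in\Omega_\ve$ handling $I=J$), and pass to the supremum. Your explicit three-case admissibility check and the parenthetical remark about why the dyadic setting is needed are accurate elaborations of what the paper states more tersely.
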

\begin{proof}
The proof repeats almost verbatim the proof of the main inequality for the
Buckley Bellman function. We split the integral in the definition of
$\Bel$ into two parts, the integral over $J_+$ and the one over $J_-:$
$$
\int_J e^{\varphi(s)}\,ds=\int_{J_+}\!\!e^{\varphi(s)}\,ds+
\int_{J_-}\!\!e^{\varphi(s)}\,ds\,.
$$
Now we choose such functions $\varphi^\pm$ on the intervals $J_\pm$
that they almost give us the supremum in the definition of $\Bel(x^\pm),$ i.e.
$$
\frac1{|J_\pm|}\int_{J_\pm}\!\!e^{\varphi(s)}\,ds\ge\Bel(x^\pm)-\eta,
$$
for a fixed small $\eta>0$. Then for the function $\varphi$ on $J,$
defined as $\varphi^+$ on $J_+$ and $\varphi^-$ on $J_-,$ we obtain
the inequality
\eq[02JN]{
\frac1{|J|}\int_J e^{\varphi(s)}\,ds\ge
\frac{\Bel(x^+)+\Bel(x^-)}2-\eta\,.
}
Observe that the compound function $\varphi$ is an admissible test
function corresponding to the point $x.$ Indeed, $x^\pm=x^{J_\pm}$
and by construction $\varphi^\pm\in\BMO^d_\ve(J_\pm);$ therefore,
the function $\varphi$ satisfies the inequality $\av{\varphi^2}I-
\av{\varphi}I^2\le \ve^2$ for all $I\in\D_{J_+},$ since $\varphi^+$
does, and for all $I\in\D_{J_-},$ since $\varphi^-$ does. Lastly,
$\av{\varphi^2}J-\av{\varphi}J^2\le\ve^2$, because, by assumption,
$x\in\Omega_\ve$.

We can now take supremum in \eqref{02JN} over all admissible functions
$\varphi$ which yields
$$
\Bel(x)\ge\frac{\Bel(x^+)+\Bel(x^-)}2-\eta\,,
$$
which proves the main inequality because $\eta$ is arbitrarily small.
\end{proof}

As in the case of the Buckley inequality, the next our step is to
derive a boundary condition for $\Bel.$

\begin{lemma}[Boundary condition]
\eq[bcJN]{
\Bel(x_1,x_1^2)=e^{x_1}\,.
}
\end{lemma}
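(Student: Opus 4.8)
The plan is to imitate, essentially verbatim, the proof of the boundary condition for the Buckley function (Lemma~\ref{bcBI}): on the relevant piece of $\partial\Omega_\ve$ the class of admissible test functions collapses to a single function — here the appropriate constant — and the value of $\Bel$ can be read off directly.

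First I would fix a boundary point $x=(x_1,x_1^2)\in\Omega_\ve$ and take any admissible $\varphi$, so $\varphi\in\BMO^d_\ve(J)$ with $\av{\varphi}J=x_1$ and $\av{\varphi^2}J=x_1^2$. The key observation is that $\av{\varphi^2}J-\av{\varphi}J^2=\frac1{|J|}\int_J(\varphi(s)-x_1)^2\,ds$ is exactly the variance of $\varphi$ over $J$; equivalently, $x_2-x_1^2$ is the squared deviation of $\varphi$ from its mean. The defining constraint $\av{\varphi^2}J=x_1^2=\av{\varphi}J^2$ forces this variance to vanish, hence $\varphi(s)=x_1$ for almost every $s\in J$.

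Next I would note that the constant function $\varphi\equiv x_1$ is indeed admissible: its oscillation over every (dyadic) subinterval is $0\le\ve$, so it lies in $\BMO^d_\ve(J)$, and trivially $\av{\varphi}J=x_1$, $\av{\varphi^2}J=x_1^2$. Therefore the supremum in the definition of $\Bel(x_1,x_1^2)$ is taken over the one-element set consisting of the constant $x_1$, and its value is $\av{e^{x_1}}J=e^{x_1}$, which is precisely~\eqref{bcJN}.

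I do not expect any genuine obstacle here: the whole content is the degeneracy of the variance functional on the parabola $x_2=x_1^2$, mirroring exactly how equality in Jensen's inequality degenerates on the curve $x_2=\log x_1$ in the Buckley problem. The only point worth stating carefully is the identification of $x_2-x_1^2$ with the variance of the test function, so that a point of this part of the boundary corresponds to a constant $\varphi$ and nothing else.
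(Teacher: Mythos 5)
Your proof is correct and takes essentially the same approach as the paper: both observe that on the parabola $x_2=x_1^2$ the constraints force the variance $\av{\varphi^2}J-\av\varphi J^2$ to vanish, so the only admissible test function is the constant $\varphi\equiv x_1$, giving $\Bel(x_1,x_1^2)=e^{x_1}$. The paper phrases the degeneracy as "equality in the H\"older inequality," but that is just another name for the same vanishing-variance observation you spelled out.
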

\begin{proof}
The function $\varphi(s)=x_1$ is the only test function corresponding
to the point $x=(x_1,x_1^2)$, because the equality in the H\"older
inequality $x_2\ge x_1^2$ occurs only for constant functions.
Hence, $e^\varphi=e^{x_1}.$
\end{proof}

Now we are ready to describe super-solutions as functions verifying
the main inequality and the boundary conditions.

\begin{lemma}[Bellman induction]
If $B$ is a continuous function on the domain $\Omega_\ve,$
satisfying the main inequality~\eqref{mainJN} for any pair
$x^\pm$ of points from $\Omega_\ve$ such that $x\df\frac{x^++x^-}2\in\Omega_\ve,$
as well as the boundary condition~\eqref{bcJN}\textup, then $\Bel(x)\le B(x).$
\end{lemma}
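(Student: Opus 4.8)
The plan is to show that any function $B$ with the stated properties dominates the test functional at every point, i.e.\ $\av{e^\varphi}J\le B(x)$ whenever $\varphi\in\BMO_\ve(J)$ satisfies $\av\varphi J=x_1$ and $\av{\varphi^2}J=x_2$; taking the supremum over such $\varphi$ then gives $\Bel(x)\le B(x)$. I would first treat a \emph{bounded} $\varphi\in\BMO^d_\ve(J)$. For $I\in\D_J$ set $x^I=(\av\varphi I,\av{\varphi^2}I)$. Two elementary facts drive the Bellman induction: averaging over the two halves of $I$ gives $\av\varphi I=\tfrac12(\av\varphi{I_+}+\av\varphi{I_-})$, and the same for $\varphi^2$, so $x^I=\tfrac12(x^{I_+}+x^{I_-})$; and each $x^I$ lies in $\Omega_\ve$, since $\av\varphi I^2\le\av{\varphi^2}I$ is Jensen's inequality while $\av{\varphi^2}I-\av\varphi I^2\le\ve^2$ is exactly the condition $\varphi\in\BMO^d_\ve(J)$. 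Hence the main inequality~\eqref{mainJN} applies to the triple $x^{I_+},x^{I_-},x^I$, and multiplying it by $|I|$ (and using $|I_\pm|=|I|/2$) yields
\[
|I|\,B(x^I)\ \ge\ |I_+|\,B(x^{I_+})+|I_-|\,B(x^{I_-}).
\]

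Next I would iterate this inequality exactly as in the proof of Lemma~\ref{Bellman induction BI}, applying it first to $J$, then to every $I\in\D^1_J$, then to every $I\in\D^2_J$, and so on; the telescoping gives, for each $n$,
\[
|J|\,B(x)\ \ge\ \sum_{I\in\D^n_J}|I|\,B(x^I)\ =\ \int_J B\bigl(x^{I_n(s)}\bigr)\,ds,
\]
where $I_n(s)\in\D^n_J$ denotes the $n$-th generation dyadic interval containing $s$. It remains to pass to the limit $n\to\infty$. By the dyadic Lebesgue differentiation theorem applied to $\varphi$ and to $\varphi^2$, for almost every $s\in J$ one has $\av\varphi{I_n(s)}\to\varphi(s)$ and $\av{\varphi^2}{I_n(s)}\to\varphi(s)^2$, so $x^{I_n(s)}\to(\varphi(s),\varphi(s)^2)$, a point of the lower boundary $x_2=x_1^2$ of $\Omega_\ve$; by continuity of $B$ together with the boundary condition~\eqref{bcJN} this forces $B(x^{I_n(s)})\to e^{\varphi(s)}$ a.e. Since $\varphi$ is bounded, all the points $x^{I_n(s)}$ stay in the compact set $\{|x_1|\le\|\varphi\|_\infty,\ x_1^2\le x_2\le x_1^2+\ve^2\}$, on which $B$ is bounded, so the dominated convergence theorem yields $\int_J B(x^{I_n(s)})\,ds\to\int_J e^{\varphi(s)}\,ds$, whence $\av{e^\varphi}J\le B(x)$.

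To remove the boundedness hypothesis, I would truncate. Given $\varphi\in\BMO_\ve(J)$ with $\av\varphi J=x_1$, $\av{\varphi^2}J=x_2$, set $\varphi_N=\max\{-N,\min\{N,\varphi\}\}$. Truncation does not increase the $L^2$-based $\BMO$ seminorm on any interval, as one sees from the representation $\av{\varphi^2}I-\av\varphi I^2=\frac1{2|I|^2}\int_I\int_I|\varphi(s)-\varphi(t)|^2\,ds\,dt$, so $\varphi_N\in\BMO^d_\ve(J)$; the bounded case then gives $\av{e^{\varphi_N}}J\le B(x^{(N)})$ with $x^{(N)}=(\av{\varphi_N}J,\av{\varphi_N^2}J)\in\Omega_\ve$. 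As $N\to\infty$ we have $\varphi_N\to\varphi$ a.e.\ with $|\varphi_N|\le|\varphi|\in L^1(J)$ and $\varphi_N^2\le\varphi^2\in L^1(J)$ (the latter since the definition of $\Bel$ restricts to $\varphi$ with $\av{\varphi^2}J=x_2<\infty$), so $x^{(N)}\to x$ and hence $B(x^{(N)})\to B(x)$ by continuity; Fatou's lemma applied to the nonnegative functions $e^{\varphi_N}$ then gives $\av{e^\varphi}J\le B(x)$. Taking the supremum over all admissible $\varphi$ finishes the proof.

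I expect the one genuinely delicate point to be the passage to the limit $n\to\infty$: the telescoping step is purely algebraic and uses only~\eqref{mainJN}, but identifying $\lim_n\int_J B(x^{I_n(s)})\,ds$ with $\int_J e^\varphi$ requires both that the running points $x^{I_n(s)}$ reach the boundary of $\Omega_\ve$ (Lebesgue differentiation together with~\eqref{bcJN}) and enough integrability to move the limit inside the integral, the latter being precisely what the truncation argument supplies for unbounded test functions.
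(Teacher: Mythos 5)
Your proof is correct, and the core of it (telescoping the main inequality down the dyadic tree, Lebesgue differentiation to push the Bellman points to the lower boundary, dominated convergence using boundedness plus the boundary condition $B(x_1,x_1^2)=e^{x_1}$) is exactly the paper's argument. Where you diverge is in removing the boundedness hypothesis, and the comparison is worth recording. The paper invokes a Cut-off Lemma proved via an explicit algebraic identity comparing $\av{\varphi^2}I-\av\varphi I^2$ with the same quantity for $C_d\varphi$; you instead argue that truncation is a $1$-Lipschitz contraction and read off monotonicity from the representation $\av{\varphi^2}I-\av\varphi I^2=\frac1{2|I|^2}\int_I\int_I|\varphi(s)-\varphi(t)|^2\,ds\,dt$, which is a cleaner and more conceptual justification of the same fact. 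The paper then does a two-stage limit: first one-sided cut-offs $\varphi_{-n,\infty}$ (for $\varphi$ bounded above, using dominated convergence with majorant $e^\varphi$), then $\varphi_{-\infty,n}$ and monotone convergence; you do a single two-sided truncation and close with Fatou's lemma, noting $x^{(N)}\to x$ via the dominated convergence of $\av{\varphi_N}J$ and $\av{\varphi_N^2}J$ and invoking continuity of $B$. Both routes are sound; yours is shorter because Fatou asks for nothing but nonnegativity, whereas the paper's monotone-convergence step gets equality in the limit at the cost of the extra intermediate case. The one thing you are implicitly using that is worth making explicit is that $B$ is continuous at $x$ even though $x$ may lie on the upper boundary $x_2=x_1^2+\ve^2$ — continuity on the closed domain $\Omega_\ve$ is assumed in the hypothesis, so this is fine, but note that $x^{(N)}$ approaches $x$ from \emph{inside} $\Omega_\ve$ precisely because of the contraction property you proved.
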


\begin{proof}
Fix a bounded function $\varphi\in\BMO_\ve(J)$. By the main
inequality we have
$$
|J|B(x^J)|\ge|J_+|B(x^{J_+})|+|J_-|B(x^{J_-})|
\ge\sum_{I\in\D_{\!J}^n}|I|B(x^I)=\int_JB(x^{(n)}(s))\,ds\,,
$$
where $x^{(n)}(s)=x^I,$ when $s\in I,$ $I\in\D_{\!J}^n.$ (Recall that
$\D_{\!J}^n$ stands for the set of subintervals of $n$-th generation.)
By the Lebesgue differentiation theorem we have
$x^{(n)}(s)\to(\varphi(s),\varphi^2(s))$ almost everywhere. Now,
we can pass to the limit in this inequality as $n\to\infty$. Since
$\varphi$ is assumed to be bounded, $x^{(n)}(s)$ runs in a bounded
--- and, therefore, compact --- subdomain of $\Omega_\ve.$ Since $B$
is continuous, it is bounded on any compact set and so, by the
Lebesgue dominated convergence theorem, we can pass to the limit
in the integral using the boundary condition~\eqref{bcJN}:
\eq[upper]{
|J|B(x^J)\ge\int_JB(\varphi(s),\varphi^2(s))\,ds=
\int_Je^{\varphi(s)}ds=|J|\av{e^\varphi}J\,.
}

To complete the proof of the lemma, we need to pass from bounded
to arbitrary \BMO~test functions. To this end, we will use the
following result:
\begin{lemma}[Cut-off Lemma]
\label{norm}
Fix $\varphi\in\BMO(J)$ and two real numbers $c,d$ such that $c<d.$
Let $\varphi_{c,d}$ be the cut-off of $\varphi$ at heights $c$ and $d:$
\eq[cutoff]{
\varphi_{c,d}(s)=
\begin{cases}
\ c,&if~\varphi(s)\le c;\\
\varphi(s),&if~c<\varphi(s)<d;\\
\ d,&if~\varphi(s)\ge d.
\end{cases}
}
Then
$$
\av{\varphi_{c,d}^2}I-\av{\varphi_{c,d}}I^2\le
\av{\varphi^2}I-\av\varphi I^2,\quad\forall I,\ I\subset J,
$$
and\textup, consequently\textup,
$$
\|\varphi_{c,d}\|_{\BMO}\le\|\varphi\|_{\BMO}.
$$
\end{lemma}

\begin{proof}
First, let us note that it is sufficient to prove this lemma for
a one-sided cut, for example, for $c=-\infty.$ We then get the full
statement by applying this argument twice. Indeed, if we denote by
$C_d\varphi$ the cut-off of $\varphi$ from above at height $d,$
i.e. $C_d\varphi=\varphi_{-\infty,d},$ then
$\varphi_{c,d}=-C_{-c}(-C_d\varphi$).

Take a measurable subset $I\subset J$ and let $I_1=\{s\in I\colon
\varphi(s)<d\}$ and $I_2=\{s\in I\colon \varphi(s)\ge d\}.$ Let
$\beta_k=|I_k|/|I|, k=1,2.$
We have the following identity:
\begin{align*}
\bigl[\av{\varphi^2}I&-\av\varphi I^2\bigr]
-\bigl[\av{(C_d\varphi)^2}I-\av{C_d\varphi}I^2\bigr]
\\
=&\beta_2\bigl[\av{\varphi^2}{I_2}-\av\varphi{I_2}^2\bigr]+
\beta_1\beta_2\bigl[\av\varphi{I_2}-d\bigr]\bigl[\av\varphi{I_2}+d-
2\av\varphi{I_1}\bigr],
\end{align*}
which proves the lemma, because $\av\varphi{I_1}\!\!\le d\le
\av\varphi{I_2}.$
\end{proof}

Now, let $\varphi\in\BMO_\ve(J)$ be a function bounded from above.
Then, by the above lemma, $\varphi_n\df\varphi_{-n,\infty}\in\BMO_\ve(J),$
and, according to~\eqref{upper}, we have
$$
B(\av{\varphi_n}J,\av{\varphi_n^2}J)\ge\av{e^{\varphi_n}}J\,.
$$
Since $e^\varphi$ is a summable majorant for $e^{\varphi_n}$ and $B$
is continuous, we can pass to the limit and obtain the
estimate~\eqref{upper} for any function $\varphi$ bounded from above.
Finally, we repeat this approximation procedure for an arbitrary
$\varphi$. Now, we take $\varphi_n=\varphi_{-\infty,n}$ and use the
monotone convergence theorem to pass to the limit in the right-hand
side of the inequality.

So, we have proved the inequality
$$
B(x^J)\ge\av{e^\varphi}J
$$
for arbitrary $\varphi\in\BMO_\ve(J).$ Taking supremum over all admissible
test functions corresponding to the point $x,$ we get $B(x)\ge\Bel(x).$
\end{proof}

As before, we pass from the finite-difference inequality~\eqref{mainJN}
to the infinitesimal one:
\eq[matrJN]{
\frac{d^2B}{dx^2}\df\left(
\begin{matrix}
\ds\frac{\partial^2 B}{\partial x_1^2}&\ &
\ds\frac{\partial^2 B}{\;\partial x_1\partial x_2}
\\
&&
\\
\ds\frac{\partial^2 B}{\;\partial x_1\partial x_2}&\ &
\ds\frac{\partial^2 B}{\partial x_2^2}
\end{matrix}
\right)\le0\,,
}
and we will require this Hessian matrix to be degenerate, i.e.
$\det(\frac{d^2B}{dx^2})=0.$ Again, to solve this PDE, we use a
homogeneity property to reduce the problem to an ODE.

\begin{lemma}[Homogeneity]
There exists a function $G$ on the interval $[0,\ve^2]$ such that
$$
\Bel(x;\ve)=e^{x_1}G(x_2-x_1^2)\,,\qquad G(0)=1\,.
$$
\end{lemma}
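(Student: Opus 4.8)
The plan is to exploit the translation symmetry $\varphi\mapsto\varphi+\tau$ of the $\BMO$ setting, exactly as homogeneity for the Buckley function came from the scaling $w\mapsto\tau w$ (here the law is additive rather than multiplicative, which is natural since $\varphi$ plays the role of $\log w$). Fix a test function $\varphi$ on $J$ with $\av\varphi J=x_1$, $\av{\varphi^2}J=x_2$, and for a real constant $\tau$ set $\tilde\varphi=\varphi+\tau$. First I would record three elementary facts: (i) $\av{\tilde\varphi^2}I-\av{\tilde\varphi}I^2=\av{\varphi^2}I-\av\varphi I^2$ for every $I\subset J$, so $\tilde\varphi\in\BMO_\ve(J)$ if and only if $\varphi\in\BMO_\ve(J)$; (ii) the point attached to $\tilde\varphi$ is $\tilde x=(x_1+\tau,\ x_2+2\tau x_1+\tau^2)$, and a one-line computation gives $\tilde x_2-\tilde x_1^2=x_2-x_1^2$, so the combination $x_2-x_1^2$ is a translation invariant; (iii) the objective transforms by a scalar, $\av{e^{\tilde\varphi}}J=e^{\tau}\av{e^\varphi}J$.

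Since $\varphi$ and $\tilde\varphi$ run over the whole class $\BMO_\ve(J)$ simultaneously, taking the supremum in the definition of $\Bel$ over all admissible $\varphi$ corresponding to $x$ is the same as taking it over all admissible $\tilde\varphi$ corresponding to $\tilde x$, and by (iii) the two suprema differ precisely by the factor $e^{\tau}$. This yields the functional equation
$$
\Bel(x_1+\tau,\ x_2+2\tau x_1+\tau^2;\ve)=e^{\tau}\,\Bel(x_1,x_2;\ve),
$$
valid for every $\tau$ for which the left-hand argument still lies in $\Omega_\ve$; by the invariance (ii) of $x_2-x_1^2$ this membership is automatic. Choosing $\tau=-x_1$ sends the first coordinate to $0$ and the second to $x_2-x_1^2$, so
$$
\Bel(x_1,x_2;\ve)=e^{x_1}\,\Bel(0,\ x_2-x_1^2;\ve).
$$
Setting $G(u)\df\Bel(0,u;\ve)$ gives the claimed representation. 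Note that $G$ is defined exactly on $[0,\ve^2]$: the slice $\{x_1=0\}$ of $\Omega_\ve$ is $\{0\le x_2\le\ve^2\}$, and this is precisely the range of $x_2-x_1^2$ over $\Omega_\ve$.

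It remains to check the normalization. We have $G(0)=\Bel(0,0;\ve)$, and the point $(0,0)$ lies on the lower boundary $x_2=x_1^2$ of $\Omega_\ve$, so the boundary condition~\eqref{bcJN} gives $\Bel(0,0;\ve)=e^0=1$, that is $G(0)=1$. I do not expect a real obstacle in this argument; the only points requiring any care are the bookkeeping in (ii) and the verification that the translated test function stays in $\BMO_\ve$ — but the latter is immediate from the translation invariance of the $\BMO$ seminorm, which is what makes this proof slightly cleaner than its multiplicative analogue for Buckley's inequality.
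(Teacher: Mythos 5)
Your argument is the same as the paper's: exploit the additive symmetry $\varphi\mapsto\varphi+\tau$, observe that the Bellman point transforms to $(x_1+\tau,\,x_2+2\tau x_1+\tau^2)$ while $\av{e^{\tilde\varphi}}J=e^\tau\av{e^\varphi}J$, and specialize $\tau=-x_1$ to get $\Bel(x)=e^{x_1}\Bel(0,x_2-x_1^2)$. Your write-up is slightly more explicit — in particular, you spell out the translation-invariance of the $\BMO$ seminorm and verify $G(0)=1$ via the boundary condition, which the paper leaves implicit — but the route is identical.
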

\begin{proof}
Let $\varphi$ be an arbitrary test function and $x=(\av{\varphi}J,\av{\varphi^2}J)$ its
Bellman point on $J.$ Then the function $\tilde\varphi\df\varphi+\tau$ is also a test function
with the same norm, and its Bellman point is $\tilde x=(x_1+\tau, x_2+2\tau x_1+\tau^2)$.
Therefore,
$$
\Bel(\tilde x)=\sup_{\tilde\varphi}\av{e^{\tilde\varphi}}J=
e^\tau\sup_{\varphi}\av{e^\varphi}J=e^\tau\Bel(x)\,.
$$
Choosing $\tau=-x_1$ we get
$$
\Bel(x)=e^{-\tau}\Bel(x_1+\tau,x_2+2\tau x_1+\tau^2)=
e^{x_1}\Bel(0,x_2-x_1^2)\,.
$$
Setting $G(t)=\Bel(0,t)$ completes the proof.
\end{proof}
Since $G>0$, we can introduce $g(t)=\log G(t)$ and look for a
function $B$ of the form
$$
B(x_1,x_2)=e^{x_1+g(x_2-x_1^2)}\,.
$$
By direct calculation, we get
\begin{align*}
\frac{\partial^2 B}{\partial x_1^2}=&
\left(1-4x_1g'+4x_1^2(g')^2-2g'+4x_1^2g''\right)B\,,
\\
\frac{\partial^2 B}{\;\partial x_1\partial x_2}=&
\left(g'-2x_1(g')^2-2x_1g''\right)B\,,
\\
\frac{\partial^2 B}{\partial x_2^2}=&\left((g')^2+g''\right)B.
\end{align*}
The partial differential equation $\det(\frac{d^2B}{dx^2})=0$ then
turns into the following ordinary differential equation:
$$
\left(1-4x_1g'+4x_1^2(g')^2-2g'+4x_1^2g''\right)\left((g')^2+g''\right)
=\left(g'-2x_1(g')^2-2x_1g''\right)^2,
$$
which reduces to
$$
g''-2g'g''-2(g')^3=0\,.
$$
Dividing by $2(g')^3$ (since we are not interested in constant
solutions), we get
$$
\left(\frac1{g'}-\frac1{4(g')^2}\right)'=1\,,
$$
which yields
$$
\frac1{g'}-\frac1{4(g')^2}=t+\const
$$
or, equivalently,
$$
-\left(1-\frac1{2g'}\right)^2=t+\const,\qquad\forall s\in[0,\ve^2]\,.
$$
Since the left-hand side is non-positive, the constant cannot be
greater than $-\ve^2$. Let us denote it by $-\delta^2,$ where
$\delta\ge\ve.$

Thus, we have two possible solutions:
$$
1-\frac1{2g'_\pm}=\pm\sqrt{\delta^2-t}\,.
$$
Using the boundary condition $g(0)=0$, we obtain
$$
g_\pm(t)=\frac12\int_0^t\frac{ds}{1\mp\sqrt{\delta^2-s}}=
\log\frac{1\mp\sqrt{\delta^2-t}}{1\mp\delta} \pm\sqrt{\delta^2-t}\mp\delta\,.
$$
This yields two solutions for $B:$
$$
B_\pm(x)=\frac{1\mp\sqrt{\delta^2-x_2+x_1^2}}{1\mp\delta}
\exp\left\{x_1\pm\sqrt{\delta^2-x_2+x_1^2}\mp\delta\right\}\,.
$$
\bigskip
{\bf Homework assignment.}

{\small\sl
\begin{enumerate}
\item
{\color{blue}
Check that the quadratic form of the Hessian is:
$$
\sum_{i,j=1}^2\!\!\frac{\partial^2 B_\pm}{\;\partial x_i\partial x_j}
\Delta_i\Delta_j
=\mp\frac{\left(\!\big(x_1\!\pm\!\sqrt{\delta^2\!-\!x_2\!+\!x_1^2}\big)
\Delta_1\!\!-\!\frac12\Delta_2\right)^2\!\!} {\sqrt{\delta^2-x_2+x_1^2}
(1\mp\delta)}
\exp\!\left\{x_1\!\pm\!\sqrt{\delta^2\!-\!x_2\!+\!x_1^2}\mp\delta\right\}.
$$
}
\item
{\color{blue}
Find the extremal trajectories along which the Hessian  degenerates.
}
\end{enumerate}
}

\section{Homogeneous Monge--Amp\`ere equation}

Now, we change the subject of our consideration for a while and look
for the solutions of the equation
\eq[MA]{
B_{x_1x_1}B_{x_2x_2}=(B_{x_1x_2})^2
}
in a general setting.

Linear functions always satisfy~\eqref{MA}. Since we are looking for
the smallest possible concave function $B$, it always will be linear, if
a linear function satisfies the required boundary conditions. It is
a simple case, and in what follows we assume that $B$ is not linear.
This means that in each point $x$ of the domain there exists a unique
(up to a scalar coefficient) vector, say $\Theta(x)$, from the kernel
of the matrix $\frac{d^2\!B}{dx^2}$.

Let us check that functions $B_{x_i}$ are constant along the vector
field $\Theta$. The tangent vector field to the level set
$f(x_1,x_2)=\const$ has the form
$\left(\begin{matrix}
-f_{x_2}\\f_{x_1}
\end{matrix}\right)$
(it is orthogonal to $\grad f=
\left(\begin{matrix}
f_{x_1}\\f_{x_2}
\end{matrix}\right)$).
Thus, we need to check that the both vectors
$\left(\begin{matrix}
-(B_{x_i})_{x_2}\\(B_{x_i})_{x_1}
\end{matrix}\right)$ are in the kernel of the Hessian (i.e.
proportional the kernel vector $\Theta$). This is a direct consequence
of~\eqref{MA}. For example, for $i=1$ we have:

$$
\left(\begin{matrix}
B_{x_1,x_1}&B_{x_1x_2}\\B_{x_2x_1}&B_{x_2x_2}
\end{matrix}\right)
\left(\begin{matrix}
-(B_{x_1})_{x_2}\\(B_{x_1})_{x_1}
\end{matrix}\right)
=\left(\begin{matrix}
-B_{x_1x_1}B_{x_1x_2}+B_{x_1x_2}B_{x_1x_1}\\
-B_{x_2x_1}B_{x_1x_2}+B_{x_2x_2}B_{x_1x_1}
\end{matrix}\right)=0.
$$
\smallskip

If we parameterize the integral curves of the field $\Theta$ by some
parameter $s$ we can write $B_{x_i}\df t_i(s)$, $s=s(x_1,x_2)$. Any
$B_{x_i}$ that is not identically constant can itself be taken as
$s$. However, usually it is more convenient to parameterize the integral
curves by some other parameter with a clear geometrical meaning.

Now, we check that the function $t_0\df B-x_1t_1-x_2t_2$ is also
constant along the integral curves. Since
$$
-\frac{\partial t_0}{\partial x_2}=
-B_{x_2}+x_1\frac{\partial t_1}{\partial x_2}+
x_2\frac{\partial t_2}{\partial x_2}+t_2=
x_1B_{x_1x_2}+x_2B_{x_2x_2}
$$
and
$$
\frac{\partial t_0}{\partial x_1}=
B_{x_1}-t_1-x_1\frac{\partial t_1}{\partial x_1}-
x_2\frac{\partial t_2}{\partial x_1}=
-x_1B_{x_1x_1}-x_2B_{x_1x_2},
$$
we have
$$
\left(\begin{matrix}
-(t_0)_{x_2}\\\;\;(t_0)_{x_1}
\end{matrix}\right)=-x_1\left(\begin{matrix}
-(t_1)_{x_2}\\\;\;(t_1)_{x_1}
\end{matrix}\right)-x_2\left(\begin{matrix}
-(t_2)_{x_2}\\\;\;(t_2)_{x_1}
\end{matrix}\right)\in\Ker\frac{d^2\!B}{dx^2}.
$$

So, we have proved that in the representation
\eq[solMA]{
B=t_0+x_1t_1+x_2t_2
}
of a solution of the homogeneous Monge--Amp\`ere equation, the
coefficients $t_i$ are constant along the vector field generated
by the kernel of the Hessian. Now we prove that the integral curves
of this vector field are in fact straight lines given by the equation
\eq[extrem]{
dt_0+x_1dt_1+x_2dt_2=0\,.
}
This is, indeed, the equation of a straight line, because all the
differentials are constant along the trajectory. In a parametrization of the trajectories is chosen, this equation can
be rewritten as a usual linear equation with constant coefficients. For example,
let us take $s=t_0$; then~\eqref{extrem} turns into
$$
1+x_1\frac{dt_1}{dt_0}+x_2\frac{dt_2}{dt_0}=0\,,
$$
where the coefficients $\frac{dt_i}{dt_0}$, being functions of $t_0$,
are constant on each trajectory.

Now, let us deduce equation~\eqref{extrem}. On one hand,
\eq[d1]{
dB=B_{x_1}dx_1+B_{x_2}dx_2=t_1dx_1+t_2dx_2\,.
}
On the other hand, from representation~\eqref{solMA} we have
\eq[d2]{
dB=dt_0+t_1dx_1+x_1dt_1+t_2dx_2+x_2dt_2\,.
}
A comparison of~\eqref{d1} and~\eqref{d2} yields~\eqref{extrem}.

\bigskip

More details about solutions of the homogeneous Monge--Amp\`ere equation, together
with an example of its application to the John--Nirenberg inequality, can be found
in~\cite{VaVo2}. In the following section, we just consider this alternative
method of finding a candidate for the role of the Bellman function for the
integral John--Nirenberg inequality.

\section{John--Nirenberg inequality, Part II}
Let us now re-solve the Monge--Amp\`ere boundary value problem for
the John--Nirenberg inequality using the method described in the
previous section. We are looking for a solution of the form
$B(x)=t_0+x_1t_1+x_2t_2$ satisfying the boundary condition
\eq[bc]{
B(x_1,x_1^2)=e^{x_1}
}
and the homogeneity condition
$$
B(x_1+\tau,x_2+2\tau x_1+\tau^2)=e^\tau B(x)\,.
$$
This time, instead of using this identity to reduce the number of
variables, we differentiate it with respect to $\tau,$
$$
\frac{\partial B}{\partial x_1}+
(2x_1+2\tau)\frac{\partial B}{\partial x_2}=e^\tau B(x)\,,
$$
and set $\tau=0:$
$$
t_1+2x_1t_2=t_0+x_1t_1+x_2t_2\,.
$$
Thus, we obtain an equation of a straight line:
\eq[line]{
(t_0-t_1)+x_1(t_1-2t_2)+x_2t_2=0\,.
}
Since our $B$ cannot be a linear function (a linear function cannot
satisfy the boundary condition), we have only one extremal line
passing through a given point. Therefore, this line must coincide
with~\eqref{extrem}, which yields proportionality of the coefficients:
\eq[eq1]{
\frac{dt_0}{t_0-t_1}=\frac{dt_1}{t_1-2t_2}=\frac{dt_2}{t_2}\,.
}

Using the second equality, we express $t_1$ in terms of $t_2:$
\begin{gather*}
t_2dt_1=(t_1-2t_2)dt_2
\\
t_2dt_1-t_1dt_2=-2t_2dt_2
\\
d\big(\frac{t_1}{t_2}\big)=-2\frac{dt_2}{t_2}
\\
\frac{t_1}{t_2}=-2\log|t_2|+2c_1
\\
t_1=-2t_2\log|t_2|+2c_1t_2\,.
\end{gather*}
Now, we use the equality between the first and third terms
in~\eqref{eq1}:
\begin{gather*}
t_2dt_0=(t_0-t_1)dt_2
\\
t_2dt_0-t_0dt_2=-t_1dt_2
\\
\begin{aligned}
d\big(\frac{t_0}{t_2}\big)&=-\frac{t_1}{t_2}\,\frac{dt_2}{t_2}
\\
&=2(\log|t_2|-c_1)d\log|t_2|
\\
&=d(\log^2|t_2|-2c_1\log|t_2|)
\end{aligned}
\\
\frac{t_0}{t_2}=\log^2|t_2|-2c_1\log|t_2|+c_2
\\
t_0=t_2\log^2|t_2|-2c_1t_2\log|t_2|+c_2t_2\,.
\end{gather*}
Dividing~\eqref{line} by $t_2$ gives
$$
\Big(\frac{t_0}{t_2}-\frac{t_1}{t_2}\Big)+
x_1\Big(\frac{t_1}{t_2}-2\Big)+x_2=0.
$$
Plugging into this equality the earlier expressions for $t_1$ and
$t_0,$ we get
\eq[line1]{
\big(\log^2|t_2|-2c_1\log|t_2|+c_2+2\log|t_2|-2c_1\big)
+x_1\big(-2\log|t_2|+2c_1-2\big)+x_2=0\,.
}
From this expression, it is clear that it is convenient to introduce
a new parametrization of our extremal trajectories:
$$
a=\log|t_2|-c_1+1\,.
$$
The equation of the extremal trajectory~\eqref{line1} then takes
the form
$$
a^2-2ax_1+x_2-1+c_2-c_1^2=0\,.
$$
Since
$$
c_1^2+1-c_2=a^2-2ax_1+x_2=(a-x_1)^2+(x_2-x_1^2)\ge0\,,
$$
we can introduce a new positive constant $\delta\df (c_1^2+1-c_2)^{1/2}.$
In this notation~\eqref{line1} becomes
\eq[line2]{
x_2=2ax_1-a^2+\delta^2\,.
}
Note that this is an equation of the line tangent to the parabola
$x_2=x_1^2+\delta^2$ at the point $(a,a^2+\delta^2).$

Now, let us collect everything and write down a formula for $B:$
\begin{align*}
t_0&=(a^2-2a+2-\delta^2)t_2
\\
t_1&=-2(a-1)t_2
\\
t_2&=\pm e^{a+c_1-1}=ce^a
\\
B&=t_0+x_1t_1+x_2t_2
\\
&=(a^2-2a+2-\delta^2-2(a-1)x_1+x_2)t_2
\\
&=2c(1-a-x_1)e^a\,.
\end{align*}
From the equation of the extremal line~\eqref{line2}, we can express
$a$ as a function of $x:$
$$
a=a(x)=x_1\pm\sqrt{\delta^2-x_2+x_1^2}.
$$
Therefore,
$$
B(x)=2c\Big(1\mp\sqrt{\delta^2-x_2+x_1^2}\Big)
\exp\Big\{x_1\pm\sqrt{\delta^2-x_2+x_1^2}\Big\}\,.
$$
We find the constant $c$ from the boundary condition~\eqref{bc},
and we choose the sign by checking the sign of the Hessian. Finally,
we obtain
\eq[bbb]{
B(x)=\frac{1-\sqrt{\delta^2-x_2+x_1^2}}{1-\delta}
\exp\Big\{x_1+\sqrt{\delta^2-x_2+x_1^2}-\delta\Big\}\,.
}

Were we a bit more clever, we could realize from the beginning that
any extremal trajectory must touch the upper boundary tangentially,
because, when splitting the interval, a boundary point $x$
can be split into $x^\pm$ only along the tangential direction.
With that realization all calculations become much simpler.

Indeed, take an extremal line given by~\eqref{line2}. It intersects the
lower boundary $x_2=x_1^2$ at two points $(u,u^2)$ with $u=a\mp\ve.$
Since $B$ has to be linear on the extremal line, we have
$$
B(x)=k(u)(u-x_1)+f(u),
$$
where $f(u)$ is the boundary value of $B.$ We will not specify this
value until the very end of calculation. This will serve to demonstrate
that knowing extremal trajectories in advance allows one to solve some
rather general problems, and not just this specific one.

Let us calculate the partial derivative of $B$ with respect to either
coordinate, say~$x_2:$
$$
t_2=B_{x_2}=\big(k'(u)(x_1-u)-k(u)+f'(u)\big)
\frac{\partial u}{\partial x_2}\,.
$$
Using~\eqref{line2}, we get
$$
\frac{\partial u}{\partial x_2}=\frac{\partial a}{\partial x_2}
=\frac1{2(x_1-a)}
$$
and
$$
t_2=\frac12k'(u)+\frac{\pm\ve k'(u)-k(u)+f'(u)}{2(x_1-a)}\,.
$$
Since $t_2$ has to be constant on the extremal line, we conclude that
$$
t_2=\frac12k'(u)
$$
and the coefficient $k$ satisfies the equation
$$
\mp\ve k'(u)-k(u)+f'(u)=0,
$$
whose general solution is
$$
k(u)=\pm\frac1\ve\int_u^{\pm\infty}e^{-|t-u|/\ve}f'(t)\,dt+
C_\pm e^{\pm u/\ve}\,.
$$
(The same conclusion could be reached by considering $B_{x_1}.$)
Let us not discuss at this point why the constant $C_\pm$ should be
chosen equal to zero, other than say that it is a consequence of our
trying to find the best possible estimate. Rewriting the last formula
for our case, $f(u)=e^u,$ we get
$$
k(u)=\frac1{1\mp\ve}e^u\,,
$$
and, therefore,
\begin{align*}
B(x)&=e^u\Big(\frac{x_1-u}{1\mp\ve}+1\Big)
\\
&=\frac{1\mp\sqrt{\ve^2-x_2+x_1^2}}{1\mp\ve}
\exp\Big\{x_1\pm\sqrt{\ve^2-x_2+x_1^2}\mp\ve\Big\}\,.
\end{align*}
Taking the upper sign throughout, we get~\eqref{bbb} with $\delta=\ve.$
\bigskip

\noindent {\bf Homework assignment.}
{\small \sl \color{blue}
To emphasize the dependence on parameter, let us refer to the
function~\eqref{bbb} as $B(x;\delta).$ Check that $B(x;\ve)$ does
not satisfy the main inequality in the domain $\Omega_\ve,$ but
$B(x;\delta)$ does, provided $\delta\ge\frac3{2\sqrt2}\ve.$}

\section{John--Nirenberg inequality, Part III}

We now prove a geometric result that is crucial to applying the
Bellman function method to the usual, non-dyadic $\BMO$ (recall
that up to this point all discussions were about the dyadic space).
Let $[x,y]$ denote the straight-line segment connecting two points
$x$ and $y$ in the plane. Then we have the following lemma.

\begin{lemma}[Splitting lemma]
\label{splitting}
Fix two positive numbers $\ve,\delta,$ with $\ve<\delta.$ For an
arbitrary interval $I$ and any function $\varphi\in\BMO_\ve(I),$
there exists a splitting $I=I_+\cup I_-$ such that the whole
straight-line segment $[x^{I_-},x^{I_+}]$ is inside $\Omega_\delta$.
Moreover\textup, the parameters of splitting $\alpha_\pm\df|I_{\pm}|/|I|$
are separated form $0$ and $1$ by constants depending on $\ve$ and
$\delta$ only\textup, i.e. uniformly with respect to the choice of
$I$ and $\varphi.$
\end{lemma}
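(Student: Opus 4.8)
The plan is to produce the splitting by a continuity/intermediate-value argument, tracking how the chord $[x^{I_-},x^{I_+}]$ moves as we slide the division point across $I$. Parametrize the splitting by $\alpha\in(0,1)$: let $I_-(\alpha)$ be the left subinterval of length $\alpha|I|$ and $I_+(\alpha)$ the right one of length $(1-\alpha)|I|$. Write $x^\pm(\alpha)=x^{I_\pm(\alpha)}$ for the corresponding Bellman points in $\Omega_\ve\subset\Omega_\delta$; note $x=x^I$ is the fixed $\alpha$-weighted average, $x=\alpha x^-(\alpha)+(1-\alpha)x^+(\alpha)$, so the chord always passes through $x$, but with a varying split ratio. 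Since $\varphi\in\BMO_\ve(I)$, both endpoints lie in $\Omega_\ve$, which sits strictly inside $\Omega_\delta$ except along the common lower parabola $x_2=x_1^2$; the only way the segment $[x^-(\alpha),x^+(\alpha)]$ can fail to lie in $\Omega_\delta$ is for it to bulge below that parabola, i.e. for its chord to dip under $x_2=x_1^2$ between the endpoints. So the real task is to choose $\alpha$ so that the chord stays above the lower parabola while $\alpha$ is quantitatively bounded away from $0$ and $1$.

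First I would reduce to a one-dimensional convexity inequality. A segment with endpoints $p^\pm=(p_1^\pm,p_2^\pm)\in\Omega_\ve$ lies in $\Omega_\delta$ iff at its lowest point over the parabola the vertical gap is nonnegative; since $x_1^2$ is convex, the worst case is controlled by the quantity $\big(\alpha (p_1^-)^2+(1-\alpha)(p_1^+)^2\big)-\big(\alpha p_1^-+(1-\alpha)p_1^+\big)^2 = \alpha(1-\alpha)(p_1^+-p_1^-)^2$ versus the available vertical room $\delta^2-\ve^2$ at the endpoints (using $p_2^\pm-\,(p_1^\pm)^2\le\ve^2$). Thus the segment stays in $\Omega_\delta$ as soon as $\alpha(1-\alpha)\,(x_1^+-x_1^-)^2\le \delta^2-\ve^2$, i.e. once the horizontal spread $|x_1^+(\alpha)-x_1^-(\alpha)| = |\av{\varphi}{I_+(\alpha)} - \av{\varphi}{I_-(\alpha)}|$ of the two averages is not too large relative to $\sqrt{\delta^2-\ve^2}$.

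Next I would control that horizontal spread. The key elementary estimate is that for $\varphi\in\BMO_\ve(I)$ one has $|\av\varphi{I'}-\av\varphi I|\le \ve\sqrt{|I|/|I'|}$ for any subinterval $I'\subset I$ (immediate from $\av{|\varphi-\av\varphi I|^2}{I}\le\ve^2$ and $|I'|\av{|\varphi-\av\varphi I|^2}{I'}\le|I|\ve^2$), hence $|x_1^\pm(\alpha)-x_1|\le \ve/\sqrt{\alpha}$ and $\le\ve/\sqrt{1-\alpha}$ respectively, giving $|x_1^+(\alpha)-x_1^-(\alpha)|\le \ve(\alpha^{-1/2}+(1-\alpha)^{-1/2})$. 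Combined with the previous paragraph, the inclusion holds provided
$$
\alpha(1-\alpha)\,\ve^2\big(\alpha^{-1/2}+(1-\alpha)^{-1/2}\big)^2\le\delta^2-\ve^2,
$$
and the left side is bounded above by $4\ve^2$ for all $\alpha\in(0,1)$; but this crude bound only works when $\delta^2\ge 5\ve^2$, which is not assumed. To get the full range $\ve<\delta$ I would instead not fix the endpoints' vertical positions at the extreme value $\ve^2$: the endpoint gaps $\ve^2-(x_2^\pm-\,(x_1^\pm)^2)$ are themselves large exactly when $\varphi$ is far from constant on $I_\pm$, and a more careful bookkeeping (splitting $I$ at its "median level set'' rather than arbitrarily, so that $|\av\varphi{I_+}-\av\varphi{I_-}|$ is comparable to the full oscillation while each half still carries $\BMO$ norm $\le\ve$) trades horizontal spread against recovered vertical room. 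Making this trade quantitative, with $\alpha$ chosen as the median-level parameter and shown to be bounded away from $0,1$, is the heart of the argument.

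The main obstacle is precisely this last point: a naive uniform split gives the lemma only under a strengthened hypothesis like $\delta\gtrsim\ve$ with a nontrivial constant, and pushing it down to the sharp threshold $\ve<\delta$ requires choosing the split point adaptively (via a level set of $\varphi$) and carefully estimating, simultaneously, that (i) the chord's sag $\alpha(1-\alpha)(x_1^+-x_1^-)^2$ does not exceed the vertical slack actually available at the two endpoints, and (ii) the resulting $\alpha$ stays in a fixed compact subinterval of $(0,1)$. I expect the rest — the convexity reduction and the $\BMO$-to-average estimates — to be routine.
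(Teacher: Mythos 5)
Your proposal has a genuine gap, and you acknowledge it: the naive midpoint split works only when $\delta$ exceeds $\ve$ by a fixed constant factor, and your suggested fix (adaptive splitting by a median level set) is left unquantified. Two smaller points first. The failure mode is misstated: since $x_1\mapsto x_1^2$ is convex and both endpoints satisfy $x_2\ge x_1^2$, every point of the chord automatically satisfies $x_2\ge x_1^2$, so the lower boundary is never a problem; what can fail is that the chord rises \emph{above} the upper parabola $x_2=x_1^2+\delta^2$ (precisely because the chord of a convex function lies above its graph). Moreover your sag $\alpha(1-\alpha)(x_1^+-x_1^-)^2$ is the excess only at the single point $x$; for the whole segment you need $\max_{t\in[0,1]}t(1-t)(x_1^+-x_1^-)^2=\tfrac14(x_1^+-x_1^-)^2$, so the sufficient condition is $\tfrac14(x_1^+-x_1^-)^2\le\delta^2-\ve^2$, which is still unattainable in general from the BMO estimate alone.

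The paper closes the gap by a continuity argument, not a level-set construction, and this is the idea you are missing. Start at $\alpha_+=\tfrac12$; if the chord is already in $\Omega_\delta$, stop. Otherwise let $\xi$ be the endpoint (say $x^+$) on whose side the chord exits $\Omega_\delta$, and set $\rho(\alpha_+)=\max_{y\in[x^0,\xi]}\{y_2-y_1^2\}$. Slide $\alpha_+$ continuously so that the subinterval carrying $\xi$ grows toward $I$; then $\xi\to x^0$, so $\rho\to$ a value $\le\ve^2$, while $\rho(\tfrac12)>\delta^2$, so by the intermediate value theorem there is a stopping value with $\rho=\delta^2$. At that moment the chord is tangent to $x_2=x_1^2+\delta^2$, and the bound on $\alpha_\pm$ drops out of the explicit geometry of two nested parabolas: the chord's intersections with $x_2=x_1^2$ span a horizontal length $2\delta$, its intersections with $x_2=x_1^2+\ve^2$ span $2\sqrt{\delta^2-\ve^2}$, and the Bellman points $x^0,\xi$ (both in $\Omega_\ve$, on opposite sides of the tangency point) must enclose the second interval and lie inside the first, forcing $\min\{\alpha_\pm\}\,\cdot 2\delta\ge 2\sqrt{\delta^2-\ve^2}$, i.e. $\min\{\alpha_\pm\}\ge\sqrt{1-(\ve/\delta)^2}$, uniformly for all $\ve<\delta$. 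No a priori control of the sag is needed; one slides to the moment of tangency and reads the bound off the tangent line.
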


\begin{proof}
Fix an interval $I$ and a function $\varphi\in \BMO_{\ve}(I).$ We now
demonstrate an algorithm to find a splitting $I=I_-\cup I_+$ (i.e.
choose the splitting parameters $\alpha_\pm=|I_\pm|/|I|$) so that
the statement of the lemma holds. For simplicity, put  $x^0=x^I$
and $x^{\pm}=x^{I_\pm}.$

First, we take $\alpha_-=\alpha_+=\frac12$ (see Fig.~\ref{f11}).
\begin{figure}[ht]
\begin{center}
\begin{picture}(200,175)
\thinlines
\put(100,0){\vector(0,1){180}}
\put(10,20){\vector(1,0){180}}
\linethickness{.5pt}
\thicklines
\qbezier[1000](20,82)(100,-42)(180,82)
\qbezier[1000](20,162)(100,15)(180,162)
\qbezier[1000](20,168)(100,15)(180,168)
\qbezier[1000](28,77)(28,77)(138,101)
\put(30,69){\footnotesize $x^-$}
\put(83,80){\footnotesize $x^0$}
\put(138,93){\footnotesize $\xi$}
\put(28,77){\circle*{2}}
\put(83,89){\circle*{2}}
\put(138,101){\circle*{2}}
\put(183,80){\footnotesize $x_2=x_1^2$}
\put(183,160){\footnotesize $x_2=x_1^2+\ve^2$}
\put(183,175){\footnotesize $x_2=x_1^2+\delta^2$}
\end{picture}
\caption{The initial splitting: $\alpha_-=\alpha_+=\frac12,~\xi=x^+.$}
\label{f11}
\end{center}
\end{figure}
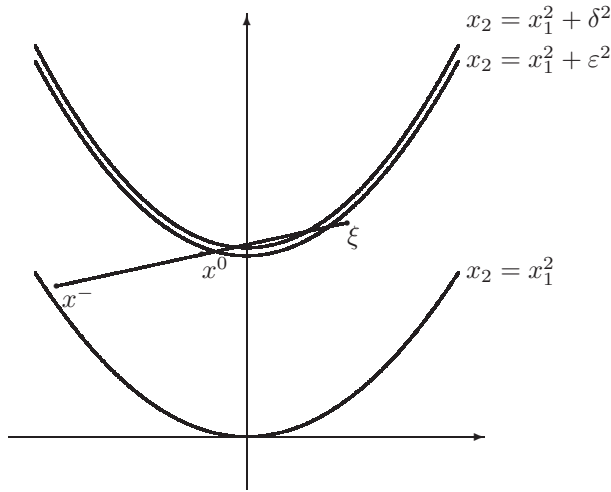
If the whole segment $[x^-,x^+]$ is in $\Omega_{\delta},$ we fix this
splitting. Assuming it is not the case, there exists a point $x$ on
this segment with $x_2-x_1^2>\delta^2.$ Observe that only one of the
segments, either $[x^-,x^0]$ or $[x^+,x^0],$ contains such points.
Denote the corresponding endpoint ($x^-$ or $x^+$) by $\xi$ and define
a function $\rho$ by
$$
\rho(\alpha_+)=\max_{x\in [x^-\!,\,x^+]}\{x_2-x_1^2\}=
\max_{x\in [\xi,\,x^0]}\{x_2-x_1^2\}.
$$
By assumption, $\rho\left(\frac12\right)>\delta^2.$ We will now change
$\alpha_+$ so that $\xi$ approaches $x^0,$ i.e. we will increase
$\alpha_+$ if $\xi=x^+$ and decrease it if $\xi=x^-.$ We stop when
$\rho(\alpha_+)=\delta^2$ and fix that splitting. It remains to check
that such a moment occurs and that the corresponding $\alpha_+$ is
separated from 0 and 1.

Without loss of generality, assume that $\xi=x^+.$ Since the function
$x^+(\alpha_+)$ is continuous on the interval $(0,1]$ and $x^+(1)=x^0,$
$\rho$ is continuous on $[\frac12,1].$ We have
$\rho\left(\frac12\right)>\delta^2$ and we also know that $\rho(1)\le
\ve^2<\delta^2$ (because $x^0\in \Omega_\ve$). Therefore, there is a
point $\alpha_+\in\left[\frac12,1\right]$ with $\rho(\alpha_+)=\delta^2$
(Fig.~\ref{f12}).

Having just proved that the desired point exists, we need to check that
the corresponding $\alpha_+$ is not too close to 0 or 1. If $\xi=x^+,$
we have $\alpha_+>\frac12$ and $\xi_1-x_1^0=x_1^+-x_1^0=
\alpha_-(x_1^+-x_1^-).$ Similarly, if $\xi=x^-,$ we have
$\alpha_->\frac12$ and $\xi_1-x_1^0=x_1^--x_1^0=\alpha_+(x_1^--x_1^+).$
Thus, $|\xi_1-x_1^0|=\min\{\alpha_\pm\}|x_1^--x_1^+|.$
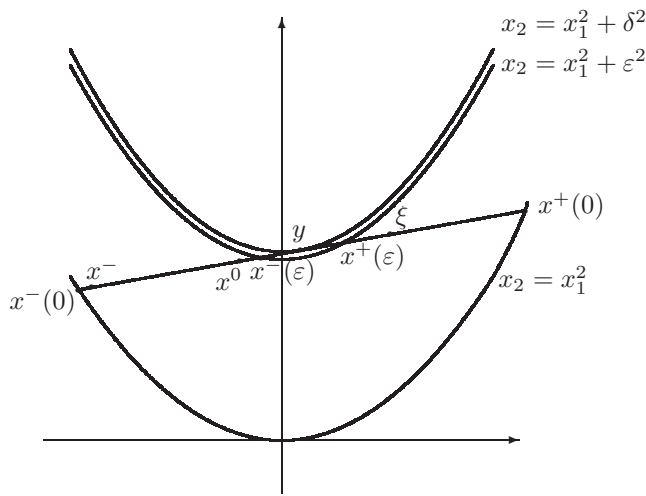
\begin{figure}[ht]
\begin{center}
\begin{picture}(200,175)
\thinlines
\put(100,0){\vector(0,1){180}}
\put(10,20){\vector(1,0){180}}
\linethickness{.5pt}
\thicklines
\qbezier[1000](20,82)(100,-42)(180,82)
\qbezier[1000](20,162)(100,15)(180,162)
\qbezier[1000](20,168)(100,15)(180,168)
\qbezier[1000](23,77)(23,77)(192,107)
\qbezier[200](180,82)(190,100)(193,110)
\put(23,77){\circle*{2}}
\put(28,78){\circle*{2}}
\put(141,98){\circle*{2}}
\put(84,88){\circle*{2}}
\put(92,89){\circle*{2}}
\put(107,92){\circle*{2}}
\put(124,95){\circle*{2}}
\put(192,107){\circle*{2}}
\put(182,78){\footnotesize $x_2=x_1^2$}
\put(183,160){\footnotesize $x_2=x_1^2+\ve^2$}
\put(183,175){\footnotesize $x_2=x_1^2+\delta^2$}
\put(26,81){\footnotesize $x^-$}
\put(75,77){\footnotesize $x^0$}
\put(88,81){\footnotesize $x^-(\ve)$}
\put(104,97){\footnotesize $y$}
\put(122,87){\footnotesize $x^+(\ve)$}
\put(143,102){\footnotesize $\xi$}
\put(197,106){\footnotesize $x^+(0)$}
\put(-3,70){\footnotesize $x^-(0)$}
\end{picture}
\caption{The stopping time: $[x^-\!,\,\xi]$ is tangent to the parabola $x_2=x_1^2+\ve^2.$}
\label{f12}
\end{center}
\end{figure}
For the stopping value of $\alpha_+,$ the straight line through the points
$x^-,x^+,$ and $x^0$ is tangent to the parabola $x_2=x_1^2+\delta^2$ at
some point $y.$ The equation of this line is, therefore,
$x_2=2x_1y_1-y_1^2+\delta^2.$ The line intersects the graph of
$x_2=x_1^2+s^2$ at the points
$$
x^{\pm}(s)=\left(y_1\pm\sqrt{\delta^2-s^2},\,
y_2\pm2y_1\sqrt{\delta^2-s^2}\right)\,.
$$
Let us focus on the points $x^{\pm}(0)$ and $x^{\pm}(\ve).$ We have
$$
[x^-(\ve),\,x^+(\ve)]\subset[x^0\!,\,\xi]\subset[x^-\!,\,x^+]
\subset[x^-(0),\,x^+(0)]
$$
and, therefore,
$$
\begin{array}{lll}
2\sqrt{\delta^2-\ve^2}&=&|x_1^+(\ve)-x_1^-(\ve)|\le|x_1^0-\xi_1|
=\min\{\alpha_\pm\}|x_1^+-x_1^-|
\\&&\\
&\le&\min\{\alpha_\pm\}|x_1^+(0)-x_1^-(0)|=\min\{\alpha_\pm\}2\delta,
\end{array}
$$
which implies
$$
\sqrt{1-\left(\frac{\ve}{\delta}\right)^2}\le
\alpha_+\le 1-\sqrt{1-\left(\frac{\ve}{\delta}\right)^2}.
$$
As promised, this estimate does not depend on $\varphi$ or $I.$
\end{proof}

From now on, we shall consider not the dyadic Bellman function $\Bel$,
but the ``true'' one:
$$
\Bel(x;\ve)\;\;\df\sup_{\varphi\in\BMO_\ve(J)}\left\{\av{e^\varphi}J
\colon\av{\varphi}J=x_1,\;\av{\varphi^2}J=x_2\right\}.
$$
The test functions now run over the $\ve$-ball of the non-dyadic \BMO.

Using the splitting lemma, we are able to make the Bellman induction
work in the non-dyadic case.

\begin{lemma}[Bellman induction]
If $B$ is a continuous\textup, locally concave function on the
domain $\Omega_\delta,$ satisfying the boundary
condition~\eqref{bcJN}\textup, then $\Bel(x;\ve)\le B(x)$ for
all $\ve<\delta.$
\end{lemma}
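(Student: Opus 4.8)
The plan is to run the Bellman induction exactly as in the dyadic case, but with dyadic halving replaced by the splitting supplied by the Splitting Lemma~\ref{splitting}. Fix an interval $J$ and first a bounded test function $\varphi\in\BMO_\ve(J)$. Apply Lemma~\ref{splitting} to $J$ to get $J=J_-\cup J_+$ with $[x^{J_-},x^{J_+}]\subset\Omega_\delta$ and the splitting parameters $\alpha_\pm=|J_\pm|/|J|$ bounded away from $0$ and $1$; then apply it again to each of $J_\pm$, and iterate. At the $n$-th stage we have a partition of $J$ into subintervals $I$. Because $\av{\varphi}I$ and $\av{\varphi^2}I$ are the $\alpha_\pm$-weighted averages of the corresponding quantities over the two children of $I$, the Bellman point $x^I$ is the barycenter $\alpha_+x^{I_+}+\alpha_-x^{I_-}$. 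A continuous locally concave function is concave along any segment contained in its domain (cover the segment by finitely many neighborhoods of concavity), so from $[x^{I_-},x^{I_+}]\subset\Omega_\delta$ we get
$$
|I|\,B(x^I)\ge|I_+|\,B(x^{I_+})+|I_-|\,B(x^{I_-}).
$$
Iterating this from $J$ downward gives, for every $n$,
$$
|J|\,B(x^J)\ge\sum_{I}|I|\,B(x^I)=\int_J B\big(x^{(n)}(s)\big)\,ds,\qquad x^{(n)}(s):=x^I\ \text{for}\ s\in I .
$$

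The one genuinely new ingredient, compared with the dyadic induction, is the passage to the limit $n\to\infty$, and this is where the uniform separation in the Splitting Lemma does its work: each child interval satisfies $|I_\pm|\le q\,|I|$ with $q=1-\sqrt{1-(\ve/\delta)^2}<1$, so the $n$-th generation consists of intervals of length at most $q^n|J|$. Hence the partitions shrink to points and, by the Lebesgue differentiation theorem, $x^{(n)}(s)\to(\varphi(s),\varphi^2(s))$ almost everywhere. As $\varphi$ is bounded, the points $x^{(n)}(s)$ lie in a compact subset of $\Omega_\ve\subset\Omega_\delta$ on which the continuous function $B$ is bounded, so dominated convergence applies; using the boundary condition~\eqref{bcJN} for the limit integrand we obtain
$$
|J|\,B(x^J)\ge\int_J B\big(\varphi(s),\varphi^2(s)\big)\,ds=\int_J e^{\varphi(s)}\,ds=|J|\,\av{e^\varphi}J .
$$

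Removing boundedness is then a verbatim repetition of the dyadic argument: by the Cut-off Lemma~\ref{norm} the truncations stay in $\BMO_\ve(J)$, so one first lets $\varphi_{-n,\infty}\to\varphi$ for $\varphi$ bounded above (dominated convergence with majorant $e^\varphi$) and then $\varphi_{-\infty,n}\to\varphi$ for arbitrary $\varphi$ (monotone convergence on the right). This yields $B(x^J)\ge\av{e^\varphi}J$ for every $\varphi\in\BMO_\ve(J)$, and taking the supremum over all admissible test functions with Bellman point $x$ gives $\Bel(x;\ve)\le B(x)$.

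I expect the crux to be the convergence step in the middle paragraph. The subtlety is that $\Omega_\delta$ is not convex, so local concavity cannot be applied along an arbitrary chord; the Splitting Lemma is precisely what guarantees both that the chord $[x^{I_-},x^{I_+}]$ stays inside $\Omega_\delta$ and that $\alpha_\pm$ are bounded away from $0$ and $1$ — and it is the second property, absent in a general non-dyadic splitting scheme, that forces $\max_I|I|\to0$ and legitimizes the limit. Once that is in place, nothing new happens: the rest is the dyadic proof with ``dyadic children'' read as ``children produced by the Splitting Lemma.''
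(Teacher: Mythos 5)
Your proof is correct and follows essentially the same path as the paper's: use the Splitting Lemma to keep each chord $[x^{I_-},x^{I_+}]$ inside $\Omega_\delta$ so local concavity of $B$ applies, iterate to get $|J|B(x^J)\ge\int_J B(x^{(n)})$, invoke the uniform bound on $\alpha_\pm$ to conclude $\max_I|I|\to 0$ and pass to the limit via Lebesgue differentiation and dominated convergence for bounded $\varphi$, then remove boundedness with the Cut-off Lemma. The extra details you supply (the barycenter identity $x^I=\alpha_+x^{I_+}+\alpha_-x^{I_-}$, that a continuous locally concave function is concave along any chord lying in its domain, and the explicit rate $q$) are all correct and simply spell out steps the paper takes for granted.
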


\begin{proof}
Fix a function $\varphi\in\BMO_\ve(J)$. By the splitting lemma we
can split every subinterval $I\subset J,$ in such a way that the
segment $[x^{I_-},x^{I_+}]$ is inside $\Omega_\delta$. Since $B$
is locally concave, we have
$$
|I|B(x^I)\ge|I_+|B(x^{I_+})+|I_-|B(x^{I_-})
$$
for any such splitting. Now we can repeat, word for word, the
arguments used in the dyadic case. If $\D_n$ is the set of intervals
of $n$-th generation, then
$$
|J|B(x^J)\ge|J_+|B(x^{J_+})+|J_-|B(x^{J_-})
\ge\sum_{I\in\D_n}|I|B(x^I)=\int_JB(x^{(n)}(s))\,ds\,,
$$
where $x^{(n)}(s)=x^I,$ when $s\in I,$ $I\in\D_n.$ By the Lebesgue
differentiation theorem we have $x^{(n)}(s)\to(\varphi(s),\varphi^2(s))$
almost everywhere. (We have used here the fact that we split the
intervals so that all coefficients $\alpha_\pm$ are uniformly separated
from~$0$ and~$1,$ and, therefore, $\max\{|I|\colon I\in\D_n\}\to0$ as
$n\to\infty).$ Now, we can pass to the limit in this inequality as
$n\to\infty$. Again, first we assume $\varphi$ to be bounded and, by
the Lebesgue dominated convergence theorem, pass to the limit in the
integral using the boundary condition~\eqref{bcJN}:
$$
|J|B(x^J)\ge\int_JB(\varphi(s),\varphi^2(s))\,ds=
\int_Je^{\varphi(s)}ds=|J|\av{e^\varphi}J\,.
$$
Then using the cut-off approximation, we get the same inequality
for an arbitrary $\varphi\in\BMO_\ve(J).$
\end{proof}

\begin{cor}
$$
\Bel(x;\ve)\le B(x;\delta)\qquad \ve<\delta<1\,.
$$
\end{cor}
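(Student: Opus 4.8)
The plan is to read the corollary off the preceding (non-dyadic) Bellman induction lemma, applied to the explicit function $B(x)=B(x;\delta)$ defined by~\eqref{bbb}. That lemma has three hypotheses on $B$: continuity on $\Omega_\delta$, local concavity on $\Omega_\delta$, and the boundary condition~\eqref{bcJN}. Once these three are verified for $B(\cdot;\delta)$, the lemma immediately gives $\Bel(x;\ve)\le B(x;\delta)$ for every $\ve<\delta$, which together with the restriction $\delta<1$ (needed below) is exactly the assertion of the corollary.

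The boundary condition is the quickest to check: on the lower parabola $x_2=x_1^2$ the radicand $\delta^2-x_2+x_1^2$ equals $\delta^2$, so $\sqrt{\delta^2-x_2+x_1^2}=\delta$ and~\eqref{bbb} collapses to $B(x_1,x_1^2)=\frac{1-\delta}{1-\delta}\,e^{x_1+\delta-\delta}=e^{x_1}$, which is~\eqref{bcJN}. For continuity I would note that on $\Omega_\delta$ one has $x_1^2\le x_2\le x_1^2+\delta^2$, hence $0\le\delta^2-x_2+x_1^2\le\delta^2$, so the square root is real and continuous everywhere on $\Omega_\delta$, including both bounding parabolas; the only other possible source of trouble, the denominator $1-\delta$, is nonzero (indeed positive) precisely because $\delta<1$ — this is where that hypothesis is used.

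The substantive step is local concavity, and I would lean on the Hessian computation already posed as a homework assignment in Section~3. Taking the upper sign there (the one occurring in~\eqref{bbb}), the Hessian quadratic form of $B(\cdot;\delta)$ at an interior point $x$ of $\Omega_\delta$ is
$$
\sum_{i,j=1}^2\frac{\partial^2 B(x;\delta)}{\partial x_i\,\partial x_j}\Delta_i\Delta_j
=-\frac{\big((x_1+\sqrt{\delta^2-x_2+x_1^2})\Delta_1-\frac12\Delta_2\big)^2}{\sqrt{\delta^2-x_2+x_1^2}\,(1-\delta)}\exp\big\{x_1+\sqrt{\delta^2-x_2+x_1^2}-\delta\big\},
$$
which is $\le0$ because on the interior of $\Omega_\delta$ the radicand is strictly positive and $1-\delta>0$. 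Hence the Hessian of $B(\cdot;\delta)$ is negative semidefinite on the interior, so $B(\cdot;\delta)$ is concave along every straight-line segment lying in the interior; a segment with an endpoint on the boundary of $\Omega_\delta$ is a limit of such interior segments, so concavity along it follows from the continuity established above. This is precisely local concavity on $\Omega_\delta$, and I would add the remark that $\Omega_\delta$ is itself non-convex (its top edge $x_2=x_1^2+\delta^2$ bounds the region from the concave side of a parabola), so local concavity — not global concavity — is both what is available and what the induction lemma requires.

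The main obstacle is this last point. One has to either accept or redo the Hessian identity from the Section~3 homework, keep careful track of which of the two signs of $B_\pm$ is in force, and deal with the degeneration of that identity on the upper boundary $x_2=x_1^2+\delta^2$, where the radicand vanishes and the formula is singular; there I would fall back on continuity of $B(\cdot;\delta)$ up to that boundary rather than on the Hessian formula. Everything else is direct substitution into~\eqref{bbb}.
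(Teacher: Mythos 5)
Your proposal is correct and follows the same route as the paper: both invoke the preceding non-dyadic Bellman induction lemma with the candidate $B(\cdot;\delta)$ from~\eqref{bbb}. The paper's own proof is a single sentence — ``The function $B(x;\delta)$ was constructed as a locally concave function satisfying boundary condition~\eqref{bcJN}'' — deferring the Hessian sign check to the Section~3 homework, while you carry out that verification explicitly (boundary condition by substitution, continuity via $\delta<1$ and $0\le\delta^2-x_2+x_1^2\le\delta^2$, local concavity from the sign of the Hessian quadratic form in the interior, with a limiting argument at the degenerate upper boundary), all of which is sound.
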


\begin{proof}
The function $B(x;\delta)$ was constructed as a locally concave function
satisfying boundary condition~\eqref{bcJN}.
\end{proof}
\begin{cor}
\eq[nn]{
\Bel(x;\ve)\le B(x;\ve)\,.
}
\end{cor}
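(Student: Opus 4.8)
The plan is to derive \eqref{nn} from the preceding corollary by letting the auxiliary parameter $\delta$ decrease to $\ve$. That corollary asserts $\Bel(x;\ve)\le B(x;\delta)$ for every $x\in\Omega_\ve$ and every $\delta\in(\ve,1)$; since the left-hand side carries no $\delta$, it is enough to show that $B(x;\delta)\to B(x;\ve)$ as $\delta\downarrow\ve$ for each such $x$, and then pass to the limit in the inequality.

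So I would fix $x=(x_1,x_2)\in\Omega_\ve$, which means $0\le x_2-x_1^2\le\ve^2$. Then for every $\delta\ge\ve$ one has $\delta^2-x_2+x_1^2\ge\delta^2-\ve^2\ge0$, so the radical $\sqrt{\delta^2-x_2+x_1^2}$ in the explicit formula \eqref{bbb} is real and depends continuously on $\delta$ on $[\ve,\infty)$; and $1-\delta$ stays bounded away from $0$ for $\delta$ near $\ve$ (here we use $\ve<1$). Hence the whole right-hand side of \eqref{bbb}, a composition of continuous functions, is continuous in $\delta$ at $\delta=\ve$, so $\lim_{\delta\downarrow\ve}B(x;\delta)=B(x;\ve)$. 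Letting $\delta\downarrow\ve$ in $\Bel(x;\ve)\le B(x;\delta)$ then yields $\Bel(x;\ve)\le B(x;\ve)$ for every $x\in\Omega_\ve$, which is \eqref{nn}.

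There is no real obstacle here: the only point to watch is that the formula \eqref{bbb} does not degenerate in the limit, i.e. that the square root remains real (ensured on $\Omega_\ve$ by $\delta\ge\ve$) and the denominator $1-\delta$ remains nonzero (ensured by $\ve<1$). An alternative, if one prefers, is to check that $\delta\mapsto B(x;\delta)$ is nondecreasing on $[\ve,1)$ and take the infimum over $\delta$; but the continuity argument is the shortest route.
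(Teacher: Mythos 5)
Your proof is correct and is essentially the paper's argument: both pass to the limit $\delta\to\ve$ in the preceding corollary using the continuity of $B(x;\delta)$ in $\delta$. You simply fill in the (routine) verification of that continuity from the explicit formula~\eqref{bbb}, which the paper leaves to the reader.
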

\begin{proof}
Since the function $B(x;\delta)$ is continuous with respect to the
parameter $\delta\in(0,1),$ we can pass to the limit $\delta\to\ve$
in the preceding corollary.
\end{proof}

Now, we would like to prove the inequality converse to~\eqref{nn}. To
this end, for every point $x$ of $\Omega_\ve$ we construct a test
function $\varphi$ on any interval with $\BMO$ norm $\ve,$ satisfying
$\av{e^\varphi}{}=B(x;\ve),$ and such that its Bellman point is $x$
(let us call such a function an {\it optimizer} for the point $x$).
This would imply the inequality $\Bel(x;\ve)\ge B(x;\ve).$

First, we construct an optimizer $\varphi_0$ for the point $(0,\ve^2).$
Without loss of generality, we can work on $I\df[0,1].$ Note that the
function $\varphi_a\df\varphi_0+a$ will then be an optimizer for the
point $(a,a^2+\ve^2).$ Indeed, $\varphi_a$ has the same norm as
$\varphi_0,$ and if
$$
\av{e^{\varphi_0}}{}=B(0,\ve^2;\ve)=\frac{e^{-\ve}}{1-\ve}\,,
$$
then
$$
\av{e^{\varphi_a}}{}=\frac{e^{a-\ve}}{1-\ve}=B(a,a^2+\ve^2;\ve)\,.
$$
The point $(0,\ve^2)$ is on the extremal line starting at $(-\ve,\ve^2).$
To keep equality on each step of the Bellman induction, when we split
$I$ into two subintervals $I_-$ and $I_+,$ the segment $[x^-\!,\,x^+]$
has to be contained in the extremal line along which our function $B$
is linear. Since $x$ is a convex combination of $x^-$  and $x^+,$ one
of these points, say $x^+,$ has to be to the right of $x.$ However,
the extremal line ends at $x=(0,\ve^2),$ and so there seems to be
nowhere to place that point. We circumvent this difficulty by placing
$x^+$ infinitesimally close to $x$ and using an approximation procedure.
Where should $x^-$ be placed? We already know optimizers for points on
the lower boundary $x_2=x_1^2,$ since the only test function there are
constants. Thus, it is convenient to put $x^-$ there. Therefore, we set
$$
x^-=(-\ve,\ve^2)\qquad\text{and}\qquad x^+=(\Delta\ve,\ve^2)\,,
$$
for small $\Delta.$ To get these two points, we have to split $I$ in
proportion $1\!:\!\Delta,$ that is we take $I_+=[0,\frac1{1+\Delta}]$
and $I_-=[\frac1{1+\Delta},1].$
\begin{figure}[ht]
\begin{center}
\begin{picture}(300,50)
\thicklines
\put(60,20){\line(0,1){10}}
\put(220,20){\line(0,1){10}}
\put(240,20){\line(0,1){10}}
\put(60,25){\line(1,0){180}}
\put(145,-5){$I_+$}
\put(227,-5){$I_-$}
\put(57,10){$0$}
\put(205,10){\footnotesize$\frac1{1+\Delta}$}
\put(237,10){$1$}
\put(10,35){\footnotesize$\varphi_0(t)\approx$}
\put(110,35){\footnotesize$\varphi_{\Delta\ve}((1+\Delta)t)$}
\put(223,35){\footnotesize$-\ve$}
\end{picture}
\label{f18}
\end{center}
\end{figure}
To get the point $x^-,$ we have to put $\varphi_0(t)=-\ve$ on $I_-.$
On $I_+,$ we put a function corresponding not to the point $x^+,$ but to the point
$(\Delta\ve,(1+\Delta^2)\ve)$ on the upper boundary, which is close to $x^+$ (the
distance between these two points is of order $\Delta^2).$ For such a point the
extremal function is $\varphi_{\Delta\ve}(t)=\varphi_0(t)+\Delta\ve.$ Therefore,
this function, when properly rescaled, can be placed on $I_+.$ As a result, we obtain
$$
\varphi_0(t)\approx\varphi_0\big((1+\Delta)t\big)+\Delta\ve
\approx\varphi_0(t)+\varphi_0'(t)\Delta t+\Delta\ve\,,
$$
which yields
$$
\varphi'_0(t)=-\frac\ve{t}\,.
$$
Taking into account the boundary condition $\varphi_0(1)=-\ve,$ we get
$$
\varphi_0(t)=\ve\log\frac1t-\ve\,.
$$

Let us check that we have found what we need:
$$
\av{e^{\varphi_0}}{[0,1]}=\int_0^1\!\!e^{-\ve}\frac{dt}{t^\ve}
=\frac{e^{-\ve}}{1-\ve}=B(0,\ve^2;\ve)\,.
$$

It easy now to get an extremal function for an arbitrary point $x$ in $\Omega_\ve.$
First of all, we draw the extremal line through $x$. It touches the upper boundary
at the point $(a,a^2+\ve^2)$ with $a=x_1+\sqrt{\ve^2-x_2+x_1^2}$ and intersects the
lower boundary at the point $(u,u^2)$ with $u=a-\ve.$ Now, we split the interval $[0,1]$
in proportion $(x_1-u)\!:\!(a-x_1)$ and concatenate the two known optimizers, $\varphi=u$
for the $x^-=(u,u^2)$ and $\varphi=\varphi_a$ for $x^+=(a,a^2+\ve^2)$. This gives the
following function:
$$
\varphi(t)=
\begin{cases}
\ve\log\frac{x_1-u}t+u&0\le t\le x_1-u
\\
\quad u&x_1-u\le t\le1
\end{cases},
\quad\text{where}\quad u=x_1+\sqrt{\ve^2-x_2+x_1^2}-\ve\,.
$$
This is a function from $\BMO_\ve$ satisfying the required property $\av{e^\varphi}{[0,1]}
=B(x;\ve)$ (see the homework assignment below).

This completes the proof of the following theorem
\begin{thmnonum}
If $\ve<1,$ then
$$
\Bel(x;\ve)=\frac{1-\sqrt{\ve^2-x_2+x_1^2}}{1-\ve}
\exp\Big\{x_1+\sqrt{\ve^2-x_2+x_1^2}-\ve\Big\}\,;
$$
if $\ve\ge1,$ then $\Bel(x;\ve)=\infty.$
\end{thmnonum}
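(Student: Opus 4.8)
The plan is to squeeze $\Bel(x;\ve)$ between the upper bound already in hand and a matching lower bound produced by the explicit test functions constructed above, and then to dispose of the range $\ve\ge1$ by a short separate argument.

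\emph{The case $\ve<1$.} The bound $\Bel(x;\ve)\le B(x;\ve)$, where $B(x;\ve)$ is the function~\eqref{bbb} taken at $\delta=\ve$, is exactly~\eqref{nn}; so all that remains is the reverse inequality $\Bel(x;\ve)\ge B(x;\ve)$ for each $x\in\Omega_\ve$. For this I would work on $J=[0,1]$ and take as a test function the $\varphi$ assembled at the end of the previous section: the concatenation of the (rescaled) logarithmic optimizer of the upper-boundary point $(a,a^2+\ve^2)$, with $a=x_1+\sqrt{\ve^2-x_2+x_1^2}$, and the constant optimizer $u=a-\ve$ of the lower-boundary point $(u,u^2)$, the splitting being in the ratio $(x_1-u):(a-x_1)$, so that the logarithmic piece is the interval $[0,p]$ of length $p=(x_1-u)/\ve$. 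One has to check three things: that $\varphi\in\BMO_\ve([0,1])$; that $\av{\varphi}{[0,1]}=x_1$ and $\av{\varphi^2}{[0,1]}=x_2$, so that $\varphi$ is admissible for $x$; and that $\av{e^\varphi}{[0,1]}=B(x;\ve)$. Granting these, $\Bel(x;\ve)\ge\av{e^\varphi}{[0,1]}=B(x;\ve)$, and this together with~\eqref{nn} yields the asserted identity. The two averages and the exponential average reduce to the elementary integrals $\int_0^p\log(p/t)\,dt=p$ and $\int_0^p t^{-\ve}\,dt=p^{1-\ve}/(1-\ve)$, followed by the algebraic identities tying $a,u,p$ to $x$; these are precisely the computations posed as the homework exercise above, and I would simply carry them out.

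\emph{The main obstacle} is the membership $\varphi\in\BMO_\ve([0,1])$, i.e.
$$
\sup_{I\subset[0,1]}\bigl(\av{\varphi^2}I-\av{\varphi}I^2\bigr)=\ve^2.
$$
On every subinterval $[0,h]$ contained in the logarithmic piece the variance equals $\ve^2$ identically: the variance of $\log(1/t)$ on $[0,h]$ is $1$ for every $h$ (again from the integrals above), and an affine change of the variable $t$ together with an affine change of $\varphi$ scales it only by $\ve^2$. One must then exclude larger values on all other subintervals. Because $\varphi$ is non-increasing and equals its minimum $u$ on the tail $[p,1]$, on any interval $I$ that meets the tail the restriction $\varphi|_I$ is the cut-off from below at height $u$ of the function obtained by extending the logarithmic branch across all of $I$; by the Cut-off Lemma~\ref{norm} this does not increase the variance, which reduces the supremum to intervals abutting the singularity at $t=0$ and shows it equals $\ve^2$. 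This is the step I expect to cost the most effort.

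\emph{The case $\ve\ge1$.} Fix $x$ with $x_2>x_1^2$ (for $x$ on the lower parabola the only test function is a constant and $\Bel(x;\ve)=e^{x_1}$). The same assembly produces a $\varphi\in\BMO_\ve([0,1])$ with Bellman point $x$, and since $x$ lies strictly above the lower boundary its logarithmic piece $[0,p]$ has $p=(x_1-u)/\ve>0$. Then
$$
\av{e^\varphi}{[0,1]}=e^{u}\Bigl(p^{\ve}\!\int_0^p t^{-\ve}\,dt+1-p\Bigr)=+\infty,
$$
since $\int_0^p t^{-\ve}\,dt$ diverges for every $\ve\ge1$ and $p>0$; hence $\Bel(x;\ve)=+\infty$. (Equivalently, one may just note $\Bel(0,\ve^2;\ve)\ge\av{e^{\varphi_0}}{[0,1]}=e^{-\ve}\int_0^1 t^{-\ve}\,dt=\infty$ with $\varphi_0(t)=\ve\log(1/t)-\ve$, and then propagate this to the whole upper boundary via the translation--homogeneity identity and to the remaining points via the main inequality~\eqref{mainJN}.) This settles all cases.
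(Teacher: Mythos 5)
Your proposal is correct and follows essentially the same route as the paper: the upper bound is exactly~\eqref{nn}, the lower bound comes from evaluating $\av{e^\varphi}{}$ at the concatenated logarithmic/constant optimizer and verifying its $\BMO$ norm via the Cut-off Lemma (this is precisely the homework the paper assigns), and the case $\ve\ge1$ is handled by the non-summability of $e^\varphi$. One useful detail you supply that the paper's displayed formula misstates: the logarithmic piece must occupy $[0,p]$ with $p=(x_1-u)/\ve$ (so that $\av\varphi{}=p\ve+u=x_1$), not $[0,x_1-u]$ as written in the source; with that correction your computations and the paper's agree.
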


Indeed, the second statement can be verified by the same extremal function $\varphi,$
because $e^\varphi$ is not summable on $[0,1]$ for $\ve\ge1.$
\bigskip

The first proof of the theorem above appeared in~\cite{Sl} and~\cite{Va}; a complete proof
of this result together with the estimate from below (i.e. the lower Bellman function) and consideration of the dyadic version of the
problem can be found in~\cite{SlVa1} (the online version of this paper is~\cite{SlVa2}).
\bigskip

\noindent {\bf Homework assignment 1.}
{\small \sl \color{blue}
Verify the following properties of the extremal function~$\varphi:$
\begin{itemize}
\item$\av\varphi{[0,1]}=x_1;$
\item$\av{\varphi^2}{[0,1]}=x_2;$
\item$\av{e^\varphi}{[0,1]}=B(x_1,x_2;\ve);$
\item $\varphi\in\BMO_\ve.$
\footnote{{\it Hint}: Due to the cut-off lemma (Lemma~\ref{norm}),
it is sufficient to check that $\log t\in\BMO_1,$ which follows from
$\av{\log^2t}{[c,d]}-\av{\log t}{[c,d]}^2 =1-\frac{cd}{(d-c)^2}\Big(\log\frac dc\Big)^2.$}
\end{itemize}}
\noindent {\bf Homework assignment 2.}
{\small \sl \color{blue}
Recall that we also obtained a second solution,
$$
b(x;\ve)=\frac{1+\sqrt{\ve^2-x_2+x_1^2}}{1+\ve}\;
\exp\!\Big\{x_1-\sqrt{\ve^2-x_2+x_1^2}+\ve\Big\}\,.
$$
Check that this is the solution of the following extremal problem:
$$
\bel(x;\ve)\;\;\df\inf_{\varphi\in\BMO_\ve(J)}\left\{\av{e^\varphi}J
\colon\av{\varphi}J=x_1,\;\av{\varphi^2}J=x_2\right\},
$$
that is check that the Bellman induction works and construct an extremal function
for every $x\in\Omega_\ve.$
}

\section{Dyadic maximal operator}

Let us define the dyadic maximal operator on the set of positive
locally summable functions $w,$ as follows:
$$
(Mw)(t)\;\df\sup_{I\in\D_{\R},\,t\in I}\av wI\,.
$$

We would like to estimate the norm of $M$ as an operator acting from
$L^2(\R)$ to $L^2(\R).$ Even though the operator is defined
on the whole line, we first localize its action to a fixed dyadic interval
$J;$ we will pass to all of $\R$ at the end. Thus, we are looking for the function
$$
\Bel(x_1,x_2;L)\df\sup_{w\ge0}\left\{\av{(Mw)^2}J\colon\;
\av wJ=x_1,\;\av{w^2}J=x_2,\;\sup_{I\supset J,\;I\in\mathcal{D}_\R}\av wI=L\right\}\,.
$$
We need the ``external'' parameter $L$ because $M$ is not truly local: the value
of $Mw$ on an interval $J$ depends not only on the behavior of $w$ on $J,$ but
also on that on the whole line $\R.$ The function $\Bel$ depends on three variables,
and each of them can change when we split the interval of definition. Nevertheless,
we will consider $L$ as a parameter. The reason will become clear a bit later.

As before, $\Bel$ does not depend on $J.$  Its domain is
$$
\Omega\df\left\{(x_1,x_2;L)\colon 0<x_1\le L,\;x_1^2\le x_2\right\},
$$
or, if we consider $L$ as a fixed parameter,
$$
\Omega_L\df\left\{(x_1,x_2)\colon 0<x_1\le L,\;x_1^2\le x_2\right\}.
$$
\begin{lemma}[Main inequality]
Take $(x;L)\in\Omega$ and let the points $(x^\pm;L^\pm)\in\Omega$ be such
that $x=(x^++x^-)/2$ and $L^\pm=\max\{x_1^\pm,L\}.$ Then the following
inequality holds\textup:
\eq[mainMF]{
\Bel(x;L)\ge\frac{\Bel(x^+;L^+)+\Bel(x^-;L^-)}2\,.
}
\end{lemma}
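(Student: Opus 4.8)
The plan is to follow the template established for the dyadic Buckley and John--Nirenberg main inequalities: split the interval $J$ into its halves $J_\pm$, place near-optimal test functions on each half, concatenate them, and check that the concatenated function is admissible for the point $(x;L)$ on the left-hand side. The one new wrinkle is the bookkeeping of the external parameter $L$, and in particular the verification that the value of $Mw$ on $J$ does not exceed the target supremum.

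First I would fix $\eta>0$ and, for each choice of sign, pick a weight $w^\pm\ge0$ on $J_\pm$ with $\av{w^\pm}{J_\pm}=x_1^\pm$, $\av{(w^\pm)^2}{J_\pm}=x_2^\pm$, and external parameter $\sup_{I\supset J_\pm}\av{w^\pm}I=L^\pm$, such that $\av{(Mw^\pm)^2}{J_\pm}\ge\Bel(x^\pm;L^\pm)-\eta$. Here it is essential that $L^\pm=\max\{x_1^\pm,L\}$: when we later regard $w^\pm$ as living on $J_\pm$ as a subinterval of $J$, the intervals $I\in\D_\R$ with $I\supset J_\pm$ that are not contained in $J$ all contain $J$ itself, so the relevant averages over them are controlled by $L$; the only genuinely new containing interval is $J$, whose average is $x_1$ — but $\av wJ=\tfrac12(x_1^++x_1^-)=x_1\le\max\{x_1^+,x_1^-\}\le L^+$ or $L^-$ as appropriate, so nothing exceeds $\max\{L^+,L^-\}$, and one checks this matches the admissibility bound $x_1\le L$ for the point on the left. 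Define $w$ on $J$ as $w^+$ on $J_+$ and $w^-$ on $J_-$.

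Next I would compute $\av{(Mw)^2}J$. For $t\in J_\pm$ one has $(Mw)(t)=\max\{(Mw^\pm)(t),\,\av w{J}\}$ where the second term accounts for the finitely many dyadic intervals containing $J_\pm$ but no smaller; more precisely $(Mw)(t)=\max\{(Mw^\pm)(t),\,L\}$ when $t\in J_\pm$, because every dyadic interval containing $t$ is either contained in $J_\pm$, equal to a dyadic ancestor inside $J$, or contains $J$, and the ancestor averages are dominated by $(Mw^\pm)(t)$ while the ancestors of $J$ contribute at most $L=L^\pm$ already absorbed in $Mw^\pm$ via its external parameter. Consequently $(Mw)(t)=(Mw^\pm)(t)$ for $t\in J_\pm$, whence
\begin{equation*}
\av{(Mw)^2}J=\tfrac12\av{(Mw^+)^2}{J_+}+\tfrac12\av{(Mw^-)^2}{J_-}\ge\frac{\Bel(x^+;L^+)+\Bel(x^-;L^-)}2-\eta.
\end{equation*}
Since $w$ is admissible for $(x;L)$ — its averages are $x_1$, $x_2$ by the splitting of the moments, and its external parameter is $L$ — taking the supremum over all such $w$ and then letting $\eta\to0$ yields \eqref{mainMF}.

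The main obstacle I anticipate is precisely the identity $(Mw)(t)=(Mw^\pm)(t)$ on $J_\pm$, i.e.\ the claim that concatenation does not create new large maximal averages beyond what $L$ already records. This requires care in how the external parameter $L^\pm=\max\{x_1^\pm,L\}$ is defined and used: the term $x_1^\pm$ must be there so that, after concatenation, the average over $J$ itself (which from the viewpoint of $w^\pm$ restricted to $J_\pm$ is a "containing interval") is already accounted for, and the term $L$ so that all genuine ancestors of $J$ are. Once this localization of the maximal function under concatenation is established, the rest is the same concatenation-and-supremum argument as in the earlier lemmas.
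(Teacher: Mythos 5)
Your proposal follows the same overall route as the paper's proof — split $J$ into $J_\pm$, place near-optimal test functions on the halves, concatenate, verify admissibility, and take suprema. But there is a real gap: you never say what $w$ is \emph{outside} $J$, and this is precisely where the external parameter $L$ lives.

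Recall the definition: a test function for the point $(x;L)$ is a weight on all of $\mathbb R$ with $\av wJ=x_1$, $\av{w^2}J=x_2$, and $\sup_{I\supset J,\,I\in\D_{\mathbb R}}\av wI=L$. Your $w$ is only defined on $J$, so the claim ``its external parameter is $L$'' is not yet meaningful, and the quantity $\sup_{I\supsetneq J}\av wI$ appearing implicitly in your discussion of the ancestors of $J$ is undetermined. The paper fixes this by explicitly setting
\[
w(t)=
\begin{cases}
w^\pm(t),&t\in J_\pm,\\
L,&t\notin J.
\end{cases}
\]
With this extension, $\av wI=\frac{|J|}{|I|}x_1+\bigl(1-\frac{|J|}{|I|}\bigr)L\le L$ for every $I\supsetneq J$, with supremum exactly $L$ (since $x_1\le L$), so $w$ is admissible for $(x;L)$. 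The extension is also what makes your key identity $(Mw)(t)=(Mw^\pm)(t)$ on $J_\pm$ provable: for $t\in J_+$, say,
\[
(Mw)(t)=\max\Bigl\{\sup_{I\ni t,\,I\subseteq J_+}\av{w^+}I,\ \av wJ,\ \sup_{I\supsetneq J}\av wI\Bigr\}
=\max\Bigl\{\sup_{I\ni t,\,I\subseteq J_+}\av{w^+}I,\ L\Bigr\},
\]
while $(Mw^+)(t)=\max\bigl\{\sup_{I\ni t,\,I\subseteq J_+}\av{w^+}I,\ L^+\bigr\}$ and $L^+=\max\{x_1^+,L\}$ is absorbed into the first term because $\sup_{I\subseteq J_+}\av{w^+}I\ge x_1^+$. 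Your heuristic that ``the ancestor averages are dominated by $(Mw^\pm)(t)$'' is true, but for the ancestor $J$ itself the chain of inequalities runs through $\av wJ=x_1\le L\le L^+\le(Mw^\pm)(t)$ rather than through anything intrinsic to $w^\pm$ alone, so it is worth spelling out. With the explicit extension in place, the rest of your argument goes through exactly as you describe.

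Also note a small slip in the motivation paragraph: for $t\in J_+$ your statement $(Mw)(t)=\max\{(Mw^+)(t),\,\av wJ\}$ is wrong as written (the right-hand side double-counts nothing but omits the contribution of $I\supsetneq J$ unless the extension is in force, and in any case $\av wJ$ is not the correct second term); the clean statement is the one with the extension displayed above.
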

\begin{proof}
The proof is now standard. Fixing an interval $J$ and a small
number $\eta>0,$ we take a pair of functions $w^\pm$ such that
$$
\Bel(x^\pm;L^\pm)\ge\av{(Mw^\pm)^2}{J_\pm}-\eta
$$
and set
$$
w(t)=
\begin{cases}
w^\pm(t),&\text{if }t\in J_\pm,
\\
\ L,&\text{if }t\notin J\,.
\end{cases}
$$
Then $w$ is a test function corresponding to the Bellman point $(x;L)$
with $(Mw)(t)=(Mw^\pm)(t)$ for $t\in J_\pm.$ Therefore,
\begin{align*}
\Bel(x;L)\ge\av{(Mw)^2}J&=\frac12\big(\av{(Mw^+)^2}{J_+}+
\av{(Mw^-)^2}{J_-}\big)
\\
&\ge\frac12\big(\Bel(x^+;L^+)+\Bel(x^-;L^-)\big)-\eta\,,
\end{align*}
which proves the lemma.
\end{proof}

\begin{cor}[Concavity]
For a fixed $L,$ the function $\Bel$ is concave on $\Omega_L.$
\end{cor}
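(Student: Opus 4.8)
The plan is to obtain midpoint concavity of $\Bel(\,\cdot\,;L)$ as an immediate specialization of the Main Inequality~\eqref{mainMF}, and then to promote the midpoint inequality to genuine concavity. The only step that is more than bookkeeping is this last promotion, and it rests on the (easy) boundedness of $\Bel$ --- which is the single place where the $L^2$-mapping property of the dyadic maximal operator is invoked.

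I would first specialize~\eqref{mainMF} to a fixed value of the parameter. Take $x^\pm\in\Omega_L$ and put $x=\tfrac12(x^++x^-)$. By the definition of $\Omega_L$ one has $0<x_1^\pm\le L$ and $(x_1^\pm)^2\le x_2^\pm$, so $L^\pm:=\max\{x_1^\pm,L\}=L$, clearly $0<x_1\le L$, and, by convexity of $t\mapsto t^2$,
$$
x_2=\tfrac12\big(x_2^++x_2^-\big)\ge\tfrac12\big((x_1^+)^2+(x_1^-)^2\big)\ge x_1^2 ,
$$
so $x\in\Omega_L$ as well. Thus $(x;L)$ and $(x^\pm;L)$ all lie in $\Omega$, and~\eqref{mainMF} becomes
$$
\Bel(x;L)\ge\frac{\Bel(x^+;L)+\Bel(x^-;L)}{2} .
$$
Hence $\Bel(\,\cdot\,;L)$ is midpoint concave on the convex set $\Omega_L$.

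Next I would record that $\Bel(\,\cdot\,;L)$ is finite and bounded on subsets where $x_2$ stays bounded. For an admissible $w$ and $t\in J$, a dyadic interval containing $t$ either lies in $J$ or contains $J$, and the admissibility constraint $\sup_{I\in\D_\R,\,I\supset J}\av wI=L$ pins the second supremum at $L$; hence $(Mw)(t)=\max\{(M^dw)(t),L\}$ for $t\in J$, where $M^dw$ denotes the dyadic maximal function of $w$ relative to $J$. Therefore $L^2\le\big((Mw)(t)\big)^2\le L^2+\big((M^dw)(t)\big)^2$, and averaging over $J$ together with the $L^2$-maximal inequality $\av{(M^dw)^2}J\le4\av{w^2}J$ yields $L^2\le\av{(Mw)^2}J\le L^2+4x_2$. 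Taking the supremum over admissible $w$,
$$
L^2\le\Bel(x;L)\le L^2+4x_2 .
$$

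Finally I would invoke the Bernstein--Doetsch theorem: a real-valued midpoint-convex function on an open convex subset of $\R^2$ that is bounded above is automatically convex and continuous. Applied to $-\Bel(\,\cdot\,;L)$, which by the two previous paragraphs is midpoint convex and bounded above by $-L^2$, this shows that $\Bel(\,\cdot\,;L)$ is concave and continuous on the interior of $\Omega_L$; since the sole admissible weight for a point of the lower parabola $x_2=x_1^2$ is the constant $x_1$ (whence $\Bel\equiv L^2$ there), a routine compactness argument gives continuity up to that boundary and concavity extends to all of $\Omega_L$. I expect the passage from the midpoint inequality to full concavity to be the only genuine obstacle; everything else is a direct application of facts already available --- the Main Inequality for the first half, and the $L^2$-boundedness of the dyadic maximal operator for the boundedness that legitimizes Bernstein--Doetsch.
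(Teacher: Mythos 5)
Your proof is correct and more careful than the paper's. The paper disposes of the corollary in one sentence: for $x^\pm\in\Omega_L$ one has $L^\pm=L$, so the main inequality~\eqref{mainMF} ``becomes the usual concavity condition.'' But the main inequality is only a \emph{midpoint} inequality --- it arises from splitting $J$ into its two dyadic halves --- and midpoint concavity does not by itself give concavity. You close that gap explicitly: you establish the two-sided bound $L^2\le\Bel(x;L)\le L^2+4x_2$ (correctly observing that every dyadic interval meeting $J$ is either contained in $J$ or contains it, and then invoking a non-sharp $L^2$ bound for the localized dyadic maximal operator), and you call on Bernstein--Doetsch to promote midpoint concavity to genuine concavity. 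Using the crude maximal inequality here is not circular, since any finite constant suffices while the section's goal is the sharp one. The step you leave implicit --- passing from the open interior of $\Omega_L$ to the boundary portions $x_1=L$ and $x_2=x_1^2$ --- is indeed routine: the lower parabola is strictly convex, so no non-degenerate chord of $\Omega_L$ has a relative-interior point on it, and the line $x_1=L$ is handled by a one-variable version of the same Bernstein--Doetsch argument or by taking limits from the interior using the continuity you have already secured. What your approach buys, relative to the paper's, is an honest treatment of a regularity point that the Bellman-function literature customarily passes over in silence.
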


\begin{proof}
For any pair $x^\pm\in\Omega_L$, we have $L^\pm=L$ and~\eqref{mainMF}
becomes the usual concavity condition.
\end{proof}

\begin{cor}[Boundary condition]
If the function $\Bel$ is sufficiently smooth\textup, then
\eq[bcN]{
\frac{\partial\Bel}{\partial L}(x;x_1)=0\,.
}
\end{cor}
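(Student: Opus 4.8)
The plan is to pin down $\partial\Bel/\partial L$ on the face $L=x_1$ by a two-sided estimate: monotonicity of $L\mapsto\Bel(x;L)$ gives $\partial\Bel/\partial L\ge0$, while reading the main inequality~\eqref{mainMF} infinitesimally at a point of that face gives $\partial\Bel/\partial L\le0$; together these force the derivative to vanish.

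For the first bound I would show that, with $x$ fixed, $\Bel(x;\cdot)$ is non-decreasing on $[x_1,\infty)$. Take $x_1\le L_1<L_2$ and a test function $w$ that nearly realizes the supremum for $\Bel(x;L_1)$ on $J$. Keep $w$ unchanged on $J$ and modify it only outside $J$: pick a large dyadic over-interval $\hat J\supsetneq J$, set $w$ equal to a suitable constant on $\hat J\setminus J$ and $0$ off $\hat J$, the constant chosen so that $\av{w}{\hat J}=L_2$. One checks that then $\av{w}{I}\le L_2$ for every dyadic $I\supseteq J$, with equality at $\hat J$, so the modified $w$ is admissible for $(x;L_2)$; and for $t\in J$ the maximal function splits as $(Mw)(t)=\max\bigl\{\,\sup_{I\in\D_J,\,t\in I}\av{w}{I},\ L\,\bigr\}$, where only the second entry is affected by the modification and it has increased (while the first is untouched, since $w|_J$ is untouched). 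Hence $\av{(Mw)^2}{J}$ does not decrease, so $\Bel(x;L_1)\le\Bel(x;L_2)$, and smoothness gives $\partial\Bel/\partial L\ge0$, in particular on $L=x_1$.

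For the second bound, fix a point $(x;x_1)$ of the face with $x_1^2<x_2$ and apply~\eqref{mainMF} to the splitting $x^\pm=x\pm(t,0)$ with $t>0$ small, so that $x^+=(x_1+t,x_2)$ has $L^+=\max\{x_1+t,x_1\}=x_1+t$ and $x^-=(x_1-t,x_2)$ has $L^-=\max\{x_1-t,x_1\}=x_1$; both $(x^\pm;L^\pm)$ lie in $\Omega$ once $t$ is small (this is where $x_1^2<x_2$ is used). Writing $B=\Bel$ and Taylor-expanding about $(x_1,x_2;x_1)$, the two $\partial B/\partial x_1$ contributions cancel and the $\partial B/\partial x_2$ contributions are absent since the second coordinates of $x^\pm$ both equal $x_2$, leaving $\frac12\bigl(B(x^+;L^+)+B(x^-;L^-)\bigr)=B(x;x_1)+\frac12\,\frac{\partial B}{\partial L}(x;x_1)\,t+O(t^2)$. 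Substituting into~\eqref{mainMF} gives $0\ge\frac12\,\frac{\partial B}{\partial L}(x;x_1)\,t+O(t^2)$; dividing by $t$ and letting $t\to0$ yields $\frac{\partial B}{\partial L}(x;x_1)\le0$. Combined with the monotonicity bound, $\frac{\partial\Bel}{\partial L}(x;x_1)=0$ at every differentiability point of the face with $x_2>x_1^2$. (On the degenerate edge $x_2=x_1^2$ the sole test function is $w\equiv x_1$ on $J$, giving $\Bel(x_1,x_1^2;L)=L^2$ and $\partial\Bel/\partial L=2L\ne0$ — precisely where the ``sufficiently smooth'' hypothesis breaks down.)

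I expect the monotonicity step to be the main point requiring care: one must alter $w$ outside $J$ so as to raise the supremum of the averages over intervals containing $J$ to exactly $L_2$, without disturbing the averages on $J$ and without letting any such average overshoot $L_2$. Once that is in place, the infinitesimal argument from~\eqref{mainMF} is just routine bookkeeping in the Taylor expansion.
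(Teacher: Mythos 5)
Your proof is correct and follows essentially the same route as the paper: monotonicity of $L\mapsto\Bel(x;L)$ gives $\Bel_L\ge0$, and a first-order Taylor expansion of the main inequality~\eqref{mainMF} across the face $L=x_1$ gives $\Bel_L\le0$. Two of your additions are genuine improvements on the paper's terse exposition: the paper merely asserts that $\Bel_L\ge0$ ``immediately'' follows from the definition, whereas the admissibility constraint $\sup_{I\supset J}\av wI=L$ is an \emph{equality}, not an inequality, so one really does need an argument like your over-interval construction to compare the suprema at two different values of $L$; and your remark that the infinitesimal splitting $x^\pm=x\pm(t,0)$ exits $\Omega$ when $x_2=x_1^2$ (consistent with $\Bel(u,u^2;L)=L^2$, whose $L$-derivative is $2L\ne0$) correctly delimits where the boundary condition can possibly hold, a caveat the paper leaves implicit in its hypothesis ``if $\Bel$ is sufficiently smooth.''
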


\begin{proof}
First of all, we note that the definition of $\Bel$ immediately yields
the inequality $\frac{\partial\Bel}{\partial L}\ge0.$ Now, take an
arbitrary point $x$ on the boundary $x_1=L$ and a pair $x^\pm$ such
that $x=(x^++x^-)/2.$ Let $\Delta_k=(x^+_k-x^-_k)/2, k=1,2,$ and assume,
without loss of generality, that $\Delta_1>0.$ Then
$x^\pm_k=x_k\pm\Delta_k$ and $x^-_1<x_1=L<x^+_1;$ therefore, $L^+=x^+_1$
and $L^-=L.$ Writing the main inequality up to the terms of first
order in $\Delta,$ we get
\begin{align*}
0&\le\Bel(x;L)-\frac12\bigl(\Bel(x^+;L^+)+\Bel(x^-;L^-)\bigr)
\\
&=\Bel(x_1,x_2;x_1)-\frac12
\bigl(\Bel(x_1+\Delta_1,x_2+\Delta_2;x_1+\Delta_1)+
\Bel(x_1-\Delta_1,x_2-\Delta_2;x_1)\bigr)
\\
&\approx\Bel(x;x_1)-\frac12\bigl(\Bel(x;x_1)+\Bel_{x_1}\Delta_1+
\Bel_{x_2}\Delta_2+\Bel_L\Delta_1+\Bel(x;x_1)-\Bel_{x_1}\Delta_1-
\Bel_{x_2}\Delta_2\bigr)
\\
&=-\frac12\Bel_L(x;x_1)\Delta_1\,.
\end{align*}
Since $\Bel_L(x;x_1)\ge0,$ the last inequality is possible only if
$\Bel_L(x;x_1)=0.$
\end{proof}

\begin{lemma}[Homogeneity]
If the function $\Bel$ is sufficiently smooth, then
\eq[homogMF]{
\Bel(x;L)=\frac12x_1\Bel_{x_1}
+x_2\Bel_{x_2}
+\frac12L\,\Bel_L\,.
}
\end{lemma}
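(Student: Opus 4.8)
The plan is to exploit the multiplicative scaling $w\mapsto\tau w$ of the test functions, which yields a weighted-homogeneity identity for $\Bel$, and then to differentiate that identity in $\tau$ at $\tau=1$, just as the homogeneity lemmata in the earlier sections were obtained.

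First I would record how each ingredient in the definition of $\Bel$ transforms under $w\mapsto\tau w$ with $\tau>0$. Since the dyadic maximal operator is positively homogeneous, $M(\tau w)=\tau\,Mw$, hence $\av{(M(\tau w))^2}J=\tau^2\av{(Mw)^2}J$. Likewise $\av{\tau w}J=\tau x_1$, $\av{(\tau w)^2}J=\tau^2 x_2$, and $\sup_{I\supset J,\,I\in\D_\R}\av{\tau w}I=\tau L$; note also that the rescaled point still lies in $\Omega$, since $\tau L\ge\tau x_1$ and $(\tau x_1)^2\le\tau^2 x_2$. Because $w\mapsto\tau w$ is a bijection of the cone of nonnegative locally summable functions onto itself (with inverse multiplication by $\tau^{-1}$), it carries the set of admissible test functions for $(x_1,x_2;L)$ bijectively onto the set of admissible test functions for $(\tau x_1,\tau^2 x_2;\tau L)$. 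Taking suprema on both sides gives the scaling identity
$$
\Bel(\tau x_1,\tau^2 x_2;\tau L)=\tau^2\,\Bel(x_1,x_2;L),\qquad\tau>0\,.
$$

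Next, using the assumed smoothness of $\Bel$, I would differentiate this identity with respect to $\tau$ and set $\tau=1$. The chain rule applied to the left-hand side produces $x_1\Bel_{x_1}+2x_2\Bel_{x_2}+L\Bel_L$ evaluated at $(x_1,x_2;L)$, while the right-hand side gives $2\Bel(x_1,x_2;L)$. Dividing by $2$ yields exactly~\eqref{homogMF}.

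There is no serious obstacle here; the one point that needs a little care is the bijectivity claim — verifying that rescaling maps the admissible test functions for one Bellman point onto \emph{all} admissible test functions for the rescaled point, so that the supremum transforms with equality rather than merely an inequality. This is immediate once one observes that the scaling map is inverted by scaling with $\tau^{-1}$. A secondary remark is that the lemma is stated conditionally on the smoothness of $\Bel$; the scaling identity itself requires no regularity, and only the passage to its differential form~\eqref{homogMF} uses differentiability, which is precisely the hypothesis invoked.
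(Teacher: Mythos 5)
Your proof is correct and follows exactly the same route as the paper: derive the scaling identity $\Bel(\tau x_1,\tau^2 x_2;\tau L)=\tau^2\Bel(x;L)$ from the multiplicative homogeneity of the setup, then differentiate at $\tau=1$. You simply spell out in more detail the verification that the scaling map carries test functions bijectively and transforms each Bellman coordinate as claimed, which the paper leaves implicit.
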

\begin{proof}
As before, together with a test function $w$ we consider the function
$\tilde w=\tau w$ for $\tau>0.$ Comparing the Bellman functions at the
corresponding Bellman points gives us the equality
$$
\Bel(\tau x_1,\tau^2x_2;\tau L)=\tau^2\Bel(x_1,x_2;L).
$$
Differentiating this identity with respect to $\tau$ at the point $\tau=1$
proves the lemma.
\end{proof}

Before we start looking for a Bellman candidate, let us state one more
boundary condition --- in fact, the principal one.
\begin{lemma}[Boundary condition]
\eq[bcD]{
\Bel(u,u^2;L)=L^2\,.
}
\end{lemma}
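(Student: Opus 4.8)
The plan is to exploit the rigidity of the lower boundary. A point of the form $(u,u^2;L)$ lies on the face $x_2=x_1^2$, and for an admissible test function $w$ the constraints $\av wJ=u$, $\av{w^2}J=u^2$ say exactly that $\av{w^2}J=\av wJ^2$, i.e. that the variance of $w$ over $J$ vanishes. Hence the only admissible behaviour on $J$ is $w\equiv u$ on all of $J$; the values of $w$ off $J$ remain free, subject only to $\sup_{I\supset J,\,I\in\D_\R}\av wI=L$, and (as we shall see) they do not affect the averaged quantity. Note also that such $w$ exist, e.g. $w=u$ on $J$ and $w=L$ on $\R\setminus J$.

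Next I would compute $(Mw)(t)$ for $t\in J$. Since any two dyadic intervals are either disjoint or nested, every dyadic $I$ with $t\in I$ satisfies either $I\subseteq J$ or $I\supset J$. In the first case $\av wI=u$ because $w\equiv u$ on $J$; in the second case $\av wI\le L$, and the supremum of these averages is exactly $L$ by the external constraint. Therefore
$$
(Mw)(t)=\max\Big\{u,\ \sup_{I\supset J,\,I\in\D_\R}\av wI\Big\}=\max\{u,L\}=L
$$
for every $t\in J$, where in the last step I use that $u=x_1\le L$ on $\Omega$. Consequently $\av{(Mw)^2}J=L^2$ for every admissible $w$.

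Finally, since $w\equiv u$ on $J$ is the only admissible behaviour on $J$ and the value $\av{(Mw)^2}J=L^2$ is independent of the (constrained but otherwise free) values of $w$ outside $J$, the supremum defining $\Bel$ equals $L^2$, i.e. $\Bel(u,u^2;L)=L^2$. There is essentially no obstacle here: the only points requiring a little care are handling the supremum in the definition of $Mw$ via the nestedness of dyadic intervals, and remembering that the domain $\Omega$ forces $u\le L$, which is precisely what makes the contribution of the large intervals $I\supset J$ dominate.
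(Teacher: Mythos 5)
Your proof is correct and follows essentially the same approach as the paper's: deduce from the vanishing variance $\av{w^2}J-\av wJ^2=0$ that $w\equiv u$ on $J$, and then observe that $(Mw)(t)=\max\{u,L\}=L$ on $J$ because of the nestedness of dyadic intervals and the external constraint $\sup_{I\supset J}\av wI=L$. The paper states the conclusion more tersely, while you spell out the dyadic dichotomy and the role of $u\le L$, but the reasoning is the same.
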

\begin{proof}
The only test function corresponding to the point $x=(u,u^2)$ is the
function identically equal to $u$ on the interval $J.$ Hence,
$Mw$ is identically $L$ on this interval.
\end{proof}
\begin{remark}
Note that the boundary $x_1=0$ is not accessible\textup, that is it does not belong
to the domain\textup: if $x_1=0,$ $w$ must be identically zero on $J,$ which means that
$x_2=0.$ Therefore\textup, no boundary condition can be stated on that boundary.
\end{remark}

We are now ready to search for a Bellman candidate. To this end, we will, as
before, solve a \ma boundary value problem.
The arguments why we are looking for a solution of the Monge--Amp\`ere
equation are the same as before: the concavity condition forces
us to look for a function whose Hessian is negative and the optimality
condition requires the Hessian to be degenerate.

Again, we are looking for a solution in the form
$$
B(x)=t_0+x_1t_1+x_2t_2
$$
that is linear along extremal trajectories given by
$$
dt_0+x_1dt_1+x_2dt_2=0.
$$
Let us parameterize the extremal lines by the first coordinate of their
points of intersection with the boundary $x_2=x_1^2.$ Since the boundary
$x_1=0$ is not accessible, such an extremal line can either be vertical
(i.e. parallel to the $x_2$-axis) or slant to the right, in which case
it intersects the boundary $x_1=L$ at a point, say, $(L,v).$ ({\label{hw} \bf A homework
question}: {\sl \color{blue} why can an extremal line never connect two
points of the boundary $x_2=x_1^2?$})

\begin{figure}[ht]
\begin{center}
\begin{picture}(200,200)
\thinlines
\put(10,0){\vector(0,1){180}}
\put(0,10){\vector(1,0){180}}
\linethickness{.5pt}
\thicklines
\qbezier[1000](10,10)(90,10)(170,70)
\qbezier[500](150,56)(150,270)(150,56)
\qbezier[1000](95,27)(205,253)(95,27)
\multiput(150,10)(0,3){16}{\circle*{1}}
\multiput(95,10)(0,3){7}{\circle*{1}}
\multiput(10,27)(3,0){29}{\circle*{1}}
\multiput(10,140)(3,0){47}{\circle*{1}}
\put(185,5){\footnotesize $x_1$}
\put(5,185){\footnotesize $x_2$}
\put(148,0){\footnotesize $L$}
\put(0,25){\footnotesize $u^2$}
\put(2,138){\footnotesize $v$}
\put(93,0){\footnotesize $u$}
\put(95,27){\circle*{2}}
\put(150,140){\circle*{2}}
\put(175,75){\footnotesize $x_2=x_1^2$}
\end{picture}
\caption{The extremal trajectory passing through $(u,u^2)$ and $(L,v)$}
\end{center}
\end{figure}
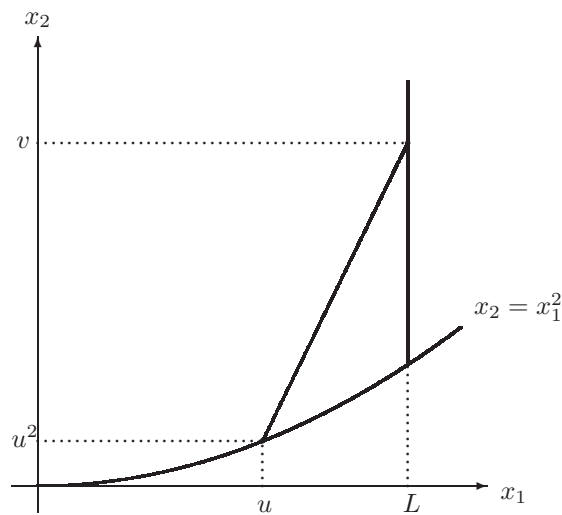

The former case is very simple. Since $B$ is linear on each vertical
line and satisfies the boundary condition~\eqref{bcD}, it has the form
$$
B(x;L)=k(x_1,L)(x_2-x_1^2)+L^2.
$$
Since $B_{x_2x_2}=0$ and the matrix $B_{x_ix_j}$ must be non-positive,
we have $B_{x_1x_2}=k_{x_1}=0,$ i.e. $k$ does not depend on $x_1,$
$k=k(L).$ Now we use the second boundary condition $B_L(x_1,x_2;x_1)=0,$
which turns into $k'(x_1)(x_2-x_1^2)+2x_1=0.$ The last equation has
no solution, therefore this case is impossible, at least in the whole domain $\Omega_L.$

Consider the latter case, when the extremal line goes from the bottom boundary to
the right boundary, as shown in the picture. The boundary condition~\eqref{bcD}
on the bottom boundary gives us
\eq[mf1]{
t_0+ut_1+u^2t_2=L^2,
}
and the condition~ \eqref{bcN} on the right boundary, together with~\eqref{homogMF}, yields
\eq[mf2]{
t_0+Lt_1+vt_2=\frac12Lt_1+vt_2.
}
From the last equation, we get
\eq[mf3]{
t_0+\frac12Lt_1=0.
}
Now we differentiate~\eqref{mf1},
$$
(dt_0+udt_1+u^2dt_2)+(t_1+2ut_2)du=0,
$$
and use the fact that the point $(u,u^2)$ is on the trajectory, i.e.
$dt_0+udt_1+u^2dt_2=0.$ Thus,
$$
(t_1+2ut_2)du=0.
$$
This equations gives us two possibilities: either $u=\const,$ producing a family
of trajectories all passing through the point $(u,u^2),$ or $t_1+2ut_2=0.$
The first possibility cannot give a foliation of the whole $\Omega_L,$ since it
would result in trajectories connecting two points of the bottom boundary (an
impossibility, by the earlier \hyperref[hw]{\color{blue} homework question}). Therefore, let
us consider the second possibility, i.e.
\eq[mf4]{
t_1+2ut_2=0.
}
Solving the system of three linear equations,~\eqref{mf1}, \eqref{mf3},
and~\eqref{mf4}, of three variables $t_0,\;t_1,$ and $t_2$, we obtain:
\begin{align*}
t_0&=\frac{L^3}{L-u}\,,&\qquad t_0'&=\frac{L^3}{(L-u)^2}\,,
\\
t_1&=-\frac{2L^2}{L-u}\,,&\qquad t_1'&=-\frac{2L^2}{(L-u)^2}\,,
\\
t_2&=\frac{L^2}{u(L-u)}\,,&\qquad t_2'&=-\frac{L^2(L-2u)}{u^2(L-u)^2}\,.
\end{align*}
 Now we can plug the derivatives of $t_i$ into the equation of
extremal trajectories \\ $dt_0+x_1dt_1+x_2dt_2=0:$
\eq[mf5]{
\frac{L^3}{(L-u)^2}-x_1\frac{2L^2}{(L-u)^2}-
x_2\frac{L^2(L-2u)}{u^2(L-u)^2}=0\,,
}
or
$$
x_2=\frac{2u^2}{2u-L}\Big(x_1-\frac L2\Big)\,.
$$
We see that this is a ``fan'' of lines passing through the point $(L/2,0).$
However, those elements of this fan that intersect the ``forbidden'' boundary $x_1=0$
cannot be extremal trajectories. Therefore, the acceptable lines foliate
not the whole domain $\Omega_L,$ but only the sub-domain $x_1\ge L/2.$ To foliate
the rest, we return to considering vertical lines. Earlier, we have refused this type
of trajectories for the whole domain $\Omega_L,$ since the foliation so produced would
not give a function satisfying the boundary condition on the line $x_1=L.$ However,
such trajectories are perfectly suited for foliating the sub-domain $x_1\le L/2,$
especially because the boundary of the two sub-domains, the vertical line $x_1=L/2,$
fits as an element of both foliations. On this line, we have
$$
t_0=2L^2,\qquad t_1=-4L,\qquad t_2=4,
$$
and so
$$
B(L/2,x_2;L)=2L^2-4L(L/2)+4x_2=4x_2.
$$
As we have seen, the Bellman candidate on the vertical trajectories
must be of the form
$$
B(x;L)=k(L)(x_2-x_1^2)+L^2.
$$
To get $B=4x_2$ on the line $x_1=L/2,$ we have to take $k(L)=4,$ which gives the
following Bellman candidate in the left half of $\Omega_L:$
$$
B(x;L)=4(x_2-x_1^2)+L^2.
$$

To have an explicit formula for the Bellman candidate in the right
half of $\Omega_L,$ we need an expression for $u,$ which
we find solving equation~\eqref{mf5}:
$$
u=\frac{\sqrt{x_2}L}{\sqrt{x_2}+\sqrt{x_2-L(2x_1-L)}}\,.
$$
This yields
$$
B(x;L)=\big(\sqrt{x_2}+\sqrt{x_2-L(2x_1-L)}\big)^2.
$$
Finally, our Bellman candidate in $\Omega$ is given by
\eq[BFMF]{
B(x;L)=
\begin{cases}
\qquad4(x_2-x_1^2)+L^2,&0<x_1\le\frac L2,\ x_2\ge x_1^2,
\\
\big(\sqrt{x_2}+\sqrt{x_2-L(2x_1-L)}\big)^2,
&\frac L2\le x_1\le L,\ x_2\ge x_1^2.\rule{0pt}{18pt}
\end{cases}
}
\bigskip

Now, we start proving that the Bellman candidate just found is indeed
the Bellman function of our problem.

\begin{lemma}
The function defined by~\eqref{BFMF} satisfies the main
inequality~\eqref{mainMF}.
\end{lemma}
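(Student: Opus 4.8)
The plan is to reduce the main inequality to a single concavity statement plus a monotonicity estimate in the parameter $L$, by extending the formula~\eqref{BFMF} past the wall $x_1=L$. For fixed $L>0$ put $R=R(x;L)\df\sqrt{x_2-L(2x_1-L)}$ and define
$$
\widetilde B(x;L)\df
\begin{cases}
\,4(x_2-x_1^2)+L^2,&0<x_1\le L/2,\\
\,\big(\sqrt{x_2}+R\big)^2,&x_1\ge L/2,
\end{cases}
$$
on the set $W_L\df\{x_1>0:\ x_2>L(2x_1-L)\}$, which is exactly where $R$ is real and which, being an intersection of two half-planes, is convex. Since $x_1^2\ge L(2x_1-L)=2Lx_1-L^2$ (the right-hand side is the tangent to the parabola $x_2=x_1^2$ at $(L,L^2)$), we have $\Omega_L\subset\overline{W_L}$ and $\widetilde B(\cdot;L)=B(\cdot;L)$ on $\Omega_L$.

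\textbf{Step 1: $\widetilde B(\cdot;L)$ is concave on $W_L$.} On $\{x_1<L/2\}$ the Hessian is $\operatorname{diag}(-8,0)\le0$. On $\{x_1>L/2\}$ a direct computation gives $\widetilde B_{x_1x_1}=-2L^2\sqrt{x_2}\,R^{-3}<0$; moreover, since $\widetilde B$ was constructed on $\Omega_L$ as a solution of the homogeneous \ma equation, the identity $\widetilde B_{x_1x_1}\widetilde B_{x_2x_2}=(\widetilde B_{x_1x_2})^2$, which holds on the nonempty open set $\{L/2<x_1<L,\ x_2>x_1^2\}$, persists throughout the connected real-analytic domain $\{x_1>L/2\}\cap W_L$; hence the Hessian is negative semidefinite there as well. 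Finally the two pieces glue in a $C^1$ fashion along $x_1=L/2$: from either side one computes $\widetilde B=4x_2$, $\widetilde B_{x_1}=-4L$, $\widetilde B_{x_2}=4$ on that line. A $C^1$ function that is concave on each side of a hyperplane is concave on the ambient convex set, so $\widetilde B(\cdot;L)$ is concave on $W_L$.

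\textbf{Step 2: monotonicity in $L$ and conclusion.} For fixed $x$ with $x_1\ge\ell/2$,
$$
\frac{\partial}{\partial\ell}\,\widetilde B(x;\ell)=2(\ell-x_1)\Big(1+\tfrac{\sqrt{x_2}}{\sqrt{x_2-2\ell x_1+\ell^2}}\Big)\le0\qquad\text{whenever }\ell\le x_1,
$$
so that $\widetilde B(x;L)\ge\widetilde B(x;x_1)=B(x;x_1)$ whenever $x_1\ge L$ — the quantitative form of $\partial B/\partial L\ge0$ together with the boundary condition~\eqref{bcN}. Now take $(x;L)$ and $(x^\pm;L^\pm)$ as in the lemma. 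Since $x_1=\tfrac12(x_1^++x_1^-)\le L$, at most one of $x_1^\pm$ can exceed $L$. If neither does, then $L^\pm=L$, all of $x,x^\pm$ lie in $\Omega_L\subset W_L$, and~\eqref{mainMF} is precisely the midpoint concavity of $\widetilde B(\cdot;L)$ from Step~1. If, say, $x_1^+>L\ge x_1^-$, then $L^+=x_1^+$ and $L^-=L$; the points $x^\pm$ lie above the parabola, hence above its tangent at $(L,L^2)$, hence in $W_L$, so Step~1 gives $B(x;L)=\widetilde B(x;L)\ge\tfrac12\widetilde B(x^+;L)+\tfrac12\widetilde B(x^-;L)$. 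Here $\widetilde B(x^-;L)=B(x^-;L)=B(x^-;L^-)$, while Step~2 (applicable since $x_1^+>L$) gives $\widetilde B(x^+;L)\ge B(x^+;x_1^+)=B(x^+;L^+)$; combining these proves~\eqref{mainMF}. The case $x_1^->L$ is symmetric.

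I expect Step~1 to be the crux: one must verify the $C^1$ matching along $x_1=L/2$ and be sure that the Hessian of the right-hand piece stays negative semidefinite (and degenerate) on \emph{all} of $W_L$, not merely on $\Omega_L$ — which is why I would invoke analytic continuation of the \ma identity rather than attempt a second brute-force determinant computation off the parabola. Everything else (the two derivative computations, the case bookkeeping) is routine.
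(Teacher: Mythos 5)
Your proposal is correct, and it takes a genuinely different route from the paper's. The paper also reduces~\eqref{mainMF} to ordinary concavity by extending $B$ past the wall $x_1=L$, but it does so by the substitution $L\mapsto\max\{L,x_1\}$: it sets $\tilde B(x;L)=B(x;L)$ for $x_1\le L$ and $\tilde B(x;L)=B(x;x_1)=\bigl(\sqrt{x_2}+\sqrt{x_2-x_1^2}\bigr)^2$ for $x_1\ge L$, on the domain $\{x_1>0,\ x_2\ge x_1^2\}$. With that choice the identity $\tilde B(x^\pm;L)=B(x^\pm;L^\pm)$ holds automatically, so~\eqref{mainMF} \emph{is} the midpoint concavity of $\tilde B(\cdot;L)$, with no further step needed; the price is a fresh verification that the new third piece is concave, which the paper does by a direct computation of $\tilde B_{x_ix_j}$ in $\{x_1>L\}$ showing $\tilde B_{x_1x_1}<0$ and $\det D^2\tilde B>0$ there, together with a $C^1$-matching check at $x_1=L$. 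You instead keep the \emph{same} algebraic formula $(\sqrt{x_2}+R)^2$ past $x_1=L$, on the convex set $W_L$. This lets you inherit the degenerate-Hessian property by analytic continuation of the \ma identity (a clean observation that avoids the brute-force determinant), but since now $\widetilde B(x^+;L)\ne B(x^+;L^+)$ when $x_1^+>L$, you must supplement concavity with the monotonicity estimate $\partial_\ell\widetilde B(x;\ell)\le0$ for $\ell\le x_1$ to recover $\widetilde B(x^+;L)\ge B(x^+;L^+)$. Your $C^1$-gluing at $x_1=L/2$ and the case bookkeeping (at most one of $x_1^\pm$ exceeds $L$, since their mean is $\le L$) are the same in spirit as the paper's. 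In short: the paper trades an extra determinant computation for a one-line reduction to concavity; you trade a short monotonicity lemma for avoiding that computation via analytic continuation. Both are complete.
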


\begin{proof}
Let us define a new function $\tilde B$ in the domain \\$\tilde\Omega\df
\{x=(x_1,x_2)\colon\,x_1>0,\,x_2\ge x_1^2\}:$
$$
\tilde B(x;L)\df
\begin{cases}
B(x;L),&0<x_1\le L,\ x_2\ge x_1^2,
\\
B(x;x_1),
&x_1\ge L,\ x_2\ge x_1^2,\rule{0pt}{18pt}
\end{cases}
$$
or
$$
\tilde B(x;L)=
\begin{cases}
4(x_2-x_1^2)+L^2,&0<x_1\le\frac L2,\ x_2\ge x_1^2,
\\
\big(\sqrt{x_2}+\sqrt{x_2-L(2x_1-L)}\,\big)^2,
&\frac L2\le x_1\le L,\ x_2\ge x_1^2.\rule{0pt}{18pt}
\\
\big(\sqrt{x_2}+\sqrt{x_2-x_1^2)}\,\big)^2,
&x_1\ge L,\ x_2\ge x_1^2.\rule{0pt}{18pt}
\end{cases}
$$
Let us calculate the first partial derivatives:
$$
\tilde B_{x_1}(x;L)=
\begin{cases}
-8Lx_1,&0<x_1\le\frac L2,
\\
\ds-2L\Big(1+\frac{\sqrt{x_2}}{\sqrt{x_2-L(2x_1-L)}}\Big),
&\frac L2\le x_1\le L,\rule{0pt}{25pt}
\\
\ds-2x_1\Big(1+\frac{\sqrt{x_2}}{\sqrt{x_2-x_1^2}}\Big),
&x_1\ge L,\ x_2\ge x_1^2.
\end{cases}
$$

$$
\tilde B_{x_2}(x;L)=
\begin{cases}
4,&0<x_1\le\frac L2,
\\
\ds\frac{\big(\sqrt{x_2}+\sqrt{x_2-L(2x_1-L)}\big)^2}
{\sqrt{x_2}\sqrt{x_2-L(2x_1-L)}},
&\frac L2\le x_1\le L,\rule{0pt}{25pt}
\\
\ds\frac{\big(\sqrt{x_2}+\sqrt{x_2-x_1^2}\big)^2}
{\sqrt{x_2}\sqrt{x_2-x_1^2}},
&\frac L2\le x_1\le L.
\end{cases}
$$
From these expressions we see that our function $\tilde B$ is
$C^1$-smooth. Since the second derivative
$$
\tilde B_{x_1x_1}(x;L)=
\begin{cases}
-8L,&0<x_1\le\frac L2,
\\
\ds-\frac{2L^2\sqrt{x_2}}{\big(x_2-L(2x_1-L)\big)^{3/2}},
&\frac L2\le x_1\le L,\rule{0pt}{25pt}
\\
\ds-2\Big(\frac{\sqrt{x_2}}{\sqrt{x_2-x_1^2}}\Big)^3-2,
&x_1\ge L,\ x_2\ge x_1^2,
\end{cases}
$$
is negative, one can check the concavity of $\tilde B$ in the domain
$\Omega_+\!\!\df\!\!\{x: x_1>0, x_2\ge x_1^2\}$ by verifying that the
determinant of the Hessian matrix is non-negative. We know that this
determinant is zero in $\Omega_L$ and, therefore, need to calculate
the second derivatives of $\tilde B$ only in the domain $x_1>L,$
where $\tilde B(x_1,x_2)=B(x_1,x_2;x_1).$ In this domain, we have
\begin{align*}
\tilde B_{x_1x_2}&=\frac{x_1^3}{\sqrt{x_2}}\big(x_2-x_1^2\big)^{-3/2},
\\
\tilde B_{x_2x_2}&=-\frac{x_1^4}{2x_2^{3/2}}\big(x_2-x_1^2\big)^{-3/2},
\end{align*}
which yields
$$
\tilde B_{x_1x_1}\tilde B_{x_2x_2}-\tilde B_{x_1x_2}^2
=\frac{x_1^4}{x_2(x_2-x_1^2)^2_{\phantom2}}+
\frac{x_1^4}{x_2^{3/2}(x_2-x_1^2)^{3/2}_{\phantom2}}>0\,.
$$
The concavity just proved immediately implies~\eqref{mainMF}. Indeed,
we have proved that the function $\tilde B$ is locally concave in each
sub-domain of $\Omega_+,$ as well as $C^1$-smooth in the whole domain;
therefore, it is concave everywhere in $\Omega_+.$ Furthermore,
relation~\eqref{mainMF} is a special case of the concavity condition
on the function $\tilde B:$ $x^-$ and $x$ are in the sub-domain
$\Omega_L,$ while $x^+$ may be either in $\Omega_L$ or the sub-domain $x_1>L.$
\end{proof}

\begin{lemma}[Bellman induction]
For any continuous function $B$ satisfying the main
inequality~\eqref{mainMF} and the boundary
condition~\eqref{bcD}\textup, we have
$$
\Bel(x;L)\le B(x;L).
$$
\end{lemma}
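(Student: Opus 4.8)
The plan is to run the same Bellman induction as in the previous sections, the one new wrinkle being that the external parameter $L$ changes as one passes to finer dyadic generations and, in the limit, is exactly reconstituted as the value of the dyadic maximal function. Fix a dyadic interval $J$ and a test function $w$ corresponding to $(x;L),$ so $\av wJ=x_1,$ $\av{w^2}J=x_2,$ and $\sup_{I\in\D_\R,\,I\supseteq J}\av wI=L.$ For every dyadic $I\subseteq J$ put $L^I\df\sup_{I'\in\D_\R,\,I'\supseteq I}\av w{I'};$ since $x_1\le L$ we have $L^J=L,$ and since the dyadic intervals strictly containing a child $I_\pm$ are precisely those containing its parent $I,$ one checks that $L^{I_\pm}=\max\{\av w{I_\pm},L^I\}.$ Hence for each such $I$ the triples $(x^I;L^I)$ and $(x^{I_\pm};L^{I_\pm})$ are admissible in the main inequality~\eqref{mainMF}.

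Applying~\eqref{mainMF} to $J,$ then to $J_\pm,$ and so on down to the $n$-th generation, one gets, exactly as in the earlier inductions,
$$
|J|\,B(x^J;L)\ \ge\ \sum_{I\in\D_J^n}|I|\,B(x^I;L^I)\ =\ \int_J B\bigl(x^{(n)}(s);L^{(n)}(s)\bigr)\,ds,
$$
where $x^{(n)}(s)=x^I$ and $L^{(n)}(s)=L^I$ for $s\in I\in\D_J^n.$ The Lebesgue differentiation theorem gives $x^{(n)}(s)\to(w(s),w(s)^2)$ for a.e.\ $s.$ For a.e.\ $s$ the dyadic intervals containing $s$ form a chain, and the family $\{I'\in\D_\R:\,I'\supseteq I_n(s)\}$ — where $I_n(s)$ is the generation-$n$ interval containing $s$ — increases with $n$ to the family of all dyadic intervals containing $s;$ therefore $L^{(n)}(s)\uparrow(Mw)(s)$ a.e.

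It remains to pass to the limit and use the boundary condition~\eqref{bcD}. Suppose first that $w$ is bounded and bounded below by a positive constant; then $\bigl(x^{(n)}(s);L^{(n)}(s)\bigr)$ ranges over a fixed compact subset of $\Omega,$ on which $B$ is bounded, so dominated convergence together with the continuity of $B$ yields
$$
|J|\,B(x;L)\ \ge\ \int_J B\bigl(w(s),w(s)^2;(Mw)(s)\bigr)\,ds\ =\ \int_J (Mw)(s)^2\,ds\ =\ |J|\,\av{(Mw)^2}J,
$$
the middle equality being precisely the boundary condition~\eqref{bcD} for $B$ applied at each $s$ with $u=w(s)$ and external level $(Mw)(s).$ For an arbitrary test function $w\in L^2(J)$ one reduces to this case by truncating $w$ from above and from below — say replacing $w$ by $\min\{w,N\}\vee N^{-1}$ — and letting the truncation parameters run to their limits, using monotone and dominated convergence on the right and the continuity of $B$ in all three variables on the left. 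Taking the supremum over all admissible $w$ gives $B(x;L)\ge\Bel(x;L).$

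The genuine difficulty here is the limiting step. One must notice that the running external levels $L^{(n)}(s)$ converge a.e.\ to the dyadic maximal function $(Mw)(s)$ — the feature that distinguishes this induction from the earlier ones — and then justify interchanging limit and integral. This is not automatic, because $B$ need not be globally bounded and the Bellman points $x^{(n)}(s)$ may approach the inaccessible boundary $x_1=0;$ both issues are handled by first establishing the estimate for test functions that are bounded and bounded away from $0,$ and only then approximating.
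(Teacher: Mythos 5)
Your proof follows essentially the same route as the paper's: apply the main inequality generation by generation, identify $x^{(n)}(s)\to(w(s),w(s)^2)$ by Lebesgue differentiation and $L^{(n)}(s)\uparrow(Mw)(s)$ by the nested structure of dyadic intervals, pass to the limit for bounded test functions, then approximate. The one thing you add beyond the paper's terse treatment — truncating $w$ from below as well as above, so that the Bellman points $(w(s),w(s)^2)$ stay away from the inaccessible boundary $x_1=0$ where continuity of $B$ on $\Omega$ gives no uniform bound — is a legitimate and worthwhile precaution that the paper glosses over with ``for bounded $w$ we can pass to the limit''; it does not change the argument's structure, only tightens its justification.
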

\begin{proof}
The proof is standard. First, we fix a test function $w$ on $\R$ and
a dyadic interval $J.$ This gives us a Bellman point $(x;L).$ Then we
start splitting the interval $J,$ while repeatedly applying the main inequality:
\begin{align*}
|J|B(x;L)&\ge|J_+|B(x^{J_+};L^{J_+})+|J_-|B(x^{J_-};L^{J_-})
\\
&\ge\sum_{I\in\D_n}|I|B(x^I,L^I)
=\int_JB\big(x^{(n)}(s);L^{(n)}(s)\big)\,ds\,,
\end{align*}
where $(x^{(n)}(s);L^{(n)}(s))=(x^I;L^I),$ when $s\in I,$ $I\in\D_n.$
By the Lebesgue differentiation theorem, we have
$x^{(n)}(s)\to\big(w(s),w^2(s)\big)$ and by the definition of the
maximal function $L^{(n)}(s)\to(Mw)(s)$ almost everywhere. For bounded $w$
we can pass to the limit and obtain
$$
B(x;L)\ge\av{(Mw)^2(s)}J.
$$
Then, approximating, as before, an arbitrary test function $w$ by its
bounded cut-offs, we get the same inequality for all $w$, which immediately gives
the required property: $B(x;L)\ge\Bel(x;L).$
\end{proof}

\begin{cor}
\label{aboveMF}
For the function $B$ given by~\eqref{BFMF}, the inequality
$$
B(x;L)\ge\Bel(x;L)
$$
holds.
\end{cor}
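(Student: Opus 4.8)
The plan is to read this corollary off directly from the two immediately preceding lemmas, so essentially no new work is required: the Bellman induction lemma says that any continuous function $B$ on $\Omega$ satisfying the main inequality~\eqref{mainMF} and the boundary condition~\eqref{bcD} dominates $\Bel$, and the lemma before it established that the function in~\eqref{BFMF} does satisfy~\eqref{mainMF}. Thus all I need to check is that the piecewise formula~\eqref{BFMF} defines a continuous function on $\Omega$ and that it meets~\eqref{bcD} on the lower parabola; once those two points are in place, the induction lemma applies verbatim and gives $B(x;L)\ge\Bel(x;L)$.

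First I would verify continuity across the interface $x_1=L/2$. Approaching from the left half, $B=4(x_2-x_1^2)+L^2\big|_{x_1=L/2}=4x_2-L^2+L^2=4x_2$; approaching from the right half, $B=\big(\sqrt{x_2}+\sqrt{x_2-L(2x_1-L)}\big)^2\big|_{x_1=L/2}=\big(\sqrt{x_2}+\sqrt{x_2}\big)^2=4x_2$, so the two branches agree. Within each half the formula is manifestly continuous, the only subtlety being that the radicand $x_2-L(2x_1-L)$ be nonnegative for $L/2\le x_1\le L$; but on $\Omega_L$ one has $x_2\ge x_1^2$, hence $x_2-L(2x_1-L)\ge x_1^2-2Lx_1+L^2=(x_1-L)^2\ge0$, and the square root is well defined. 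Next I would check~\eqref{bcD}: on the lower boundary $x=(u,u^2)$, if $0<u\le L/2$ then $B=4(u^2-u^2)+L^2=L^2$, while if $L/2\le u\le L$ then $B=\big(\sqrt{u^2}+\sqrt{u^2-L(2u-L)}\big)^2=\big(u+\sqrt{(u-L)^2}\big)^2=\big(u+(L-u)\big)^2=L^2$, as required.

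With continuity and~\eqref{bcD} confirmed, and~\eqref{mainMF} already proved for this $B$, the Bellman induction lemma yields $\Bel(x;L)\le B(x;L)$ for every $(x;L)\in\Omega$, which is the assertion. I do not anticipate any genuine obstacle here — the only place demanding a moment's care is matching the two branches of~\eqref{BFMF} along $x_1=L/2$ and confirming the radicands stay nonnegative, both of which are the short computations indicated above; the substantive content (the local concavity computation underlying~\eqref{mainMF}) has already been carried out.
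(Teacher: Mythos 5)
Your proof is correct and matches the paper's (implicit) argument: the corollary is stated immediately after the lemma verifying that \eqref{BFMF} satisfies \eqref{mainMF} and the Bellman induction lemma, and is meant to follow from them exactly as you describe. Your additional checks — continuity across $x_1=L/2$, nonnegativity of the radicand, and the boundary condition \eqref{bcD} on both branches — are the right things to verify so that the induction lemma's hypotheses are met, and all three computations are correct.
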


To prove the converse inequality, we need to construct an optimizer.
However, in the present setting we have no test function realizing the supremum
in the definition of the Bellman function. Thus, an optimizer will be given
by a sequence of test functions.

\begin{figure}[ht]
\begin{center}
\begin{picture}(200,200)
\thinlines
\put(10,0){\vector(0,1){180}}
\put(0,10){\vector(1,0){180}}
\put(60,60){\vector(1,-1){15}}
\linethickness{.5pt}
\thicklines
\qbezier[1000](10,10)(90,10)(170,70)
\qbezier[100](10,10)(90,10)(170,181)
\qbezier[500](150,56)(150,270)(150,56)
\qbezier[1000](81,10)(250,330)(81,10)
\multiput(150,10)(0,3){16}{\circle*{1}}
\multiput(88,10)(0,3){5}{\circle*{1}}
\multiput(10,24)(3,0){27}{\circle*{1}}
\multiput(10,140)(3,0){47}{\circle*{1}}
\put(185,5){\footnotesize $x_1$}
\put(5,185){\footnotesize $x_2$}
\put(148,0){\footnotesize $L$}
\put(0,22){\footnotesize $u^2$}
\put(2,138){\footnotesize $v$}
\put(90,0){\footnotesize $u$}
\put(70,0){\footnotesize $\frac12L$}
\put(40,70){\footnotesize $x_2=vL^{-2}x_1^2$}
\put(88,24){\circle*{2}}
\put(150,140){\circle*{2}}
\put(175,75){\footnotesize $x_2=x_1^2$}
\put(170,155){\footnotesize $x_2=vL^{-1}(2x_1-L)$}
\end{picture}
\caption{
{The extremal trajectory passing through $(u,u^2)$
and $(L,v)$ is tangent to the parabola $x_2=vL^{-2}x_1^2$}
}
\end{center}
\end{figure}
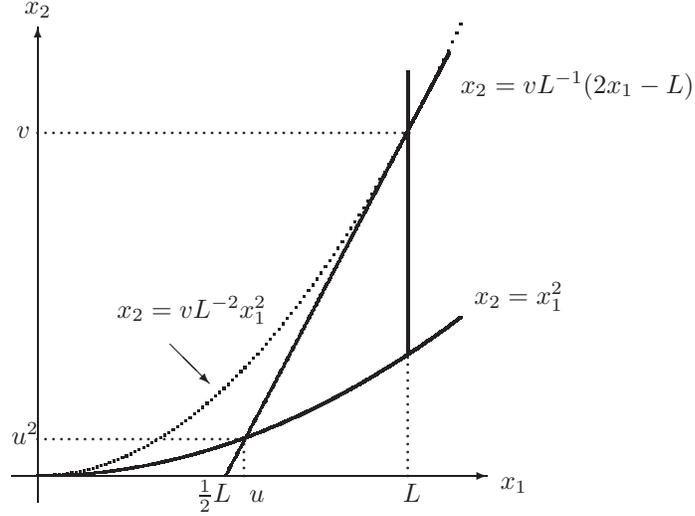

First, we construct an optimizer on $(0,1)$ for the point $(L,v).$
The extremal line passing through this point is $x_2=\frac vL(2x_1-L).$
It intersects the parabolic boundary at the point $(u,u^2)$ with
$$
u=\frac{v-\sqrt{v^2-L^2v}}L=\frac{L\sqrt v}{\sqrt v+\sqrt{v-L^2}}\,.
$$

We need to split the interval $(0,1)$ in half, which splits
the Bellman point $x=(L,v),$ into a pair of points $x^\pm,$
$x=(x^-+x^+)/2.$ We use the homogeneity of the problem in our
construction. We know that the set of test functions for the point
$\tilde x=(\tau x_1,\tau^2x_2)$ is the same as the set of test
functions for the point $x,$ each multiplied by $\tau.$ Therefore,
if $w$ is an optimizer for the point $x,$ then $\tau w$ is an
optimizer for $\tilde x.$ Hence, for the first splitting of $x=(L,v),$
we take the right point $x^+$ not on the continuation of the extremal
line, but on the parabola $x_2=vL^{-2}x_1^2,$ which is tangent to our
extremal line at the point $x.$ Then on the right half-interval
$(\frac12,1)$ we can set the optimizer to be proportional to the
appropriately scaled copy of itself: $w(t)=\beta w(2t-1)$ for
$t\in(\frac12,1).$ What function do we need to take on the left
half-interval? We can split the corresponding Bellman point along
the extremal line in such a manner that the right point
$x^{(\frac14,\frac12)}$ returns to the initial point $x=(L,v)$ and,
therefore, $w(t)=w(4t-1)$ for $t\in(\frac14,\frac12).$ Continuing
in this fashion, we put $w(t)=w(8t-1)$ for $t\in(\frac18,\frac14)$
and so on. We can assume that the first splitting was chosen in such
a way that after $n$ steps the left point $x^-$ lands precisely on
the boundary $x_2=x_1^2,$ and, therefore, on the last interval
$(0,2^{-n})$ we have to set the optimizer $w$ to be constant. Finally,
our optimizing sequence will be given by
\eq[recur]{
w_n(t)=\begin{cases}
\alpha_nL&0<t<2^{-n},
\\
w_n(2^kt-1)&2^{-k}<t<2^{-k+1},\ 1<k<n,
\\
\beta_nw_n(2t-1)&\frac12<t<1.
\end{cases}
}

Let us verify that this recurrent relation defines the
sequence $\{w_n\}$ correctly. To this end, let us introduce a sequence
$\{w_{n,m}\}$ by induction:
$$
w_{n,0}(t)=\begin{cases}
\alpha_nL&0<t<2^{-n},
\\
0&2^{-n}<t<1;
\end{cases}
$$
$$
w_{n,m}(t)=\begin{cases}
\alpha_nL&0<t<2^{-n},
\\
w_{n,m-1}(2^kt-1)&2^{-k}<t<2^{-k+1},\ 1<k<n,
\\
\beta_nw_{n,m-1}(2t-1)&\frac12<t<1.
\end{cases}
$$
We see that $w_{n,m}(t)=w_{n,m-1}(t)$ for all $t$ such that
$w_{n,m-1}(t)\ne0,$ and the measure of the set where $w_{n,m-1}(t)=0$
is $(1-2^{-n})^m,$ i.e. it tends to zero as $m\to\infty.$ Therefore,
$w_{n,m}$ stabilizes almost everywhere as a sequence in $m,$ and its
limit $w_n$ satisfies the recurrent relation~\eqref{recur}.

Now, let us calculate the values of the parameters $\alpha_n$ and $\beta_n.$
We choose them to get $(L,v)$ as a Bellman point of $w_n:$
\begin{align*}
L=\av{w_n}{(0,1)}\!\!
&=2^{-n}\alpha_nL+\big(\frac12-2^{-n}\big)L+\frac12\beta_nL,
\\
v=\av{w_n^2}{(0,1)}\!\!
&=2^{-n}\alpha_n^2L^2+\big(\frac12-2^{-n}\big)v+\frac12\beta_n^2v.
\end{align*}
Solving this system yields
$$
\alpha_n=\sqrt{1+2^{-n+1}}\frac{\sqrt{1+2^{-n+1}}-\sqrt{1-\frac{L^2}v}}
{\frac{L^2}v+2^{-n+1}}\xrightarrow[n\to\infty]{}
\frac v{L^2}\Big(1-\sqrt{1-\frac{L^2}v}\Big)=\frac uL\,.
$$
When solving the quadratic equation for $\alpha_n,$ we chose the minus sign
specifically to get this limit. Choosing the plus sign would produce, instead
of $u,$ the first coordinate of the second intersection point of the
extremal line with the boundary $x_2=x_1^2.$

Now, we need to calculate the maximal function for $w_n,$ which is a simple matter:
$$
Mw_n=\frac{w_n}{\alpha_n}\,.
$$
It is easy to check by induction in $m$ that
$(Mw_{n,m})(t)=\frac{w_{n,m}(t)}{\alpha_n}$ for all $t$ for which
$w_{n,m}(t)\ne0.$ In the limit we obtain the required relation for
$Mw_n.$

Finally, we have
$$
\av{(Mw_n)^2}{}=\frac{\av{w_n^2}{}}{\alpha_n^2}
=\frac{v}{\alpha_n^2}\longrightarrow\frac{vL^2}{u^2}
=\big(\sqrt v+\sqrt{v-L^2}\big)^2=B(L,v;L)\,.
$$

Thus, we have proved the inequality $\Bel(x;L)\ge B(x;L)$ for $x$ on the
line $x_1=L.$ Now, take an arbitrary $x\in\Omega_L$ with $x_1>L/2.$ Let
the extremal line passing through this point intersect the two boundaries
of $\Omega_L$ at the points $(u,u^2)$ and $(L,v)$ and assume that the point
$x$ splits the segment between these two points in proportion
$\alpha\!:\!(1-\alpha).$ Using the main inequality for $\Bel$, linearity
of $B$ on the extremal line, and the just-proved inequality
$\Bel(L,v;L)\ge B(L,v;L),$ we can write down the following chain of estimates:
\begin{align*}
\Bel(x;L)&\ge\alpha\Bel(L,v;L)+(1-\alpha)\Bel(u,u^2;L)
\\
&=\alpha\Bel(L,v;L)+(1-\alpha)L^2
\\
&\ge\alpha B(L,v;L)+(1-\alpha)L^2
\\
&=\alpha B(L,v;L)+(1-\alpha)B(u,u^2;L)=B(x;L)\,.
\end{align*}

We use the same trick to prove inequality $\Bel(x;L)\ge B(x;L)$ for
$x_1\le L/2,$ except now, instead of the vertical extremal line, we use
a nearby line with a large slope. Take a number $\xi$ close to $x_1,$
$\xi<x_1,$ and take the line passing through $x$ and $(\xi,0).$ Let
$(u,u^2)$ and $(L,v)$ be the points where this line intersects the two
boundaries of $\Omega_L.$ Then
$$
v=\frac{L-\xi}{x_1-\xi}\xrightarrow[\xi\to x_1]{}\infty\,,
\qquad\qquad
u=\frac{2\xi\sqrt{x_2}}{\sqrt{x_2}+\sqrt{x_2-4\xi(x_1-\xi)}}
\xrightarrow[\xi\to x_1]{}x_1\,.
$$
The concavity of $\Bel$ implies that
$$
\Bel(x)\ge\frac{x_1-u}{L-u}\Bel(L,v)+\frac{L-x_1}{L-u}\Bel(u,u^2)
\ge\frac{x_1-u}{L-u}\big(\sqrt v+\sqrt{v-L^2}\big)^2+\frac{L-x_1}{L-u}L^2.
$$
The limit of the second term in the last expression is $L^2.$ To calculate
the limit of the first term is a bit of work. First of all, note that
$$
\big(\sqrt v+\sqrt{v-L^2}\,\big)^2
=v\Big(1+\sqrt{1-\frac{L^2}v}\;\Big)^2,
$$
i.e. that term can be rewritten in the form
$$
\frac{x_1-u}{L-u}\,v\,\Big(1+\sqrt{1-\frac{L^2}v}\;\Big)^2
=\frac{x_1-u}{x_1-\xi}\,\cdot\,\frac{L-\xi}{L-u}\,x_2\,
\Big(1+\sqrt{1-\frac{L^2}v}\;\Big)^2.
$$
The limit of the expression in parentheses is $2,$ the second fraction
tends to $1,$ and for the first fraction we have
$$
\frac{x_1-u}{x_1-\xi}=1-\frac{u-\xi}{x_1-\xi}=1-\frac{u^2}{x_2}
\xrightarrow[\xi\to x_1]{}1-\frac{x_1^2}{x_2}\,.
$$
In the end, we have
$$
\Bel(x;L)\ge\big(1-\frac{x_1^2}{x_2}\big)\cdot x_2\cdot4+L^2
=4x_2-4x_1^2+L^2=B(x;L)\,.
$$
Thus, we have proved the following lemma.
\begin{lemma}
$$
\Bel(x;L)\ge B(x;L).
$$
\end{lemma}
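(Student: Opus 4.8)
The plan is to establish the reverse inequality $\Bel(x;L)\ge B(x;L)$ by constructing, for each point $x\in\Omega_L$, an optimizing sequence of test functions $\{w_n\}$ for which $\av{(Mw_n)^2}{}$ converges to $B(x;L)$. Combined with Corollary~\ref{aboveMF} this gives the identification of $\Bel$ with $B$.

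First I would handle points on the right boundary $x_1=L$. Fix $x=(L,v)$ and let $(u,u^2)$ be the point where the extremal line through $x$ meets the lower parabola, so that $u=L\sqrt v/(\sqrt v+\sqrt{v-L^2})$. The idea is to build $w_n$ self-similarly on $(0,1)$: on the right half $(\tfrac12,1)$ one places a suitably rescaled copy of $w_n$ --- legitimate because of the homogeneity of the problem, reading the split point off the parabola $x_2=vL^{-2}x_1^2$ tangent to the extremal line at $x$ --- while on the left half one keeps splitting along the extremal line, the right sub-pieces returning to $x$ itself, until after $n$ dyadic levels the leftmost interval $(0,2^{-n})$ lands on the lower parabola, where the test function must be constant. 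This produces the recursion~\eqref{recur}. I would first check that~\eqref{recur} genuinely defines a function, by exhibiting the auxiliary doubly-indexed family $w_{n,m}$ that stabilizes almost everywhere in $m$; then solve the two linear constraints $\av{w_n}{}=L$ and $\av{w_n^2}{}=v$ for $\alpha_n,\beta_n$, choosing the root of the resulting quadratic that forces $\alpha_n\to u/L$. An induction shows $Mw_n=w_n/\alpha_n$, so $\av{(Mw_n)^2}{}=v/\alpha_n^2\to vL^2/u^2=(\sqrt v+\sqrt{v-L^2})^2=B(L,v;L)$.

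Next, for an arbitrary $x\in\Omega_L$ with $x_1>L/2$, I would pass the extremal line through $x$ until it meets the two boundaries at $(u,u^2)$ and $(L,v)$; since $x$ divides that segment in some ratio $\alpha:(1-\alpha)$, the main inequality~\eqref{mainMF} in its concavity form inside $\Omega_L$, together with the boundary value $\Bel(u,u^2;L)=L^2$ from~\eqref{bcD}, the just-proved bound $\Bel(L,v;L)\ge B(L,v;L)$, and the linearity of $B$ along the extremal line, gives $\Bel(x;L)\ge\alpha B(L,v;L)+(1-\alpha)L^2=B(x;L)$. For $x_1\le L/2$ the extremal trajectory is vertical, so I cannot split along it directly; instead I would take a nearby steep line through a point $(\xi,0)$ with $\xi<x_1$ close to $x_1$, let it meet the boundaries at $(u,u^2)$ and $(L,v)$, apply concavity of $\Bel$, and let $\xi\to x_1$. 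Then $v\to\infty$, $u\to x_1$, and after rewriting $(\sqrt v+\sqrt{v-L^2})^2=v(1+\sqrt{1-L^2/v})^2$ and using $(x_1-u)/(x_1-\xi)=1-u^2/x_2\to 1-x_1^2/x_2$, the limit yields $\Bel(x;L)\ge 4x_2-4x_1^2+L^2=B(x;L)$.

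The main obstacle is the construction and bookkeeping of the optimizing sequence on $x_1=L$: one must verify that the self-similar recursion actually defines an admissible weight, pick the correct branch when solving for $\alpha_n$, and confirm that the maximal function of $w_n$ is exactly $w_n/\alpha_n$ --- in particular that no dyadic ancestor of $(0,1)$ pushes the average above $L$, which is why the external parameter $L$ was introduced and why the constant left-hand piece is set to $\alpha_n L$ rather than anything larger. The two limiting arguments for general $x$ are then comparatively routine, the case $x_1\le L/2$ needing only the indicated asymptotic expansion.
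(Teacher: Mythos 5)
Your proposal follows the paper's proof essentially step for step: the self-similar optimizing sequence on $x_1=L$ defined via the recursion and the stabilizing auxiliary family $w_{n,m}$, the choice of branch giving $\alpha_n\to u/L$, the identity $Mw_n=w_n/\alpha_n$, the extremal-line concavity argument for $x_1>L/2$, and the $\xi\to x_1$ limiting argument with the indicated asymptotics for $x_1\le L/2$. This is the same approach and is correct.
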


Taken together, this lemma and Corollary~\ref{aboveMF} prove the following
theorem:
\begin{theorem}
$$
\Bel(x;L)=
\begin{cases}
\qquad4(x_2-x_1^2)+L^2,&0<x_1\le\frac L2,\ x_2\ge x_1^2,
\\
\big(\sqrt{x_2}+\sqrt{x_2-L(2x_1-L)}\,\big)^2,
&\frac L2\le x_1\le L,\ x_2\ge x_1^2.\rule{0pt}{18pt}
\end{cases}
$$
\end{theorem}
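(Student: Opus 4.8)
The plan is to establish the two matching inequalities $\Bel(x;L)\le B(x;L)$ and $\Bel(x;L)\ge B(x;L)$, where $B$ denotes the candidate~\eqref{BFMF}; the preceding lemmas already package most of the machinery, so the proof is mainly a matter of assembling them.

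For the upper bound I would appeal to the Bellman-induction lemma, which reduces everything to checking that $B$ is continuous, obeys the boundary condition~\eqref{bcD}, and satisfies the main inequality~\eqref{mainMF}. The boundary condition is read off the formula. The only real work is~\eqref{mainMF}, and here the subtlety is that the external parameter slides, $L^+=\max\{x_1^+,L\}$, so concavity of $B(\cdot\,;L)$ on $\Omega_L$ alone does not suffice. The device is to pass to the extended function $\tilde B(x;L)$ equal to $B(x;L)$ for $x_1\le L$ and to $B(x;x_1)$ for $x_1> L$, defined on all of $\{x_1>0,\ x_2\ge x_1^2\}$; then~\eqref{mainMF} is exactly the statement that $\tilde B$ is concave along the segment $[x^-,x^+]$. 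One then verifies that $\tilde B$ is $C^1$ across the seams $x_1=L/2$ and $x_1=L$ (by matching one-sided first derivatives) and concave on each of the three pieces --- on $\Omega_L$ the Hessian is degenerate by construction so only the sign of a single second derivative is at issue, while on the piece $x_1>L$ one computes the full $2\times 2$ Hessian and checks that its determinant is positive and that $\tilde B_{x_1x_1}<0$. Piecewise concavity plus the $C^1$-gluing gives global concavity of $\tilde B$, hence~\eqref{mainMF}, hence $\Bel\le B$.

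For the reverse inequality I would construct, for each point, an \emph{optimizing sequence} of test functions (no single maximizer exists). I start on $(0,1)$ at the point $(L,v)$ on the right edge of $\Omega_L$; the extremal line through it meets the lower parabola at $(u,u^2)$. Exploiting the $\tau$-homogeneity $\Bel(\tau x_1,\tau^2 x_2;\tau L)=\tau^2\Bel(x;L)$, I make the construction self-similar: split $(0,1)$ in half, place on the right half a rescaled copy of the whole optimizer, and on the left half split again along the extremal line so that the right descendant returns to $(L,v)$; iterating produces a function that on the dyadic ring $2^{-k}<t<2^{-k+1}$ is a shifted-and-scaled copy of itself and is the constant $\alpha_n L$ on the last ring $(0,2^{-n})$. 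One checks the recursion~\eqref{recur} genuinely defines $w_n$ via the auxiliary family $w_{n,m}$, whose zero-set has measure $(1-2^{-n})^m\to 0$. The scalars $\alpha_n,\beta_n$ are then fixed by requiring $(L,v)$ to be the Bellman point of $w_n$; solving the resulting $2\times 2$ system and selecting the root forced by the geometry gives $\alpha_n\to u/L$. The crucial identity is $Mw_n=w_n/\alpha_n$ (again proved by induction on $m$ for $w_{n,m}$), whence $\av{(Mw_n)^2}{}=v/\alpha_n^2\to vL^2/u^2=B(L,v;L)$. Thus $\Bel(L,v;L)\ge B(L,v;L)$ for all admissible $v$.

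It remains to push this to the whole domain. For $x$ with $x_1>L/2$, $x$ lies on an extremal segment joining $(u,u^2)$ to $(L,v)$; writing $x$ as a convex combination of the endpoints and using the main inequality for $\Bel$, the bound just proved at $(L,v)$, the trivial value $\Bel(u,u^2;L)=L^2$, and linearity of $B$ on that segment, one gets $\Bel(x;L)\ge B(x;L)$. For $x_1\le L/2$ the extremal direction is vertical, so instead I take the line through $x$ and a point $(\xi,0)$ with $\xi<x_1$, let $\xi\to x_1$ (so its intersection points satisfy $u\to x_1$ and $v\to\infty$), apply concavity of $\Bel$, and evaluate the limit: rewriting the dominant term as $\frac{x_1-u}{x_1-\xi}\cdot\frac{L-\xi}{L-u}\,x_2\,(1+\sqrt{1-L^2/v})^2$, where $\frac{x_1-u}{x_1-\xi}=1-u^2/x_2\to 1-x_1^2/x_2$, the middle factor tends to $1$, and the parenthesis tends to $2$, yields $\Bel(x;L)\ge 4x_2-4x_1^2+L^2=B(x;L)$. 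Combining both inequalities gives the theorem. I expect the main obstacle to be the optimizing-sequence construction --- proving the recursion well-posed, identifying the correct root so that $\alpha_n\to u/L$ rather than the other intersection abscissa, establishing $Mw_n=w_n/\alpha_n$ exactly, and handling the fussy limit passage in the region $x_1\le L/2$.
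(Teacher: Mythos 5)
Your proposal follows the paper's proof essentially verbatim: the upper bound via the Bellman induction lemma applied to the extended $C^1$, concave function $\tilde B$, and the lower bound via the self-similar optimizing sequence $w_n$ (with the identity $Mw_n=w_n/\alpha_n$ and $\alpha_n\to u/L$) on the right edge, then propagated to the interior along extremal lines for $x_1>L/2$ and by the oblique-line limit $\xi\to x_1$ for $x_1\le L/2$. The argument is correct and matches the paper's approach throughout.
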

\bigskip

The Bellman setup of the problem discussed in this section was first stated in~\cite{NaTr},
the Bellman function above was found in~\cite{Me} without solving the Monge--Amp\`ere equation
and without using the Bellman function method at all. The consideration presented here appears
in~\cite{SlStVa} (an initial version in~\cite{SlSt}).
\bigskip

{\bf Homework assignments:}
{\small\sl
\begin{enumerate}
\item
{\color{blue} Show that this theorem implies that
$$
\|Mw\|_{L^2(\R)}\le 2\|w\|_{L^2(\R)}.
$$
}
\item
{\color{blue} Follow the same steps to find the Bellman function for the dyadic
maximal operator on $L^p,$ for $p>1.$}
\end{enumerate}
}

\end{document}